\newtheorem{observation}{Observation}
\def\qed{\hfill$^{\fbox{\footnotesize{}}}$}
\begin{document}

\title{To Prove Four Color Theorem}

\author{Weiya Yue\inst{1}, Weiwei Cao\inst{2}}
\institute{$^1$ Google Inc., Mountain View, CA 94043\\
$^2$ Graduate School
of Chinese Academy of Sciences, Beijing China 100049\\
\email{weiyayue@hotmail.com}}

\maketitle

\vspace{-5mm}
\section{Abstract}
In this paper, we show that a planar graph $G$ has a color assignment
using $\leq 4$ colors. I.e., every planar graph is $4$-colorable.


\section{Introduction}
It is known that four color theorem is one special case of Hadwiger
conjecture~\cite{HH1943} when $k=5$. I.e., if a graph has its
chromatic number $5$, then there is one $K_5$ minor in it. And the
case when $k=4$ has been proved, i.e. a chromatic number $4$ graph
has one $K_4$ minor~\cite{Dirac1952}.

The four color theorem has been proved assisted by computer for the
first time in 1976 by Kenneth Appel and Wolfgang Haken. A simpler
proof using the same idea and also relied on computer was given in
1997 by Robertson, Sanders, Seymour, and Thomas. Additionally in
2005, the theorem was proven by Georges Gonthier with general
purpose theorem proving software which is also relied on computer.
All these proofs have one thing in common that they are all
complicated computer-assisted proofs which render it unreadable and
uncheckable by hand. {\it None of such proofs is a mathematical
proof.}

In this paper, we will prove that a planar graph $G$ has a color
assignment using $\leq 4$ colors in which $G$'s perimeter is
assigned $\leq 3$ colors. Hence we prove that every planar graph is
$4$-colorable. Moreover, we claim that by using results
of~\cite{WeiyaNote1,WeiyaNote2,WeiyaNote3}, this proof can be
generalized to prove Hadwiger Conjecture.

In Section~\ref{sec-Terminology}, necessary terminologies and
definitions are introduced. In Section~\ref{sec-constraints}, some
results are proved prepared for later use in proof of four color
theorem.

\section{Terminology Definition and Preliminary Results}\label{sec-Terminology}

In this section, Conventional graph theory terminology applied. In
Subsection~\ref{sec-perimeter-trace}, {\em Perimeter Trace} of a
planar graph and {\em cluster} are defined and some their properties
are introduced. In Subsection~\ref{sec-color-ex}, {\em color
collections} is defined and analyzed.

\begin{definition}\label{color-assignment}
To graph $G(V,E)$, one color assignment can be treated as a group of
partitions of $V$, in which one partition is an independent set, and
every partition is assigned with one different color.
\end{definition}

In this paper, we often use $cl$ to denote one color assignment and
also use $cl$ to represent colors used in $cl$. And for convenience,
we use integers to represent colors. Then we can say there is one
color assignment or a set of colors $cl=\{1,2,...,l\}, |cl|=l$. In
$cl$, a color used on vertex $v$ is represented by $color_{cl}(v)$,
when there is confusion, also use $color(v)$ directly.

The terminologies below are used in this paper.
Given a graph $G=(V,E)$, a subgraph $G_s=(V_s,E_s)$ of $G$, vertex
$v\in V$, and a set of vertices $W$: $G_s'(V_s',E_s')=G_s\cup v$
means that in $G_s'(V_s',E_s')$, $V_s'=V_s\cup v$ and $E_s'=E_s\cup
\{edges\ from\ v\ to\ V_s\ in\ G\}$. $G_s'(V_s',E_s')=G_s\cup W$
means that in $G_s'(V_s',E_s')$, $V_s'=V_s\cup W$ and $E_s'=E_s\cup
\{edges\ from\ W\ to\ W\cup V_s\ in\ G\}$.
\subsection{Perimeter Trace}\label{sec-perimeter-trace}

\begin{figure}
\setlength{\unitlength}{1mm}
\begin{pspicture}(1,0)(10,3)
\pscircle[linecolor=gray](1,1.5){0.2}
\pscircle[linecolor=gray](1.75,2.2){0.2}
\pscircle[linecolor=gray](2.5,1.8){0.2}
\pscircle[linecolor=gray](3.25,2.2){0.2}
\pscircle[linecolor=gray](4.0,1.5){0.2}
\pscircle[linecolor=gray](3.25,0.8){0.2}
\pscircle[linecolor=gray](2.5,1.2){0.2}
\pscircle[linecolor=gray](1.75,0.8){0.2}
\rput(1,1.5){$v_1$} \rput(1.75,2.2){$v_2$} \rput(2.5,1.8){$v_3$}
\rput(3.25,2.2){$v_4$} \rput(4.0,1.5){$v_5$} \rput(3.25,0.8){$v_6$}
\rput(2.5,1.2){$v_7$} \rput(1.75,0.8){$v_8$}
\psline(1.1,1.6)(1.6,2.05) \psline(1.85,2.1)(2.4,1.9)
\psline(2.5,1.6)(2.5,1.4)
\psline(2.6,1.9)(3.15,2.1)\psline(3.35,2.1)(3.9,1.6)
\psline(3.9,1.4)(3.35,0.9)\psline(3.15,0.9)(2.6,1.1)\psline(2.4,1.1)(1.85,0.9)\psline(1.6,0.9)(1.15,1.35)
\pscircle*[linecolor=gray](1.75,1.5){0.1}\pscircle*[linecolor=gray](3.25,1.5){0.1}
\psline(1.75,1.5)(1.2,1.5)\psline(1.75,1.5)(1.75,2.0)\psline(1.75,1.5)(2.5,1.8)\psline(1.75,1.5)(2.5,1.2)\psline(1.75,1.5)(1.75,1.0)
\psline(3.25,1.5)(2.6,1.8)\psline(3.25,1.5)(3.25,2.0)\psline(3.25,1.5)(3.9,1.5)\psline(3.25,1.5)(3.25,1.0)\psline(3.25,1.5)(2.5,1.2)
\rput(2.5,0){$(a)$}
\pscircle[linecolor=green](1,1.5){0.2}
\pscircle[linecolor=blue](1.75,2.2){0.2}
\pscircle[linecolor=green](2.5,1.8){0.2}
\pscircle[linecolor=blue](3.25,2.2){0.2}
\pscircle[linecolor=green](4.0,1.5){0.2}
\pscircle[linecolor=blue](3.25,0.8){0.2}
\pscircle[linecolor=orange](2.5,1.2){0.2}
\pscircle[linecolor=blue](1.75,0.8){0.2}
\rput(1,1.5){$v_1$} \rput(1.75,2.2){$v_2$} \rput(2.5,1.8){$v_3$}
\rput(3.25,2.2){$v_4$} \rput(4.0,1.5){$v_5$} \rput(3.25,0.8){$v_6$}
\rput(2.5,1.2){$v_7$} \rput(1.75,0.8){$v_8$}
\psline(1.1,1.6)(1.6,2.05) \psline(1.85,2.1)(2.4,1.9)
\psline(2.5,1.6)(2.5,1.4)
\psline(2.6,1.9)(3.15,2.1)\psline(3.35,2.1)(3.9,1.6)
\psline(3.9,1.4)(3.35,0.9)\psline(3.15,0.9)(2.6,1.1)\psline(2.4,1.1)(1.85,0.9)\psline(1.6,0.9)(1.15,1.35)
\pscircle*[fillcolor=red,linecolor=red](1.75,1.5){0.1}\pscircle*[linecolor=red](3.25,1.5){0.1}
\psline(1.75,1.5)(1.2,1.5)\psline(1.75,1.5)(1.75,2.0)\psline(1.75,1.5)(2.5,1.8)\psline(1.75,1.5)(2.5,1.2)\psline(1.75,1.5)(1.75,1.0)
\psline(3.25,1.5)(2.6,1.8)\psline(3.25,1.5)(3.25,2.0)\psline(3.25,1.5)(3.9,1.5)\psline(3.25,1.5)(3.25,1.0)\psline(3.25,1.5)(2.5,1.2)
\rput(2.5,0){$(a)$}
\pscircle[linecolor=gray](5.5,1.5){0.2}
\pscircle[linecolor=gray](6.25,2.2){0.2}
\pscircle[linecolor=gray](7.0,1.5){0.2}
\pscircle[linecolor=gray](7.75,2.2){0.2}
\pscircle[linecolor=gray](8.5,1.5){0.2}
\pscircle[linecolor=gray](7.75,0.8){0.2}
\pscircle[linecolor=gray](6.25,0.8){0.2}
\rput(5.5,1.5){$v_1$} \rput(6.25,2.2){$v_2$} \rput(7.0,1.5){$v_3$}
\rput(7.75,2.2){$v_4$} \rput(8.5,1.5){$v_5$} \rput(7.75,0.8){$v_6$}
\rput(6.25,0.8){$v_7$}
\psline(5.6,1.6)(6.1,2.05) \psline(6.35,2.1)(6.85,1.6)
\psline(7.15,1.6)(7.65,2.1)\psline(7.85,2.1)(8.4,1.6)
\psline(8.4,1.4)(7.85,0.9)\psline(7.65,0.9)(7.15,1.4)
\psline(6.85,1.4)(6.35,0.9)\psline(6.1,0.9)(5.65,1.35)
\pscircle*[linecolor=gray](6.25,1.5){0.1}\pscircle*[linecolor=gray](7.75,1.5){0.1}
\psline(6.25,1.5)(5.7,1.5)\psline(6.25,1.5)(6.25,2.0)\psline(6.25,1.5)(7.0,1.5)\psline(6.25,1.5)(6.25,1.0)
\psline(7.75,1.5)(7.2,1.5)\psline(7.75,1.5)(7.75,2.0)\psline(7.75,1.5)(8.4,1.5)\psline(7.75,1.5)(7.75,1.0)
\rput(7,0){$(b)$}
\pscircle[linecolor=gray](10,1.5){0.2}
\pscircle[linecolor=gray](10.75,2.2){0.2}
\pscircle[linecolor=gray](12.25,2.2){0.2}
\pscircle[linecolor=gray](10.75,1.5){0.2}
\pscircle[linecolor=gray](11.5,1.5){0.2}
\pscircle[linecolor=gray](12.25,1.5){0.2}
\pscircle[linecolor=gray](13.0,1.5){0.2}
\pscircle[linecolor=gray](12.25,0.8){0.2}
\pscircle[linecolor=gray](10.75,0.8){0.2}
\rput(10,1.5){$v_1$} \rput(10.75,2.2){$v_2$}
\rput(10.75,1.5){$v_3$}\rput(11.5,1.5){$v_4$}
\rput(12.25,1.5){$v_5$}
\rput(12.25,2.2){$v_6$}\rput(13.0,1.5){$v_7$}
\rput(12.25,0.8){$v_8$} \rput(10.75,0.8){$v_9$}
\psline(10.1,1.6)(10.6,2.05) \psline(12.35,2.1)(12.9,1.6)
\psline(12.9,1.4)(12.35,0.9) \psline(10.6,0.9)(10.15,1.35)
\psline(10.75,1.65)(10.75,2.0) \psline(10.75,1.35)(10.75,1.0)
\psline(12.25,1.65)(12.25,2.0)\psline(12.25,1.35)(12.25,1.0)
\psline(10.1,1.5)(10.65,1.5) \psline(10.85,1.5)(11.4,1.5)
\psline(11.6,1.5)(12.15,1.5) \psline(12.35,1.5)(12.9,1.5)
\rput(11.5,0){$(c)$}
 \end{pspicture}
\caption{Examples of Perimeter Trace}\label{fig-0}
\end{figure}
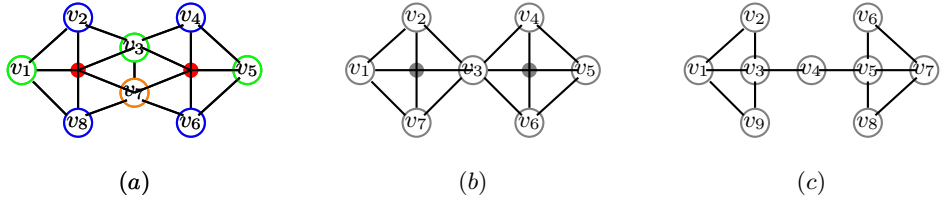

\begin{definition}\label{perimeter-trace}
In a connected graph $G(V,E)$, define its perimeter trace as a
walking on a series of vertices $\{v_1,v_2,...,v_x,v_1\}$, in which
if there are $v_i,v_j,v_k,v_l, i<j<k<l$ then every path between
$v_i,v_k$ intersects with every path between $v_j,v_l$ by assuming
there is edge for every pair of $v_y,v_{(y+1)mod\ (x+1)}$.
\end{definition}

The beginning and ending of a perimeter trace are considered the
same vertex. For convenience, we say two vertices appear
continuously if in a perimeter trace one follow another one without
separation by other vertices. In a planar graph, if we trace its
perimeter, we can get a perimeter trace. For example, in
Figure~\ref{fig-0}.$a$, $\{v_1,v_2,v_3,v_4,v_5,v_6,v_7,v_8,v_1\}$ is
a perimeter trace. When the planar graph is $1$-connected, as shown
in Figure~\ref{fig-0}.$b$, $\{v_1,v_2,v_3,v_4,v_5,v_6,v_3,v_7,v_1\}$
is a perimeter trace where $v_3$ appears twice. In
Figure~\ref{fig-0}.$c$,
$\{v_1,v_2,v_3,v_4,v_5,v_6,v_7,v_8,v_5,v_4,v_3,v_9,v_1\}$ is a
perimeter trace. It is worthy to notice that in a perimeter trace,
one vertex may appear more than one time. 


\begin{observation}\label{perimeter-trace-more-than-1-time}
In a perimeter trace $s$ of graph $G(V,E)$, if one vertex appears
more than $1$ time noncontinuously, the vertex is a cut vertex.
\end{observation}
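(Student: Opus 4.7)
The plan is to argue directly from Definition~\ref{perimeter-trace}. Suppose vertex $v$ appears at two positions, say $p_1 < p_2$, in the perimeter trace $s$, and these two occurrences are non-continuous. Because the trace is cyclic and the two occurrences are non-continuous, there is at least one vertex strictly between position $p_1$ and position $p_2$, and also at least one vertex strictly between position $p_2$ and (wrapping around) position $p_1$. Call these two non-empty blocks of vertices $A$ (the vertices strictly between $p_1$ and $p_2$) and $B$ (the vertices strictly between $p_2$ and $p_1$ going the other way around).

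The goal is then to show that removing $v$ disconnects $A$ from $B$, which proves $v$ is a cut vertex. To do this, pick any $a \in A$ and any $b \in B$, appearing at some positions $q_a$ and $q_b$ of the trace. Up to cyclic rotation of indices, we may label the four relevant positions so that $p_1 < q_a < p_2 < q_b$. Now I would invoke the intersection condition from Definition~\ref{perimeter-trace}: every path between the vertices at positions $p_1$ and $p_2$ must intersect every path between the vertices at positions $q_a$ and $q_b$.

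The key observation is that the vertices at positions $p_1$ and $p_2$ are the \emph{same} vertex $v$, so the collection of paths between them includes the trivial path consisting of the single vertex $v$. Applying the intersection condition with this trivial path then forces every path from $a$ to $b$ in $G$ to pass through $v$. Since $a$ and $b$ were arbitrary representatives of the non-empty sets $A$ and $B$, the removal of $v$ disconnects $A$ from $B$, so $v$ is a cut vertex.

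The main obstacle, and the only subtlety, is the interpretation of the intersection clause in Definition~\ref{perimeter-trace} when two of the four indexed vertices coincide. Once one accepts that a path from $v$ to itself may be taken as the singleton $\{v\}$ (which is the natural reading, and is also the only reading that makes the definition consistent with the examples in Figure~\ref{fig-0}$b$ and Figure~\ref{fig-0}$c$), the conclusion is immediate. If a stricter definition of path is preferred, one would instead have to argue by taking arbitrarily short $v$--$v$ walks through neighbors of $v$ and using planarity to show that an $a$--$b$ path avoiding $v$ would violate the intersection property; but the singleton-path reading makes this detour unnecessary.
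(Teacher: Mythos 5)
The paper states this Observation with no proof at all, so your argument has to be judged on its own rather than against an intended one. Judged that way it is essentially correct. You rightly note that two noncontinuous occurrences of $v$ leave both cyclic arcs between them nonempty, you instantiate the condition of Definition~\ref{perimeter-trace} at positions $p_1<q_a<p_2<q_b$, and you extract the separation of $a$ from $b$ in $G\setminus v$ from the length-zero $v$--$v$ path. You have also put your finger on the only genuine subtlety: whether the definition's clause may be instantiated with $v_i$ and $v_k$ equal as vertices. Since the definition quantifies over positions in the trace (a vertex may occupy several of them), and since this degenerate reading is the only one under which the paper's own $1$-connected examples following Definition~\ref{perimeter-trace} (where the repeated vertex is visibly a cut vertex) are consistent with the definition, your reading is the right one. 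Two small things are worth making explicit. First, the paths in Definition~\ref{perimeter-trace} live in $G$ augmented by the assumed edges $e(v_y,v_{(y+1)\bmod (x+1)})$, so you need the one-line remark that every path of $G$ is in particular a path of that augmented graph before concluding that every $a$--$b$ path \emph{of $G$} meets $v$. Second, when choosing $a\in A$ and $b\in B$ you should choose occurrences of vertices distinct from $v$ (such choices exist, since otherwise the two occurrences of $v$ would in fact be continuous), as otherwise the separation statement is vacuous. Neither point is a gap in substance.
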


A subset of a perimeter trace $S$ is called a
$cluster$ of $S$.
For example, in Figure~\ref{fig-0}.$a$,
$\{v_1,v_2,v_3,v_4,v_5,v_6,v_7,v_8,v_1\}$ is a perimeter trace.
Because one cluster can be chose roundly, $``v_7,v_8,v_1"$ is one
cluster. $\{``v_1,v_2,v_3", ``v_4,v_5",``v_6,v_7,v_8"\}$ and
$\{``v_1,v_2,v_3", ``v_3", ``v_3",``v_3",
``v_4,v_5",``v_6,v_7,v_8"\}$ are its two sets of clusters.

\begin{proposition}\label{planar-perimeter}
If connected graph $G(V,E)$ is planar, then a walking generates a
perimeter trace if and only if the walking is on a perimeter. If $S$
is $G$'s perimeter trace and $u\in S$ which is not a cut vertex,
after deleting $u$ from $G$, $\{S\setminus u\}\cup \{N(u)\setminus
S\}$ is a perimeter trace.
\end{proposition}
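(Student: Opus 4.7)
The plan is to prove the two halves of the proposition separately, relying throughout on a fixed plane embedding of $G$ and on the non-crossing characterization of perimeter traces given in Definition~\ref{perimeter-trace}.

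For the biconditional, I would first take ``perimeter'' to mean the boundary walk of the outer (unbounded) face of a chosen plane embedding. For the ``perimeter $\Rightarrow$ trace'' direction, fix $v_i,v_j,v_k,v_l$ with $i<j<k<l$ in the cyclic order of the walk. The perimeter arc from $v_i$ to $v_k$ through $v_j$, closed by any $v_i$-$v_k$ path in $G$, bounds a closed topological disk that contains $v_j$ on its boundary and separates $v_l$ from $v_j$ in the plane; a standard Jordan-curve argument then forces any $v_j$-$v_l$ path to share a vertex with the given $v_i$-$v_k$ path, which is exactly the crossing condition. For the ``trace $\Rightarrow$ perimeter'' direction I would argue by contradiction: if the walk $\{v_1,\dots,v_x,v_1\}$ deviates from the outer-face boundary, the embedding yields four indices $i<j<k<l$ together with two internally vertex-disjoint paths realizing $v_i$-$v_k$ and $v_j$-$v_l$ through faces on opposite sides of the walk, contradicting Definition~\ref{perimeter-trace}.

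The second claim, concerning deletion of a non-cut vertex $u\in S$, I would reduce to the first. Because $u$ is not a cut vertex, $G-u$ is still connected, and in the embedding the outer face of $G$ merges with every face of $G$ incident to $u$ into a single outer face of $G-u$. Walking along this merged face produces exactly the sequence obtained from $S$ by removing the occurrence of $u$ and splicing in the neighbors of $u$ not already on $S$, taken in the cyclic order they appear around $u$; this is $\{S\setminus u\}\cup\{N(u)\setminus S\}$. Applying the ``perimeter $\Rightarrow$ trace'' half already established to $G-u$ gives the conclusion.

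The main obstacle I expect is the ``trace $\Rightarrow$ perimeter'' direction, because Definition~\ref{perimeter-trace} is purely combinatorial (non-crossing chords) while ``walking on a perimeter'' is a topological notion tied to an embedding; bridging them cleanly seems to require either invoking a planarity criterion such as MacLane's or a careful face-by-face analysis of how the walk can interact with the embedding. A secondary subtlety, tied to Observation~\ref{perimeter-trace-more-than-1-time}, is that the walk need not be a simple cycle when $G$ has cut vertices, so I must argue that the crossing condition forces every non-continuous repetition on $S$ to occur exactly where the outer face pinches itself at a cut vertex, and likewise that the spliced walk in the second claim inherits such legal repetitions from $S$ rather than creating illegal ones.
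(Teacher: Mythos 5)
Your proposal is already far more substantive than the paper's own argument, which consists of the single sentence ``This follows from known properties of planar graph'' plus a restatement of the second claim; so there is no real ``paper approach'' to compare against. Your Jordan-curve argument for the ``perimeter $\Rightarrow$ trace'' direction is the right idea and can be made rigorous. However, the proposal is not a proof: you yourself flag the ``trace $\Rightarrow$ perimeter'' direction as an unresolved obstacle, and it genuinely is one. Worse, as stated that direction appears to be false or at least ill-posed: by passing to the one-point compactification, the boundary walk of \emph{any} face of a plane embedding (not just the unbounded one) satisfies the non-crossing condition of Definition~\ref{perimeter-trace}, since the closed complement of an open face is a disk on the sphere and interleaved chords in a disk must meet. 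So a walk around an interior face is a perimeter trace under the definition but is not ``on a perimeter'' unless one is allowed to re-choose the embedding. Any honest proof of the biconditional has to confront this, e.g.\ by quantifying over embeddings or by weakening ``perimeter'' to ``boundary of some face.''

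There is also a concrete gap in your treatment of the deletion claim. You assert that walking the merged outer face of $G-u$ produces exactly $\{S\setminus u\}\cup\{N(u)\setminus S\}$, but the faces of $G$ incident to $u$ that merge into the new outer face may have boundary vertices that are neither neighbors of $u$ nor on $S$ (take a face $u,a,b,c,u$ with $b\notin N(u)$). The new outer-face boundary then contains such a $b$, so the displayed set is a proper subsequence of the new perimeter walk, and your reduction to the first half no longer applies directly; you would need a separate argument that a subsequence of a perimeter trace obtained this way still satisfies Definition~\ref{perimeter-trace} (plausible, since the crossing condition is inherited by subsequences, but it must be said). Finally, your concern about repeated vertices at cut vertices is well placed: the hypothesis only excludes $u$ itself from being a cut vertex, not the vertices of $N(u)\setminus S$, which can become cut vertices of $G-u$ and force legal repetitions in the new trace that your splicing description does not account for.
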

\begin{proof}
This follows from known properties of planar graph. After deleting
$u\in S$ from $G$, the new perimeter trace is the same except $u$ is
replace by $N(u)\setminus S$.

\qed
\end{proof}

\subsection{Series of Clusters}\label{series-of-clusters}

Given a cluster on a perimeter trace, we can create a cycle by connecting cluster vertices
according to the order of perimeter trace, and call the cycle as cluster cycle.

\begin{definition}\label{independent-clusters}
Given two clusters $cs_1,cs_2$, if their cluster cycles have no edge overlapped with each other,
then we call these two clusters independent from each other.
\end{definition}

\begin{definition}\label{series-clusters}
On a perimeter trace, we can define a set of clusters, if all of them are independent from each other,
then can all this a series of clusters.
\end{definition}
In this paper, we only discuss independent clusters.

\begin{lemma}\label{how-series-clusters-overlapped}
Given a perimeter trace, and a series of cluster on the trace, the perimeter trace and all cluster cycles
of the series of clusters consist an outerplanar graph.
\end{lemma}
\begin{proof}
This follows from definition of outerplanar graph.
\qed
\end{proof}
In~\cite{WeiyaNote3}, there are more discussions about outerplanar graph, where a geometric view of
outerplanar graph is given.


\subsection{Color Collections}\label{sec-color-ex}

 Given $m\geq 0$ colors, $n\leq m$, there
are $C_{m}^{n}$ combinations of $n$ colors. Assume we are using
colors $M=\{1,2,...,m\}$.
\begin{definition}\label{color-collection}
If there is one set of colors $L\subseteq M$ with $|L|=l\leq n$, we
say $L$ can be extended to be $n$ colors by adding colors from
$M\setminus L$, and we call those collections as $n$-collection of
$L$ respect to $M$.
\end{definition}
Easy to see, there are $C_{m-l}^{n-l}$ different $n$-collections of
$L$ respect to $M$. If $l=0$, then such collections are called $M$'s
$n$-collections. We say a cluster is colored by a $n$-collection, if
the colors used on the cluster is a subset of a $n$-collection of
$M$. Here we use $cn(\Upsilon_i)$ to represent the $n$-collection
coloring of cluster $\Upsilon_i$.

In this paper below, we will assume $M=\{1,2,3,4\}$ where $m=4$, and
$n=3$ color collections are used. Without confusion, when we say
color collection, it means $3$-collection respect to $M$. Hence
there are $C_4^3=4$ $3$-collections such that
$\{1,2,3\},\{1,2,4\},\{1,3,4\},\{2,3,4\}$.

If $l=2$ with color-set $c=\{1,2\}$, then $c$ can be extended in
$C_{4-2}^{3-2}=2$ ways to be $\{1,2,3\},\{1,2,4\}$. {\it Below if we
are talking about $n-collection$, a color set whose cardinality is
$\leq n$ will be considered as equivalent with all its collections.}
I.e., if we are talking about $3-collection$, color set $\{1,2\}$ is
equivalent with $\{1,2,3\},\{1,2,4\}$. So if we are talking about
the cardinality of a set of $n-collection$, all color-sets are
extended to be $n-collections$. I.e., when talking $3-collection$,
$\{1,2\}$ has its cardinality as $2$.

For example, in Figure~\ref{fig-0}.$a$ we use colors
$M=\{1=red,2=green,3=blue,4=orange\}$ and $3$-collections to color
clusters $\{v_8,v_1,v_2\}$, $\{v_4,v_5,v_6\},\{v_7\}$ on perimeter
trace $\{v_1,v_2,v_3,v_4,v_5,v_6,v_7,v_8,v_1\}$.
$cn(\{v_8,v_1,v_2\})=\{2,3\}=\{\{2,3,1\},\{2,3,4\}$, means cluster
$\{v_8,v_1,v_2\}$ can be considered being colored by $3$-collection
$\{2,3,1\}$ or $\{2,3,4\}\}$. Also there are
$cn(\{v_4,v_5,v_6\})=\{2,3\}=\{\{2,3,1\},\{2,3,4\}\}$, and
$cn(\{v_7\})=\{4\}=\{\{1,2,4\},\{1,3,4\},\{2,3,4\}\}$.

\begin{definition}\label{collection-consistent-or-not}
Given two sets of collections $C_1,C_2$, if $C_1\setminus
C_2\not=\emptyset$ and $C_2\setminus C_1\not=\emptyset$, then
$C_1,C_2$ are inconsistent; otherwise, they are consistent.
\end{definition}

\section{Solution Space and Its Kernel}\label{solution-space}

\begin{definition}\label{cluster-solution-space}
If we treat each color assignment for a cluster $cs$ as a solution,
then all solutions of the cluster together consist the solution space $ss$ of it.
\end{definition}

\begin{definition}\label{kernel-cluster-solution-space}
Given a cluster, we can add edges to it to get an outerplanar graph, and all color assignments
of this graph is called the kernel solution space $ks$ to the cluster.
\end{definition}

\textit{Given a solution space, name its subset where vertices $W\subseteq cs$ are colored with specific colors
as solution space according $W$}. Next we define the solution space of a series of clusters.

\begin{definition}\label{division-solution-space}
$\{cs_1, cs_2,\ldots, cs_k\}$ are a series of clusters, there
is a solution space for $cs_1$. And for an arbitrary solution of $cs_1$,
there is a solution space for $cs_2$. Iteratively, for each solution of $cs_i$,
there is a solution space of $cs_{i+1}$. This tree structure solution space is
called space of the series of clusters. If every solution space in the solution
tree is a kernel solution, then we call this solution space is complete.
\end{definition}

Easy to see, every node of this tree structure solution
 space is a solution space for a cluster.
For example, if we have two clusters $cs_1, cs_2$. There
 is $\{1,2,3\}, \{1,2,4\}$ for $cs_1$.
And if $cs_1$ is assigned with $\{1,2,3\}$, then $cs_2$
 can be assigned $\{1,2,3\}, \{1,3,4\}$;
Similarly, if $cs_1$ is assigned with $\{1,2,4\}$, then
 $cs_2$ can be assigned with $\{1,2,3\}, \{2,3,4\}$. All these 4 solutions together form the solution space of clusters $cs_1, cs_2$.

From the definition, any cluster solution for $cs_i$ has a
corresponding solution space for $cs_{i+1}$. We have special interests in cluster solution using only certain number of colors, so for convenience, we use $ss_i$ to represent the solution space using $\leq i$ colors.
In this paper, without explicit statement, we are talking about $i=4$ colors.

\begin{lemma}\ref{always-3-colors}
Every kernel solution space has a color assignment using $\leq 3$ colors. In a complete solution space,
each solution node has a color assignment using $\leq 3$ colors. Specifically, if $x$ colors are
\end{lemma}
\begin{proof}
As outerplanar graph can be colored with $\leq 3$ colors, by Definition~\ref{kernel-cluster-solution-space},
every kernel solution space has a solution using $\leq 3$ colors.
By Definition~\ref{series-clusters}, one cluster can have at most "abab"
\qed
\end{proof}

\begin{lemma}\label{kernel-solution-merge}
If $cs=cs_1\bigcup cs_2$, then two kernel solution sets $ks_1, ks_2$ of $cs_1,cs_2$ respectively
can be merged together to be a kernel solution set for $cs$.
\end{lemma}
\begin{proof}
Assume $G_1, G_2$ are the two outerplanar graph corresponding to $ks_1, ks_2$.
The outerplanar graph on $cs$ can be set to be $G=G_1\bigcup G_2$.
Assume after merging, there is a solution $cl$ missed from a kernel solution set for $cs$.
But the partial solution on $cl(G_1)$ and $cl(G_2)$ should both exist in $ks_1, ks_2$ respectively,
which can be merged to be $cl$ for $G$, which is a contradiction.
\qed
\end{proof}

\begin{lemma}\label{kernel-solution-switch}
Given a cluster $cs=\{v_1, v_2,\ldots,v_n\}$ and its kernel solution space $ks$, assume $G=(V,E)$ is $ks$'s corresponding outerplanar graph
and there is no edge $e(v_1, v_n)$. Then in this kernel solution space:
\begin{enumerate}
\item there is solution where $cl(v_1)\not = cl(v_n)$.
\item subset of $ks$ where $cl(v_1)\not = cl(v_n)$ is not empty, and is the kernel solution space for $cs$ whose corresponding outerplanar
      graph is $G'=G\bigcup e(v_1,v_n)$.
\end{enumerate}
\end{lemma}
\begin{proof}
In $G$, as there is no edge $e(v_1, v_n)$, there is a cut-vertex $v\in V$. So for a solution to $G$, if $cl(v_1)=cl(v_n)$,
by doing color projection, we can set $cl(v_1)\not = cl(v_n)$, and easy to see the new color assignment is still a solution
to $G$. By definition of kernel solution space, we can conclude the new color assignment belongs to $ks$.
Then easy to see the subset $ks_{cl(v_1)\not=cl(v_n)}$ of $ks$ where $cl(v_1)\not = cl(v_n)$ is not empty.
$ks_{cl(v_1)\not=cl(v_n)}$ is a subset of solution space of $G'$. Assume there is a solution to $G'$ but not in
$ks_{cl(v_1)\not=cl(v_n)}$. But as $G=G'\setminus e(v_1,v_n)$, this solution $\in ks$. Hence it is also
$\in ks_{cl(v_1)\not=cl(v_n)}$ which is a contradiction.
\qed
\end{proof}

\begin{lemma}\label{outerplanar-complete-solution-set}
In an outerplanar graph, we can define a series of clusters on its outer face, and
those clusters have a complete solution space.
\end{lemma}
\begin{proof}
Assume $G=\{v_1, v_2,\ldots, v_n\}$. If there is no edge $e(v_1,v_n)$, the proof is simpler.
So here we only prove the case where there is edge $e(v_1,v_n)$. Set $G'=G\setminus e(v_1,v_n)$.
Then there is cut vertex $v$ in $G'$, and after deleting $v$ from $G$, there are two graphs $G'_1$ and $G'_2$.
By induction, $G'_1$ and $G'_2$ both have complete solution set. And if a cluster is divided into $G'_1$ and $G'_2$, then by Lemma~\ref{kernel-solution-merge}, they can be merged together to be a complete solution set for this cluster.
By Lemma~\ref{kernel-solution-switch}, we can get the complete solution set to the graph $G'\bigcup e(v_1,v_n)$,
which is $G$.
\qed
\end{proof}

\begin{definition}\label{vertex-mapping}
Assume two graphs $G_1=(V_1,E_1)$ and $G_2=(V_2,E_2)$, and without losing generality $|V_2| \leq |V_1|$,
then we can define a $1:1$ mapping on $V_2$ and subset $V_{G_2}\subseteq V_1$ where $|V_2|=|V_{G_2}$.
If there is a color assignment $cl$ to $G_1$, and $cl(V_{G_2}$ is a color assignment for $G_2$,
then we say $G_1$ and $G_2$ have overlapping on their solutions. I.e., there is overlapping on
solution spaces of $G_1$ and $G_2$.
\end{definition}
Similarly, we can define the overlapping of solution space for clusters as whose solution spaces correspond
to solutions of outerplanar graphs.

\begin{theorem}\label{outerplanar-graph-solution-space-overlapping}
An outerplanar graph has its solution overlapping with another outerplanar graph for an arbitrary $1:1$ mapping.
\end{theorem}
\begin{proof}
Assume there are two graphs $G_1=(V_1,E_1)$ and $G_2=(V_2,E_2)$,
and $|V_2| \leq |V_1|$. For convenience, assume $|V_1|=n_1,|V_2|=n_2$,
$V_1=\{v_1,v_2,\ldots,v_{n_1}\}$, $V_2=\{u_1,u_2,\ldots,v_{n_2}\}$.
Prove this by doing induction on the cardinality of the outerplanar graph $|G_1|+|G_2|$.
When $n_1=n_2=1$, this is obvious.

When $n_1+n_2>2$ and an arbitrary mapping:
\begin{enumerate}
\item there is no edge $e(u_1, u_{n_2})$. If $n_2\leq 2$, the case $n_2=1$ is trivial.
 And if $n_2=2$, then there are cases:
   \begin{enumerate}
   \item There is no edge $e(v_1,v_{n_1})$. Then there is cut vertex $v_i\in V$. Assume there are
   two graphs $G_{11},G_{12}$ by splitting graph $G_1$ on $v_i$. If the mapping happens between
   $G_{11}$ and $G_2$ or $G_{12}$ and $G_2$, then we can do induction on them. Prove the case where
   the mapping happens between $G_{11}$ and $G_2$. By induction, $G_{11}$ has overlapping with $G_2$ on $cl_{11}$.
   Then $cl$ can be merged with color assignment $cl_{12}$ of $G_{12}$ where $cl_{12}(v_i)=cl_{11}(v_i)$ which
   can always achieved by color projection.
   And the mapping happens on $G_{11},G_{12}$ and $G_2$, i.e., $u_1$ is mapped to $v_{u_1}$ in $G_{11}$ and $u_2$ is
   mapped to $v_{u_2}$ in $G_{12}$. On $G_{11}$, we can define cluster $cs=\{v_{u_1},v_i\}$, and define an outerplanar
   graph $G=\{\{q_1,q_2\}, \{e(q_1,q_2)\}\}$ with mapping $M(v_{u_1})=q_1,M(v_i)=q_2$. By induction, $G_{11}$ and $G$
   has overlapping in their solution spaces. Assume the overlapped color assignment is $cl_{11}$ where $cl_{11}(v_{u_1})\not =cl_{11}(v_i)$.
   So easy to see, by color projection, we can get a color assignment $cl_{12}$ for $G_{12}$ where $cl_{11}(v_{u_1})\not =cl_{12}(v_{u_2})$.
   \item\label{n-2-2} There is edge $e(v_1,v_{n_1})$. Set graph $G'_1=G\setminus e(v_1,v_{n_1})$, then there is cut vertex $v_i$.
   Assume by splitting at $v_i$, can get two graphs $G'_{11},G'_{12}$. Without losing generality, we can assume there are
   edges $e(v_1,v_i)$ and $e(v_i,v_{n_1})$. Similar as above, we can define clusters $cs_1=\{v_{u_1},v_i\}$ and
   $cs_2=\{v_i,v_{u_2}\}$ and get color assignment $cl_1,cl_2$ to graphs $G'_{11},G'_{12}$ respectively satisfying
   $cl_1(v_1)\not=cl_1(v_i),cl_1(v_{u_1})\not=cl_1(v_i)$, $cl_2(v_i)\not=cl_2(v_{n_1}),cl_2(v_i)\not=cl_2(v_{u_2})$.
   By color projection, we can claim $cl_1(v_1)\not=cl_2(v_{n_1})$ and hence such a color assignment for $G'_1$ can be used
   by graph $G_1$. There are four subcases.
     \begin{enumerate}
     \item $cl_1(v_1)=cl_1(v_{u_1}), cl_2(v_{u_2})=cl_2(v_{n_1})$. So $cl_1(v_{u_1})\not=cl_2(v_{u_2})$.
     \item $cl_1(v_1)\not=cl_1(v_{u_1}), cl_2(v_{u_2})=cl_2(v_{n_1})$. By doing color projection and with $4$ colors to be used,
           $cl_1(v_{u_1})$ has two choices, so can always choose the one different from $cl_2(v_{u_2})$.
     \item $cl_1(v_1)=cl_1(v_{u_1}), cl_2(v_{u_2})\not=cl_2(v_{n_1})$. Similar as above case.
     \item $cl_1(v_1)\not=cl_1(v_{u_1}), cl_2(v_{u_2})\not=cl_2(v_{n_1})$. Similar as above case.
     \end{enumerate}
   \end{enumerate}
 If $n_2>2$, then there is a cut vertex $u_i$ for $G_2$. And by cutting on $u_i$, we get two graphs $G_{21}$ and $G_{22}$ whose solution space
 are $SS_{21}$ and $SS_{22}$ respectively.
 $G_1$ has corresponding two clusters to $cs_1$ and $cs_2$ according to the mapping and $G_{21},G_{22}$.
 By Lemma~\ref{outerplanar-complete-solution-set}, $G_1$ has a complete solution set to clusters
 $cs_1,cs_2$. Assume $ks_1$ is the kernel solution space in the complete solution set for $cs_1$,
 and $G_{cs_1}$ is the corresponding outerplanar graphs.
 By induction, $ks_1\bigcap ss_{21}\not=\emptyset$. Assume $cl_1=ks_1\bigcap ss_{21}$. Then by
 definition of complete solution set, there is $ks_2$ as the kernel solution space for $cs_2$ according
 to $ks_1$ in this complete solution space. By induction again, there is $ks_2\bigcap ss_{22}\not=\emptyset$, and
 assume the corresponding color assignment of $G_1$ is $cl_{12}$.
 By Lemma~\ref{kernel-solution-merge}, $ss_{21}$ and $ss_{22}$ can be merged together to be a kernel
 solution space $ss_2$ to $G_2$. Easy to see, $ss_2$ has overlapping with $cl_{12}$.
 Hence $G_1,G_2$ has overlapping in their solution space.

\item If there is edge $e(u_1, u_{n_2})$ in $G_2$, set $G'_2=G_2\setminus e(u_1, u_{n_2})$.
 Here we can assume $n_2>2$, as cases $n_2\leq 2$ has been discussed in above scenario. But we still
 need to discuss in two cases $n_2=3$ and $n_2>3$.

 If $n_2=3$, then $G_2=\{u_1,u_2,u_3\}$ is a simple cycle graph.
 \begin{enumerate}
   \item There is no edge $e(v_1,v_{n_1})$. This can be proved similar as case below, and simpler. So we
   skip its proof here.
   \item There is edge $e(v_1,v_{n_1})$. Set graph $G'_1=G\setminus e(v_1,v_{n_1})$.
   Then there is cut vertex $v_i\in V$ in $G'_1$. Assume there are
   two graphs $G'_{11},G'_{12}$ by splitting graph $G'_1$ on $v_i$. If the mapping happens between
   $G'_{11}$ and $G_2$ or $G'_{12}$ and $G_2$, then we can do induction on them. Prove the case where
   the mapping happens between $G'_{11}$ and $G_2$. By induction, $G'_{11}$ has overlapping with $G_2$ on $cl_{11}$.
   Then $cl$ can be merged with color assignment $cl_{12}$ of $G_{12}$ where $cl_{12}(v_i)=cl_{11}(v_i)$ which
   can always achieved by color projection.
   If the mapping happens on $G'_{11},G'_{12}$ and $G_2$, we can assume $u_1$ is mapped to $G'_{11}$ on vertex
   $v_{u_1}$, and $\{u_2,u_3\}$ are mapped to $G'_{12}$ on $v_{u_2},v_{u_3}$. Similar as case~\ref{n-2-2},
   we can assume there are color assignments $cl_{11},cl_{12}$ to
   graphs $G'_{11},G'_{12}$ respectively. And $cl_{11}(v_1)\not=cl_{11}(v_i)$, $cl_{12}(v_i)\not=cl_{12}(v_{n_1})$,
   $cl_{11}(v_i)=cl_{12}(v_i)$, and $cl_{11}(v_1)\not=cl_{12}(v_{n_1})$. Also by defining clusters $cs_1=\{v_1,v_{u_1},v_i\}$
   and $cs_2=\{v_{u_2},v_{u_3},v_{n_1}\}$, and mapped to a simple cycle graph with $3$ vertices, then by induction,
   we can assume $cl_{11}(v_1)\not=cl_{11}(v_{u_1})\not=cl_{11}(v_i)$, and $cl_{12}(v_i)\not=cl_{12}(v_{u_2})\not=cl_{12}(v_{u_3})$.
   Next we need to show color assignments on $v_{u_1},v_{u_2},v_{u_3}$ are different from each other. Without losing generality,
   assume $cl_{11}(v_1)=2,cl_{11}(v_i)=cl_{12}(v_i)=1,cl_{12}(v_{n_1})=3$. Then by color projection, $cl_{11}(v_{u_1})$ can be
   $3$ or $4$. There are several cases here, and we discuss one here as others are much simpler:
   $cl_{11}(v_{u_1})=3$ and $cl_{12}(v_{u_2}),cl_{12}(v_{u_3})$ are $\{3,4\}$. By doing color projection with $3,4$ with $cl_{12}$,
   there is $cl_{12}(v_{n_1})=4$. Then do a projection with $2,3$, $cl_{12}(v_{u_2}),cl_{12}(v_{u_3})$ are $\{2,4\}$ which is
   different from $cl_{11}(v_{u_1})=3$. \textit{Here we can see that allowing $4$ colors on such an outerplanar graph gives extra
   flexibility, and this is the key why this Theorem exists.}
\end{enumerate}
 If $n_2>3$, then assume $u_i$ is the cut vertex of $G'_2$, and there are two graphs $G'_{21}$ and $G'_{22}$
by cutting $G'_2$ at $u_i$. We can define clusters $cs_1,cs_2$ according to the mapping and $G'_{21},G'_{22}$.
Besides that, we define a simple cycle graph $G_{u_1,u_i,u_{n_2}}$ on vertices $\{u_1,u_i,u_{n_2}\}$.
By the mapping, we can define cluster $cs_{u_1,u_i,u_{n_2}}$ in $G_1$. Easy to see we have a series of
clusters now: $cs_{u_1,u_i,u_{n_2}},cs_1,cs_2$. And by Lemma~\ref{outerplanar-complete-solution-set},
$G_1$ has a complete solution set to those clusters. By induction, in the complete solution space,
pick $ks_{u_1,u_i,u_{n_2}}$ for $cs_{u_1,u_i,u_{n_2}}$, and $ks_{u_1,u_i,u_{n_2}}\bigcap ss_{u_1,u_i,u_{n_2}}\not=\emptyset$
where $ss_{u_1,u_i,u_{n_2}}$ is the solution space for graph $G_{u_1,u_i,u_{n_2}}$. Similarly as above case,
we can get $ks_1,ks_2$ for $cs_1,cs_2$ respectively and finally get a $cl$ to $G_1$ which has overlapping
with $ss_{u_1,u_i,u_{n_2}},ss_1,ss_2$ which are solution spaces to graphs $G_{u_1,u_i,u_{n_2}},G'_{21},G'_{22}$
respectively. Hence $cl$ can be a solution to graph $G_2$. So $G_1,G_2$ has overlapping in their solution space.
\end{enumerate}
\qed
\end{proof}

\begin{corollary}\label{kernel-graph}
If a graph $G$ can be decomposed into two outerplanar graphs, then it can be colored with $\leq 4$ colors.
\end{corollary}
\begin{proof}
Straightforward from Theorem~\ref{outerplanar-graph-solution-space-overlapping}.
\qed
\end{proof}
This kind of graph is very important in planar graph, and we call such kind of graph as the \textit{kernel}
of planar graph.

\begin{corollary}\label{kenel-solution-space-overlapping}
Given two arbitrary clusters $cs_1,cs_2$ and an arbitrary mapping, two arbitrary kernel solution spaces $ks_1,ks_2$
belonging to $cs_1,cs_2$ respectively, there is $ks_1\bigcap ks_2\not=\emptyset$.
\end{corollary}
\begin{proof}
By Definition~\ref{kernel-cluster-solution-space}, there are two outerplanar graphs $G_1,G_2$ according $ks_1,ks_2$ respectively.
And $ss_1=ks_2, ss_2=ks_2$ where $ss_1,ss_2$ are solution spaces to $G_1,G_2$ respectively.
With the given mapping, by Theorem~\ref{outerplanar-graph-solution-space-overlapping}, $G_1,G_2$ have overlapping in
their solution space. Hence $ks_1\bigcap ks_2\not=\emptyset$.
\qed
\end{proof}

\section{To Prove Four Color Theorem}\label{sec-to-prove-4ct}

\begin{theorem}\label{strenghed-4ct}
In a planar graph $G(V,E)$, given a perimeter trace and an arbitrary series of clusters on it,
the clusters have a complete solution space using up to $4$ colors.
\end{theorem}
\begin{proof}
Prove this by doing induction on $|G|$.
When $|G|=1$, it is trivial. Actually, if $G$ is a simple cycle graph, then by Lemma~\ref{outerplanar-complete-solution-set},
the conclusion holds.

When $|G|>1$, we can assume the graph is not a simple cycle graph. On $G$'s an arbitrary perimeter trace
$TS=\{v_1,v_2,\ldots,v_k\}$,
and a series of cluster on $TS$, we will show that there is a complete solution space on those clusters.
As $G$ is not a simple cycle graph, we can assume there is a path $P_{v_i,v_j}$ from $v_i$ to $v_j$ where
$\{v_i,v_j\}\subset TS$ and does not cross with $TS$ on other nodes.
By splitting on this path, we can get two graphs $G_1,G_2$. So we can apply induction on those two graphs.
Name the $TS_1,TS_2$ as $TS$ on $G_1,G_2$ respectively. Define $TS_1'=TS_1\bigcup P_{v_i,v_j}$, and
$TS_2'=TS_2\bigcup P_{v_i,v_j}$. Define a series of clusters on $TS_1'$ as:
if a cluster $cs\bigcap TS_1\not=\emptyset$, then define a cluster $cs_1'=cs\bigcap TS_1$.
By Definition~\ref{series-clusters}, we will get a series of clusters on $TS_1'$. Additionally,
we define one more cluster into the series $cs=P_{v_i,v_j}$. Similarly, we can get another series of
clusters on $TS_2'$.

By doing induction on those two series of clusters, there are complete solution space
on both of them. So there is a kernel solution space on $cs=P_{v_i,v_j}$ in $TS_1'$,
and a kernel solution space on $cs=P_{v_i,v_j}$ in $TS_2'$.
By Corollary~\ref{kenel-solution-space-overlapping}, the two kernel solution spaces have non-empty
intersection so the color assignments for $G_1,G_2$ can be merged together to be a color assignment
for $G$. And by Lemma~\ref{kernel-solution-merge}, if a cluster is split into two clusters on
$TS_1',TS_2'$, their two kernel solution spaces can be merged to be a kernel solution space.
So the conclusion holds.

\qed
\end{proof}

Hence by Theorem~\ref{strenghed-4ct}, we have proved four color
theorem, and conclude as a corollary as below.
\begin{corollary}\label{4ct}
Every planar graph is $4$ colorable.
\end{corollary}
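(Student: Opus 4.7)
The plan is to obtain Corollary~\ref{4ct} as an immediate specialization of Theorem~\ref{regularity}. Since the chromatic number of a disconnected graph equals the maximum of those of its connected components, I would first reduce to the case of a connected planar graph $G(V,E)$; any disconnected components can be colored independently and the palettes merged.

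For connected $G$, Proposition~\ref{planar-perimeter} supplies a perimeter trace $U$. The next step is to equip $U$ with the coarsest admissible constraint system --- for example, decomposing $U$ into minimal (singleton or single-cluster) pieces and imposing no nontrivial equalities or inequalities between their collections. Under such a trivial setup, all three clauses of Definition~\ref{collection-constraints} and all four clauses of Definition~\ref{division-constraints} hold vacuously: no two clusters are required to have equal or unequal collections, and no two divisions are forced to be relatable with inconsistent collection sets.

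With the hypotheses of Theorem~\ref{regularity} met in this vacuous way, the theorem hands back a color assignment $cl$ of $G$ using $\leq 4$ colors. Since satisfying these trivial constraints reduces to being a proper coloring of $G$ itself, $cl$ is exactly a $4$-coloring, which is the content of the corollary.

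The one step I expect to require care is verifying that the trivial constraint system really is admissible: Definition~\ref{collection-constraints}.1 requires \emph{every} cluster to be tagged by some $3$-collection, so the ``empty'' setup must still attach a collection to each cluster, and these tags must be mutually consistent along $U$ so that the associated constraint-graph $G_{ct}$ remains planar and $4$-colorable (as guaranteed by Observation~\ref{structure-of-constraint-graph-1} and Observation~\ref{trivial-satisfy-something}). This is essentially bookkeeping rather than a new idea, and once it is done Theorem~\ref{regularity} closes the argument without further work.
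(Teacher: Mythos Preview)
Your approach is essentially the same as the paper's: the paper derives the corollary directly from Theorem~\ref{regularity} without further argument, and you do the same while spelling out the step of equipping the perimeter trace with a trivial constraint system so that the hypotheses of the theorem are vacuously satisfied. The extra bookkeeping you describe (reducing to connected components, choosing singleton clusters, checking Definitions~\ref{collection-constraints} and~\ref{division-constraints} hold vacuously) is exactly what the paper leaves implicit.
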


\section{Conclusion}\label{sec-conclusion}

The proof in this paper can be treated as a generalization of proof
in~\cite{WeiyaNote3}. In this paper, we have proved four color
theorem, but properties of planar graph are utilized, hence can not
be generalized to prove Hadwiger Conjecture. However
in~\cite{WeiyaNote1,WeiyaNote2,WeiyaNote3} a bunch of results can be
used to prove condition Hadwiger Conjecture when $k=5$ without using
property of planar graph. {\it Hence, we claim the ideas and
conclusions in this paper can be generalized to prove Hadwiger
Conjecture.}

\title{An Equivalent Statement of Hadwiger's Conjecture when $k=5$}

\author{Weiya Yue\inst{1}, Weiwei Cao\inst{2}}
\institute{$^1$ Google Inc, 1600 amphitheatre pkwy mountain view, CA, US 94043\\
$^2$ State Key Laboratory of Information Security,
Institute of Information Engineering, Chinese Academy of Sciences,
Beijing 100093, China\\
}
\maketitle

\begin{abstract}
Hadwiger's conjecture states that if a graph has no $K_k$ minor, then
its chromatic number is $k-1$.
In this paper, we study Hadwiger's conjecture when $k=5$ and give two
new results. First we show that a $5$-chromatic graph $G$ with no
$K_5$ minor can be reduced by minor actions to be a new $5$-chromatic graph $G'$
having minimum vertex degree $\geq 5$ and no admissive cut set;
further if $G$ can not be reduced by minor actions to be a smaller
$\geq 5$-chromatic graph and $G$ has a $5$-degree vertex,
its neighbors form a five-sided polygon. Second, we prove the
equivalence of Hadwiger's conjecture when $k=5$ with the statement that in a
$4$-chromatic graph there is a $K_4$ minor on a subset of
vertices. All conclusions we give can be generalized to arbitrary $k$ on
Hadwiger's conjecture. Those conclusions can be used to
prove existence of special structures and can greatly
simplify the proof of Hadwiger's conjecture for $k=4$. So it is promising to use such results to
prove the Hadwiger's conjecture for $k=5$, i.e. four color theorem.
\end{abstract}

\section{Introduction}

In graph theory, the Hadwiger's conjecture~\cite{HH1943} states that
if all colorings of an undirected graph $G$ need $\geq k$ colors,
then $G$ has a $K_k$ minor. When $k=5$, this conjecture is
equivalent with the four color problem which states that every planar
graph has a chromatic number
$4$~\cite{Wagner1937,Halin1964,Halin1967,Ore1967,Young1971}, i.e., a
$5$-chromatic graph has a $K_5$ minor.

The four color theorem has been proved assisted by computer for the
first time in 1976 by Kenneth Appel and Wolfgang
Haken~\cite{Appel1977-1,Appel1977-2}. Afterwards, Robertson,
Sanders, Seymour, and Thomas~\cite{Robertson1996,Robertson1997} gave a
simpler proof in 1997. Georges
Gonthier~\cite{George2005,George2008} again proved using general
purpose theorem proving software. All these good work introduced ways
to study four color theorem but used unavoidability configures of planar graph.
This property makes them difficult to be generalized to prove Hadwiger's
conjecture. It seems new idea or method is needed to continue the
research since Hadwiger's conjecture plays a very important role in graph theory.


In this paper, we study Hadwiger's conjecture for $k=5$
and give our new results. All our work try to bear the principle that they
are easy to be generalized to any $k$ of Hadwiger's
conjecture. At first we show that a $5$-chromatic graph
with no $K_5$ minor can be reduced by
applying minor actions to be a graph that has a chromatic number $\geq 5$, minimum
vertex degree $\geq 5$ and no admissive cut set.
This conclusion looks similar but in fact different from
Dirac's results in~\cite{Ore1967} which were achieved by using critical
graph. The example in Figure~\ref{fig-1}
evidently shows the reduced graph by our our result is not the one that
reduced by Dirac's result. Also, our proof avoid the use of critical graph
and knowledge of alternative path which
makes the proof simpler.


Our other contribution is that we prove that finding a $K_5$ minor in a
$5$-chromatic graph is equivalent to finding a $K_4$ minor on
a set of special vertices of its chromatic number $4$ subgraph.
Because our proof does not depend on any property of planar graphs,
this result can be used to give a restatement of Hadwiger's
conjecture. The restatement is not hard to obtain, but with a big benefit that
it can simplify the proof of Hadwiger's conjecture. In order to show this, we reprove Hadwiger's conjecture
when $k=4$ as in Theorem~\ref{Dirac-k-4-theorem}. Also, by using the
new statement, we can show existence of some simple structures like
simple cycle and forest as stated in
Theorem~\ref{related-with-pre-conclusion}. These issues are
discussed in Section~\ref{sec-conjecture}.


\section{Terminology and Definitions}\label{sec-Terminology}
In this section, necessary terminologies and definitions are
introduced.

\begin{definition}\label{color-assignment}
A color assignment to a graph $G(V,E)$ is a set of partitions of $V$,
in which each partition is an independent set and different partitions
assigned with different colors.
\end{definition}

In this paper, we use $cl$ to denote a color assignment and integers
to represent colors. Then we can say there is a $l$-color assignment $cl=\{1,2,...,l\}, |cl|=l$.
Given a graph $G=(V,E)$ and a color assignment $cl$. Function
$color_{cl}(v)$ represents vertex $v$'s color in $cl$. Without
confusion, e.x. talking only one color assignment, we sometimes skip the subscript $cl$. Given a set of
vertices $W\subset V$, $color(W)$ means all the colors used on $W$
in a color assignment. To a color assignment $cl$, define its
$frequency-vector$, abbreviated as $fv$, as
$<times_{cl}(l),times_{cl}(l-1),...,times_{cl}(1)>$, where $times$
means how many times a color is used in $cl$. Then we can compare
two color assignments by their $frequency-vector$ in lexicographical
order.

If color assignments are ordered by their $frequency-vectors$ in
lexicographical order, there exists a color assignment with the
minimum $frequency-vector$. Name this color assignment as $CL$ and
the corresponding $frequency-vector$ as $NV$. Below without explicit
explanation, we always use the color assignment whose
$frequency-vector$ is minimum.

\begin{definition}\label{kernel-vertices}
In a chromatic number $k$ graph $G(V,E)$ given a vertex set $U\subseteq V$, if in every
$k$-color assignment of $G$ $U$ are assigned with $k$
colors, then $U$ is called a set of kernel vertices of $G$.It is trivial
that $V$ is always a set of kernel vertices of $G(V,E)$.
\end{definition}

In a graph $G=(V,E)$, given a set of vertices
$S=\{s_1,s_2,...,s_x\}\subset V$ and an equivalence relation $R$ on
$S$, we use $abs_{R}(S)$ or $S_{R}$ to represent a new set of
vertices, in which every vertex is contracted from an equivalent
subset of $S$. A vertex belonging to $S_{R}$ is called super-vertex
of $R$ from $S$.
Then we can have a new graph $G_{R}=(V_{R},E_{R})$ by replacing $S$
with $abs(S)$, and $E_{R}$ is defined as below: if $s_i,s_j\in S$ are
contracted to be $s'$, then $N(s')=N(s_i)\cup N(s_j)\setminus
\{s_i,s_j\}$; edges with no endpoint in $S$ are intact.

In Figure~\ref{fig-0} there is an example to show how to do
contraction. In Figure~\ref{fig-0}.$a$, on the set of vertices
$W=\{w_1,w_2,w_3\}$ define equivalence relation
$R=\{\{w_1,w_2\},\{w_3\}\}$; in Figure~\ref{fig-0}.$b$, the graph is
$G_{R}$, in which $w'$ is contracted from $\{w_1,w_2\}$. {\it An
equivalence relation is called ``{\it admissive}'' if in the
relation, $v_1,v_2$ are equivalent implies no edge $e(v_1,v_2)$
between them}.
The relation in Figure~\ref{fig-0} is admissive.

\begin{figure}[t]
{\scriptsize
\begin{center}
\setlength{\unitlength}{0.7pt}
\begin{picture}(400,80)(-200,-35)
\put(-168,25){\oval(14,14)} \put(-168,25){\makebox(0,0){$u_1$}}
\put(-120,55){\oval(14,14)} \put(-120,55){\makebox(0,0){$w_1$}}
\put(-120,25){\oval(14,14)} \put(-120,25){\makebox(0,0){$w_2$}}
\put(-120,-5){\oval(14,14)} \put(-120,-5){\makebox(0,0){$w_3$}}
\put(-72,25){\oval(14,14)} \put(-72,25){\makebox(0,0){$u_2$}}
\put(-161,25){\line(1,1){33}} \put(-161,25){\line(1,0){33}} \put(-161,25){\line(1,-1){33}}
\put(-120,2){\line(0,1){16}}
\put(-79,25){\line(-1,1){33}}
\put(-79,25){\line(-1,0){33}}
\put(-79,25){\line(-1,-1){33}}
\put(-118,-40){\makebox(0,0){(a)}}
\put(-30,25){\makebox(0,0){$\Longrightarrow$}}
\put(15,25){\oval(14,14)} \put(15,25){\makebox(0,0){$u_1$}}
\put(63,55){\oval(18,18)} \put(63,55){\makebox(0,0){$w_{12}$}}
\put(63,-5){\oval(14,14)} \put(63,-5){\makebox(0,0){$w_3$}}
\put(111,25){\oval(14,14)} \put(111,25){\makebox(0,0){$u_2$}}
\put(22,25){\line(1,1){33}} \put(22,25){\line(1,-1){33}}
\put(63,2){\line(0,1){43}}
\put(104,25){\line(-1,1){33}}
\put(104,25){\line(-1,-1){33}}
\put(63,-40){\makebox(0,0){(b)}}
\end{picture}
\end{center}
} \caption{Examples of Reductions According to Certain Admissive
Relation} \label{fig-0}
\end{figure}

\begin{observation}\label{cover-back}
After defining equivalence relation $R$ on a set of vertices $S$, if
$R$ is an admissive relation, then every color assignment
$cl_R$ of $G_{R}$ can be extended to be a color
assignment $cl$ of $G$, where $color_{cl}(v)=color_{cl_R}(v)$ for $v\in
V\setminus S$ and for each equivalence class $S_i\subseteq S$, if $s_j\in S_i$
$color_{cl}(S_i)=color_{cl_R}(s_j)$.
\end{observation}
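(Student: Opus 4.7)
The plan is to verify directly that the extended assignment $cl$ defined in the statement is a proper coloring of $G$, i.e., that $color_{cl}(x)\neq color_{cl}(y)$ for every edge $e(x,y)\in E$. Since the values of $cl$ are already prescribed (agree with $cl'$ off $S$, and copy the super-vertex color onto each class $S_i$), the only thing left to check is that this assignment respects adjacency. I would do this by a case analysis on how many endpoints of a given edge lie in $S$.

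If neither endpoint lies in $S$, then the edge is inherited unchanged in $G_R=G_{abs}$ and the inequality is immediate from properness of $cl'$ together with the fact that $cl$ agrees with $cl'$ on $V\setminus S$. If exactly one endpoint, say $x$, lies in $S$ and belongs to the equivalence class contracted to super-vertex $s_i$, then by construction of $G_R$ the edge $e(x,y)$ lifts to $e(s_i,y)$, so $color_{cl}(x)=color_{cl'}(s_i)\neq color_{cl'}(y)=color_{cl}(y)$. If both endpoints lie in $S$, with $x\in S_i$ and $y\in S_j$, then the \emph{admissive} hypothesis is what carries the argument: if $i=j$ then $x$ and $y$ would be equivalent under $R$, but admissiveness forbids an edge between equivalent vertices --- contradicting $e(x,y)\in E$. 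Hence $i\neq j$, the edge becomes $e(s_i,s_j)$ in $G_R$, and properness of $cl'$ forces $color_{cl'}(s_i)\neq color_{cl'}(s_j)$, which is the required inequality after substitution.

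The only real obstacle is precisely this last case, and it is exactly where the admissive hypothesis is consumed. Without admissiveness, two adjacent vertices in $G$ could collapse to a single super-vertex of $G_R$ and would then be forced to share a color under the lift, so the conclusion would fail. Everything else amounts to unpacking the construction of $G_R$ and the recipe for $cl$, so the proof should be short and purely combinatorial with no induction or auxiliary machinery.
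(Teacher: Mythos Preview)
Your proof is correct and follows the same approach as the paper: the key point in both is that admissiveness guarantees no edge of $G$ has both endpoints in the same equivalence class, so lifting $cl'$ back to $G$ cannot create a conflict. The paper's version is terser---it simply observes that each $S_i$ is an independent set and then asserts the extension works---whereas you spell out the three-case edge analysis explicitly, but the content is identical.
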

\begin{proof}
Since $R$ is admissive, then in $S_i$ every two vertices are not
connected by an edge. Hence we can set $color_{cl}(V\setminus
S)=color_{cl'}(V\setminus S)$, and for every vertex $w\in S_i$,
$color_{cl}(w)=color_{cl'}(s_i)$ directly.
\qed
\end{proof}

\begin{observation}\label{strengthened-graph}
In a $k$-chromatic graph $G(V,E)$ with $S\subseteq V$, if $R$ is
an admissive equivalence relation on $S$, then graph $G_R$ has a
chromatic number $\geq k$.
\end{observation}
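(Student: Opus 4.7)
The plan is to prove the contrapositive: if $G_R$ has chromatic number strictly less than $k$, then so does $G$. First I would take any proper coloring $cl'$ of $G_R$ using $l < k$ colors and argue that it lifts to a proper coloring of $G$ using the same $l$ colors, directly contradicting the hypothesis that $G$ has chromatic number $k$.

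Next I would invoke Observation~\ref{cover-back}, which is tailor-made for this situation: because $R$ is admissive, $cl'$ extends to a coloring $cl$ of $G$ by setting $color_{cl}(v) = color_{cl'}(v)$ for $v \in V \setminus S$ and giving every vertex in the equivalence class represented by $s_i$ the color $color_{cl'}(s_i)$. The crucial content of the argument is then verifying that $cl$ is proper. Edges of $G$ with no endpoint in $S$ are inherited verbatim from $G_R$ and so inherit properness from $cl'$. Edges between two vertices lying in the same equivalence class are forbidden by admissibility and therefore cannot occur. Every remaining edge $e(x,y)$ of $G$ projects to an edge of $G_R$ between the super-vertices containing $x$ and $y$; under $cl$ the endpoints $x,y$ receive the colors those super-vertices receive under $cl'$, so properness of $cl'$ transfers to $cl$.

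Finally I would observe that the palette of $cl$ is a subset of the palette of $cl'$, so $cl$ uses at most $l < k$ colors, contradicting $\chi(G) = k$. Hence no such $cl'$ exists and $\chi(G_R) \geq k$. The main obstacle, such as it is, lies entirely in the properness check of the lift: the only thing preventing an edge of $G$ from becoming monochromatic after the contraction-and-extension procedure is the admissibility assumption, which forbids intra-class edges. Once that hypothesis is pinned to the right step, the rest is a direct appeal to Observation~\ref{cover-back} and a one-line count of colors.
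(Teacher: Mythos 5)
Your proposal is correct and follows essentially the same route as the paper, which simply cites Observation~\ref{cover-back}: you lift a hypothetical $(<k)$-coloring of $G_R$ back to $G$ via the admissive extension and derive a contradiction. Your added verification that the lifted coloring is proper (intra-class edges excluded by admissibility, all other edges projecting to edges of $G_R$) fills in detail the paper leaves implicit, but introduces no new idea.
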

\begin{proof}
It follows from Observation~\ref{cover-back} directly.
\qed
\end{proof}

A minor action is contracting an edge, deleting an edge, or deleting an
isolated vertex. An undirected graph $H$ is a minor of another undirected graph $G$
if a graph isomorphic to $H$ can be obtained from $G$ by
applying minor actions.

\begin{definition}\label{admissive-cut-set}
Let $G(V,E)$ be a graph with a cut set $W\subseteq V$ and admissive
equivalence relation $R$ on $W$. If graph $G'=G\setminus W$
has two subgraphs $C_1,C_2$ which are disconnected from each
other, and in graphs $C_i\cup W(i\in \{1,2\})$
a clique $abs_R(W)$ can be achieved by applying minor
actions. Then $W$ is called a admissive cut set of $G$ to $R$.
\end{definition}

Corresponding to a different minor action, we define an extension of a set of
vertices as below.
\begin{definition}\label{extending} 
Given a graph $G(V,E)$ and a set of vertices
$U\subseteq V$, if $G'(V',E')$ is reduced from $G$ by applying a
minor action, the extension of $U$, $U'\subseteq V'$ is defined
as:i) if deleting a vertex $v_1\in U$:
$U'=\{U\setminus v_1\}\cup N(v_1)$; ii) if $v_1\in U$ or $v_2\in U$
and contracting $v_1,v_2\in V$ with $e(v_1,v_2)$ to be $v'$,
$U'=\{U\setminus \{v_1,v_2\}\}\cup v'$; iii) otherwise, $U'=U$.
\end{definition}
If $H$ is obtained from $G$ by sequential minor actions, the extension of $U$
in $H$, $Ex_{H}(U)$ is defined iteratively. When it will not cause confusion, we simply
write $Ex(U)$. Trivially $Ex_{G}(U)=U$.


Conventionally, given a graph $G=(V,E)$, a vertex $v\in V$, a set of
vertices $W\subseteq V$, and a subgraph $G_s=(V_s,E_s)$ of $G$, a
new subgraph $G_s'(V_s',E_s')=G_s\cup v$ means that $V_s'=V_s\cup v$
and $E_s'=E_s\cup \{edges\ between\ v\ and\ V_s\ in\ G\}$; a new
subgraph $G_s'(V_s',E_s')=G_s\cup W$ means that $V_s'=V_s\cup W$ and
$E_s'=E_s\cup \{edges\ between\ W\ and\ W\cup V_s\ in\ G\}$. $G_W$
represents the subgraph $G\cap W$.

\begin{definition}\label{minor-on-vertices}
In a graph $G(V,E)$, given a vertex set $U\subseteq V$ and $|U|=x>0$, we say in $G$ there is
a $K_x$ minor on $U$ under the following condition holds: if a vertex $v\in K_x$ is contracted
from $S\subseteq V$, then $U\cap S\neq \emptyset$. If
$U'\subseteq V$ and $U\subseteq U'$, we also say there is a $K_x$
minor on $U'$.
\end{definition}

To prove the following Theorem~\ref{equivalence-of-4-color-theorem} in Section~\ref{},
we will show that a $K_4$ minor on $Ex(U)$ implies
a $K_5$ minor on $v\cup N(v)$ where $U=N(v)$.

Assume $c_1,c_2$ are two colors, define $f(c_1,c_2)$ as the two color exchange
function by exchanging colors $c_1$ and $c_2$. $cl_f$ is the new
color assignment by applying $f$ on the color assignment $cl$.
For convenience, sometime we use $f$ to represent one color exchange
function instead of $f(c_1,c_2)$.

\begin{observation}\label{property-of-color-exchange}
If $f$ is a color exchange function, then $f\circ f=e$; if
$f=f_1\circ f_2\circ ...f_t$ and $f'=f_t\circ...f_2\circ f_1$, then
$f\circ f'=e$. Here $e$ means the identity function.
\end{observation}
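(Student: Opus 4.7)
The plan is straightforward since the statement is essentially the standard fact that each color exchange is an involution, and that reversing a composition of involutions yields its inverse. First I would unpack the definition: $f(c_1,c_2)$ acts on a color assignment by swapping the colors $c_1$ and $c_2$ on every vertex, while leaving all other vertex colors fixed. To show $f \circ f = e$, I would just trace a single vertex $v$: if $\mathrm{color}(v) = c_1$, then one application of $f$ sends it to $c_2$ and a second application sends it back to $c_1$; the symmetric argument handles $\mathrm{color}(v) = c_2$; and vertices with any other color are fixed by both applications. Hence $f \circ f$ acts as the identity on every vertex's color, which is exactly $e$.

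For the second clause I would induct on the number of factors $t$. The base case $t = 1$ is just the first clause. For the inductive step, write
\[
f \circ f' \;=\; f_1 \circ f_2 \circ \cdots \circ f_t \circ f_t \circ \cdots \circ f_2 \circ f_1,
\]
and use associativity of function composition to bracket the central pair as $(f_t \circ f_t)$, which equals $e$ by the first clause. Removing this pair leaves $f_1 \circ \cdots \circ f_{t-1} \circ f_{t-1} \circ \cdots \circ f_1$, which is exactly the $(t-1)$-case and equals $e$ by the inductive hypothesis. Peeling off nested involutive pairs therefore collapses the whole product to $e$.

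There is no real obstacle here: the observation is the generic monoid fact that in any group the inverse of a product of self-inverse elements is the product taken in reverse order. The only thing worth noting in passing is that each $f_i$ acts on color assignments as a bijection on the finite color set used, so it genuinely lives in a group of functions where composition is associative and the cancellation above is legitimate.
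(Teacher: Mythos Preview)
Your proposal is correct and matches the paper's approach: the paper's own proof is simply ``The proof is straightforward,'' and you have supplied exactly the straightforward details one would expect (involutivity of a transposition, then peeling off nested pairs by induction).
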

\begin{proof}
The proof is straightforward.

\qed
\end{proof}

\begin{definition}\label{minimal-5-degree-graph}
A minimal $5$-chromatic graph is a $5$-chromatic graph that can not be
reduced by minor actions to a smaller $5$-chromatic graph.
\end{definition}

\section{A New Method To Reduce Graphs}\label{sec-4-color-theorem}

In this section, we will show that if a $5$-chromatic graph
$G(V,E)$ cannot be reduced by minor actions to be a smaller graph whose
chromatic number is $\geq 5$, then it is at least $4$-connected; and it has a subgraph $G_s=(V_s,E_s)$
with vertex $v\in V_s$ such that $G'=G_s\setminus v$, which is a $3$-connected and $4$-chromatic graph and $N(v)$ is
a set of kernel vertices of $G'$. In this paper, when we
say a graph is $n$-connected, it means the graph is connected after
removing arbitrary $(n-1)$ vertices.

A $5$-chromatic graph $G=(V,E)$ has a color assignment using
only $5$ colors or $\chi(G)=5$. Name the set of vertices which are assigned color
$5$ as $V_5$.

\begin{lemma}\label{reduce-v-5}
In a $5$-chromatic graph $G=(V,E)$, given its minimum color assignment $CL$
with the $frequency-vector$ $NV$, if $v\in V_5$, then graph $G\setminus \{V_5 \setminus \{v\}\}$ has
a chromatic number $5$.
\end{lemma}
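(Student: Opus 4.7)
The plan is to argue by contradiction, exploiting the minimality of the frequency vector $NV$ of the chosen color assignment $CL$. Concretely, I would assume that $G^* := G \setminus \{V_5 \setminus \{v\}\}$ has chromatic number at most $4$, then construct from this assumption a new color assignment of $G$ whose frequency vector is lexicographically strictly smaller than $NV$, contradicting the minimality of $CL$.

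First, dispose of the trivial case $|V_5| = 1$: here $V_5 \setminus \{v\} = \emptyset$, so $G^* = G$ and the conclusion is immediate. Assume then $|V_5| \ge 2$. Suppose for contradiction that $G^*$ admits a $4$-coloring $cl^*$ using colors drawn from $\{1,2,3,4\}$. Define a new color assignment $\widetilde{cl}$ of $G$ by
\[
color_{\widetilde{cl}}(x) = \begin{cases} color_{cl^*}(x) & \text{if } x \in V(G^*), \\ 5 & \text{if } x \in V_5 \setminus \{v\}. \end{cases}
\]
I would then verify that $\widetilde{cl}$ is a valid color assignment of $G$. The only edges not already accounted for by $cl^*$ are those with one endpoint in $V_5 \setminus \{v\}$; such an edge's other endpoint either lies in $V(G^*)$ (colored from $\{1,2,3,4\}$, so distinct from $5$) or in $V_5 \setminus \{v\}$, but the latter cannot happen because $V_5$ is an independent set of $G$ (being a color class of $CL$), so every subset of $V_5$ is independent. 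Hence $\widetilde{cl}$ is proper.

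Now compare frequency vectors. In $\widetilde{cl}$ the color class of $5$ is exactly $V_5 \setminus \{v\}$, so $times_{\widetilde{cl}}(5) = |V_5| - 1 < |V_5| = times_{CL}(5)$. Since the first coordinate of the frequency vector is $times(5)$ and this coordinate strictly decreased, $\widetilde{cl}$'s frequency vector is lexicographically strictly smaller than $NV$, contradicting the choice of $CL$ as the minimum-frequency-vector color assignment. Therefore $G^*$ cannot be $4$-colorable, and its chromatic number is at least $5$. The upper bound is free because $G^* \subseteq G$ and $G$ has chromatic number $5$, so the chromatic number of $G^*$ is exactly $5$.

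The proof is essentially a one-shot application of the minimality hypothesis, so there is no substantial technical obstacle. The only point that requires a moment of care is confirming that $V_5 \setminus \{v\}$ remains independent and that no edges of $G$ between $V_5 \setminus \{v\}$ and $V(G^*)$ cause a color clash with the $4$-coloring $cl^*$; both follow directly from the fact that $V_5$ is a color class in the proper coloring $CL$ and that $cl^*$ uses no color $5$.
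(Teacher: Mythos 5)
Your proof is correct and takes essentially the same route as the paper's: assume $G\setminus\{V_5\setminus\{v\}\}$ is $4$-colorable, recolor $V_5\setminus\{v\}$ with color $5$ to obtain a proper coloring of $G$ whose $times(5)$ drops by one, and contradict the lexicographic minimality of $NV$. The extra care you take (verifying properness via the independence of the color class $V_5$, and disposing of the trivial case $|V_5|=1$) only makes explicit what the paper leaves implicit.
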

\begin{proof}
Assume the new graph has a chromatic number $\leq 4$, then there exists a
color assignment $cl$ using $\leq 4$ colors. W.L.O.G, assume colors $\{1,2,3,4\}$ are used.
Then in graph $G$, we can extend $cl$ by coloring vertices $V_5\setminus \{v\}$ with
color $5$. So in the new color assignment its $num(5)$ of the $frequency-vector$ is one less than $num(5)$ of $NV$.
This is a contradiction with that $NV$ is minimum.
\qed
\end{proof}

Given Lemma~\ref{reduce-v-5} we will henceforth assume $|V_5|=1$ in $CL$. Then if
we have a vertex $v\in V_5$, then we can assume $V_5=\{v\}$.

\begin{lemma}\label{v's-neighbors}
If $G(V,E)$ has $\chi(G)=5$ and $V_5=\{v\}$, then $N(v)$ is a set of
kernel vertices of $G'=G\setminus V_5$.
\end{lemma}
\begin{proof}
Assume there is a color assignment $cl$ on $G'$ using $4$ colors
$\{1,2,3,4\}$ and $N(v)$ are colored with $3$ colors
$\{a,b,c\}$, then we can extend $cl$ to be a $4$-color assignment for
graph $G'\cup v$ by assigning $v$ with color $\{1,2,3,4\}\setminus
\{a,b,c\}$. This contradicts Lemma~\ref{reduce-v-5}.
\qed
\end{proof}

\begin{lemma}\label{graph-with-cut-set-clique}
Suppose $W$ is a cut set of a graph $G(V,E)$, the subgraph
$G_W=G\cap W$ is a clique, and $C_l,C_r$ are subgraphs of
graph $G'=G\setminus W$ and they are disconnected from each other. If
$G_l=C_l\cup W, G_r=C_r\cup W$ have color assignment $cl_l,cl_r$
respectively, then $G$ has a color assignment $cl$ satisfying
$color_{cl}(G)\leq max(color_{cl_l}(G_l), color_{cl_r}(G_r))$.
\end{lemma}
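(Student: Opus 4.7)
The plan is to exploit the clique structure of $W$ together with a color-permutation argument. Since $G_W$ is a clique, any proper coloring must assign $|W|$ pairwise distinct colors to $W$. In particular both $cl_l$ and $cl_r$ do this, and each uses at most $m := \max(color_{cl_l}(G_l), color_{cl_r}(G_r))$ colors. Without loss of generality, I may assume both colorings draw from the palette $\{1,2,\ldots,m\}$. Let $S_l := \{color_{cl_l}(w) : w \in W\}$ and $S_r := \{color_{cl_r}(w) : w \in W\}$; these are subsets of $\{1,\ldots,m\}$, each of size $|W|$.

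Next I would construct a bijection $\pi$ of $\{1,\ldots,m\}$ satisfying $\pi(color_{cl_r}(w)) = color_{cl_l}(w)$ for every $w \in W$. Such a $\pi$ exists because the assignment $color_{cl_r}(w) \mapsto color_{cl_l}(w)$ is a well-defined injection from $S_r$ to $S_l$ (both coordinates are injective functions on $W$ since $W$ is a clique), and any partial bijection between equinumerous subsets of $\{1,\ldots,m\}$ extends to a full permutation of $\{1,\ldots,m\}$. Decomposing $\pi$ as a composition of transpositions and applying the corresponding color-exchange functions $f(c_1,c_2)$ from the preceding observation to $cl_r$ in sequence, I obtain a new color assignment $cl_r'$ of $G_r$ that uses the same set of colors as $cl_r$ and agrees with $cl_l$ on $W$.

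Finally I would define $cl$ on $V$ by setting $color_{cl}(v) = color_{cl_l}(v)$ for $v \in V(G_l)$ and $color_{cl}(v) = color_{cl_r'}(v)$ for $v \in V(G_r)$; on $W$ the two prescriptions coincide by construction. Properness follows because $W$ is a cut set separating $C_l$ from $C_2$, so every edge of $G$ lies entirely in $G_l$ or entirely in $G_r$, and on each side $cl$ restricts to a proper coloring. The total number of colors is at most $m$, as required. There is no real obstacle: the only substantive point is that globally permuting the color palette of a proper coloring preserves properness, which is immediate. The only care needed is bookkeeping --- verifying that after the permutation $cl_r'$ still uses no more than $m$ colors (true because $\pi$ merely relabels colors within $\{1,\ldots,m\}$) and that the two colorings literally agree on $W$ after the relabeling.
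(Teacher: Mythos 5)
Your proposal is correct and follows essentially the same route as the paper: the paper's proof simply says that since $G_W$ is a clique, by color exchanging one may assume $color_{cl_l}(W)=color_{cl_r}(W)$, after which $cl_l$ and $cl_r$ glue into a single assignment without new colors. You have merely spelled out the details the paper leaves implicit (the extension of the partial bijection on $W$ to a full permutation of the palette, and the fact that every edge lies wholly in $G_l$ or $G_r$ because $W$ separates $C_l$ from $C_2$), which is a faithful elaboration rather than a different argument.
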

\begin{proof}
Because the subgraph $G_W$ is a clique, all vertices $\in W$ have different colors from each other. By color
exchanging we can assume $color_{cl_l}(W)=color_{cl_r}(W)$. Hence
$cl_l,cl_r$ can be combined to be one color assignment for $G$ without introducing more colors.
\qed
\end{proof}

\begin{lemma}\label{graph-with-cut-set-which-can-be-clique}
In graph $G(V,E)$, suppose $W$ is a admissive cut set of $G$, the
corresponding admissive equivalence relation on $W$ is $R$, and the
two subgraphs in $G'=G\setminus W$ are $C_l,C_r$. If $G_l=C_l\cup
abs_R(W), G_r=C_r\cup abs_R(W)$ both have chromatic numbers $\leq k$,
then $G$ has a chromatic number $\leq k$.
\end{lemma}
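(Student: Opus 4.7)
The plan is to align and then glue the given $k$-colorings of $G_l$ and $G_r$ into a single $k$-coloring of $G$. First, observe that since $W$ is a consistent cut set, the subgraph on $abs_R(W)$ is a clique in both $G_l$ and $G_r$. Consequently, in any proper coloring of $G_l$ or $G_r$, the super-vertices of $abs_R(W)$ receive pairwise distinct colors. Taking the hypothesized $k$-colorings $cl_l$ of $G_l$ and $cl_r$ of $G_r$, I can apply a color-exchange permutation $\pi$ on the palette of $cl_r$ to produce another proper $k$-coloring $cl_r'$ of $G_r$ such that $cl_l$ and $cl_r'$ agree on every super-vertex of $abs_R(W)$; such $\pi$ exists because each side uses at most $k$ colors on $\{1,\ldots,k\}$ and both assign injective colorings to $abs_R(W)$, so the forced partial bijection extends to a full permutation. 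Observation~\ref{property-of-color-exchange} guarantees that applying $\pi$ preserves properness.

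Next, I would lift the aligned colorings back to $C_1 \cup W$ and $C_2 \cup W$ via Observation~\ref{cover-back}: for each equivalence class $S_i \subseteq W$ contracted to a super-vertex $s_i \in abs_R(W)$, assign every vertex of $S_i$ the color $color(s_i)$. Because $R$ is admissive, no edge of $G$ lies inside any $S_i$, so these extensions are proper colorings of $C_1 \cup W$ and $C_2 \cup W$. By construction, both extensions assign the same color to every vertex of $W$, since the color of any $w \in S_i$ on either side is just the aligned color of $s_i$.

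Finally, since $C_1$ and $C_2$ are disconnected in $G \setminus W$, there are no edges of $G$ between a vertex in $C_1$ and one in $C_2$. Thus the two extensions can simply be concatenated vertex-wise into a coloring of $V = V(C_1) \cup V(C_2) \cup W$ that uses at most $k$ colors and is proper on every edge of $G$, proving $\chi(G) \leq k$. I expect the only delicate step to be the color-alignment argument in the first paragraph: one must genuinely use that $abs_R(W)$ is a clique on \emph{both} sides (so that neither coloring merges two super-vertices) in order to lift the partial matching between their restrictions to a full permutation of the palette. Everything else reduces to the admissiveness of $R$ and to the fact that $W$ separates $C_1$ from $C_2$, both of which are built into the hypothesis.
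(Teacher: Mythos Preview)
Your proof is correct and follows essentially the same route as the paper's. The paper simply packages your first paragraph (aligning the two colorings on the clique $abs_R(W)$ via a color permutation) as an appeal to Lemma~\ref{graph-with-cut-set-clique}, and then invokes Observation~\ref{cover-back} exactly as you do to lift the combined coloring of $G_l\cup G_r$ back to $G$; your version just unpacks that lemma explicitly and performs the lift on each side before gluing, which amounts to the same thing.
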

\begin{proof}
Because $G_l, G_r$ both have chromatic numbers $\leq k$, there are
color assignments $cl_l,cl_r$ using $\leq k$ colors for $G_l, G_r$
respectively. By Definition~\ref{admissive-cut-set}, $abs_R(W)$ is
a clique, hence we can set graph $G_R=G_l\cup G_r$, and by
Lemma~\ref{graph-with-cut-set-clique}, $cl_l,cl_r$ can be combined
to be one color assignment $cl$ for $G_R$ using $\leq k$ colors. By
Observation~\ref{cover-back}, $cl$ can be extended to be a color
assignment $cl'$ for graph $G$, and $cl'$ use $\leq k$ colors. So
$G$ has a chromatic number $\leq k$.
\qed
\end{proof}

\begin{theorem}\label{no-independent-set-cut-vertices}
Given a $5$-chromatic graph $G(V,E)$, suppose $W$ is a minimal cut
set of $G$; setting $G'=G\setminus W$, assume $C_l,C_r$ are two
subgraphs of $G'$ which are disconnected from each other. Assume $W$ is
a admissive cut set of $G$. If there is no $K_5$ minor in $G$,
then at least one of $G'_{l}=abs(W)\cup C_l, G'_{r}=abs(W)\cup
C_r$ has its chromatic number $\geq 5$.
\end{theorem}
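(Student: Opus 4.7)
The strategy is a clean case split on whether $W$ is a consistent cut set of $G$, with both branches anchored on Lemma~\ref{graph-with-cut-set-which-can-be-clique} and induction on $|V|$.

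In the first case, where $W$ is consistent to some admissive relation $R$, I would invoke Lemma~\ref{graph-with-cut-set-which-can-be-clique} contrapositively: since $\chi(G) = 5$, it cannot be that both $G_l = C_l \cup abs_R(W)$ and $G_r = C_r \cup abs_R(W)$ are $4$-colorable, so at least one, say $G_l$, satisfies $\chi(G_l) \geq 5$. Both $G_l$ and $G_r$ are proper minors of $G$ (obtained by first $R$-contracting $W$, then deleting the opposite component), so by induction on $|V|$ applied to this theorem itself, $G_l$ contains a $K_5$ minor, and hence so does $G$.

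In the second case, where $W$ is consistent to no admissive $R$, I would prove the contrapositive: if both $W \cup C_l$ and $W \cup C_r$ are $4$-colorable, then $W$ is consistent to some admissive $R$, contradicting the hypothesis. Fix $4$-colorings $cl_l, cl_r$ of the two sides and let $R_l, R_r$ denote the partitions they induce on $W$ by color class; each is admissive because every color class is an independent set. On the $C_l$ side, contracting the $R_l$-classes together with suitable paths through $C_l$ (joining color classes that are not yet directly adjacent) realizes $abs_{R_l}(W)$ as a clique minor, since in a proper $4$-coloring that actually uses four colors every pair of classes must be linked by some edge; symmetrically for $R_r$ on the $C_r$ side. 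Next I would argue that either a color permutation identifies $R_l$ with $R_r$ on $W$---in which case Lemma~\ref{graph-with-cut-set-clique} glues the two colorings into a $4$-coloring of all of $G$, contradicting $\chi(G) = 5$---or some common admissive coarsening/refinement of $R_l$ and $R_r$ serves as a single $R$ witnessing consistency of $W$ on both sides simultaneously.

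The main obstacle is the last step of the second case: producing a single admissive $R$ that is realized as a clique minor in both $C_l \cup W$ and $C_r \cup W$ when the two sides' $4$-colorings induce genuinely unaligned partitions on $W$. This demands a delicate combinatorial argument about how minor actions inside each $C_i$ can be orchestrated to produce a prescribed clique structure on $W$, and it may require first normalizing the chosen $4$-colorings via the minimum-frequency-vector coloring $CL$ from the terminology section before extracting the consistency-witnessing $R$. A secondary technical point is handling the degenerate subcases where some color class of $W$ is empty or where one side uses strictly fewer than four colors on $W$, which needs a separate padding/extension argument before the main gluing step applies.
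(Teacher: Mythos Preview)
You have misparsed the logical structure of the statement. The hypothesis ``$W$ is a consistent cut set'' governs the \emph{entire} conclusion; the word ``otherwise'' introduces the second disjunct, not a second hypothesis. Read correctly, the theorem asserts: \emph{if $W$ is consistent (with respect to some admissive $R$), then either $G$ has a $K_5$ minor, or at least one of $G'_l,G'_r$ has chromatic number $\ge 5$.} Note that $abs(W)$ is only meaningful once an admissive $R$ has been fixed, so the second disjunct could not even be stated under a ``$W$ is not consistent'' hypothesis. Consequently your entire second case is addressing a claim the theorem does not make, and the delicate alignment argument you sketch there is unnecessary.

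Your first case also overshoots. The theorem does \emph{not} claim that a $K_5$ minor must exist whenever $W$ is consistent; it claims a disjunction. Your inductive step---``$G_l$ is a proper minor with $\chi(G_l)\ge 5$, so by induction $G_l$ contains a $K_5$ minor''---is invoking precisely the Hadwiger $k=5$ statement, which this theorem is a \emph{tool toward}, not a consequence of. Inducting on the present theorem alone gives you nothing about $G_l$ unless $G_l$ itself has a consistent cut set, which you have no reason to assume.

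The paper's actual argument is a two-line case split on $|abs_R(W)|$. By Definition~\ref{consistent-cut-set}, $abs_R(W)$ is a clique realizable as a minor inside each $C_i\cup W$. If $|abs_R(W)|\ge 4$, a $K_5$ minor is immediate. If $|abs_R(W)|\le 3$, suppose for contradiction that both $G'_l$ and $G'_r$ are $4$-colorable; then Lemma~\ref{graph-with-cut-set-which-can-be-clique} gives $\chi(G)\le 4$, contradicting the hypothesis. That is the whole proof: no induction, no second case on non-consistency.
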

\begin{proof}
By Definition~\ref{admissive-cut-set}, $abs_R(W)$ is a clique. If
$abs_{R}(W)$ is a $\geq 4$ clique, a $K_5$ minor can be constructed
immediately. If not, $abs_R(W)$ is a $i\leq 3$ clique. Suppose
$G'_l,G'_r$ both have chromatic numbers $\leq 4$, by
Lemma~\ref{graph-with-cut-set-which-can-be-clique}, $G$ has its
chromatic number $\leq 4$ which is a contradiction with assumption.
Hence, at least one of graphs $G'_{l}, G'_{r}$ has its chromatic
number $\geq 5$.
\qed
\end{proof}

Theorem~\ref{no-independent-set-cut-vertices} can be generalized to
different $k$ of Hadwiger's conjecture.

\begin{corollary}\label{removing-one-vertex-to-be-3-connected}
Assume $G=(V,E)$ with $\chi{X}(G)=5$. If $G$ is not at
least $4$-connected, then $G$ can be reduced by minor actions to be a
smaller graph with chromatic number $\geq 5$.
\end{corollary}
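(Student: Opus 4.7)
The plan is to unpack the failure of $4$-connectivity and then hand the resulting cut set directly to Theorem~\ref{no-independent-set-cut-vertices}. If $G$ is not $4$-connected, then by definition there exists $W\subseteq V$ with $|W|\leq 3$ such that $G\setminus W$ is disconnected; I would fix such a $W$ and write $G\setminus W$ as the disjoint union of two nonempty subgraphs $C_l$ and $C_r$. The rest of the argument is a case split on whether $W$ is a consistent cut set of $G$ for some admissive equivalence relation $R$ in the sense of Definition~\ref{consistent-cut-set}.

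In the non-consistent case, I would apply Theorem~\ref{no-independent-set-cut-vertices} with $R$ taken to be the trivial admissive relation (each vertex of $W$ its own class). The theorem then yields that at least one of $G'_l = abs_R(W)\cup C_l$ or $G'_r = abs_R(W)\cup C_r$ has chromatic number $\geq 5$. Each of $G'_l,G'_r$ is obtained from $G$ by the minor action of deleting the vertices on the opposite side, and each is strictly smaller than $G$ because both $C_l$ and $C_r$ are nonempty; this is exactly the reduction the corollary asserts. In the consistent case, the theorem instead produces a $K_5$ minor of $G$, which by a standard sequence of contractions and deletions reduces $G$ to $K_5$, a smaller graph of chromatic number $5$. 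The reduction is genuine: any chromatic-$5$ graph on $\leq 5$ vertices must be $K_5$ itself, but $K_5$ is $4$-connected and therefore cannot satisfy the hypothesis of the corollary, so $|V(G)|>5$ here.

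The step I expect to be the main obstacle is the consistent case: the theorem asserts existence of a $K_5$ minor, but I still have to package that existence as an explicit sequence of minor actions taking $G$ to a strictly smaller chromatic-$\geq 5$ graph rather than merely exhibiting a $K_5$ inside $G$. A secondary subtlety is justifying the appeal to the trivial admissive relation in the non-consistent case — this reduces to the observation that if $W$ fails to be consistent for every admissible $R$, then it fails in particular for the trivial $R$, so the negative branch of Theorem~\ref{no-independent-set-cut-vertices} is genuinely available. With those two checks in place, the corollary follows immediately from the theorem.
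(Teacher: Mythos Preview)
Your case split rests on a misreading of Theorem~\ref{no-independent-set-cut-vertices}. The word ``otherwise'' in that theorem does not separate the consistent from the inconsistent case; it separates, \emph{within} the consistent case, the subcase $|abs_R(W)|\geq 4$ (which yields a $K_5$ minor) from the subcase $|abs_R(W)|\leq 3$ (which yields a smaller chromatic-$\geq 5$ graph). The proof of the theorem uses consistency of $W$ in both branches: it invokes Definition~\ref{consistent-cut-set} to get that $abs_R(W)$ is a clique, and it invokes Lemma~\ref{graph-with-cut-set-which-can-be-clique}, whose hypothesis is precisely that $W$ is consistent. So in your ``non-consistent'' branch the theorem gives you nothing at all, and your appeal to the trivial relation $R$ does not rescue this. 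Moreover, the graphs $G'_l,G'_r$ in the theorem are $abs_R(W)\cup C_l$ and $abs_R(W)\cup C_r$ with the clique structure on $abs_R(W)$; they are not in general obtained from $G$ by merely deleting the opposite side, so your description of them as minors of $G$ via vertex deletion is also off.

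The paper's argument is different in substance: it does not case-split on consistency but instead \emph{proves} that any minimum cut set $W$ with $|W|\leq 3$ is consistent for a suitable admissive relation $R$. The work is in exhibiting, inside each of $C_l\cup W$ and $C_r\cup W$, the paths needed to contract $W$ down to a clique $abs_R(W)$; this uses minimality of $W$ to guarantee enough internally disjoint paths from a vertex of $C_l$ (resp.\ $C_r$) to the vertices of $W$. Once consistency is established and since $|abs_R(W)|\leq |W|\leq 3$, the theorem lands directly in its second branch and hands you the smaller chromatic-$\geq 5$ graph. Your proposal skips exactly this construction, which is the actual content of the corollary.
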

\begin{proof}
By the proof of Theorem~\ref{no-independent-set-cut-vertices}, we only need to
show a cut set in $G$ whose cardinality $\leq 3$ is admissive.
Assume in $G$ there is a minimum cut set $W$ with $|W|\leq 3$. It is
easy to see that there always exists an admissive equivalence
relation$R$ on $W$ with $abs_{R}(W)$ as a clique. Assume $C_l,C_r$
are two subgraphs of $G'=G\setminus W$ which are disconnected from
each other. From simple analysis of cases depending on $G\cap W$, by
doing minor actions on $C_l$, $abs_{R}(W)$ can be achieved; similarly for
$C_r$. One case of $|W|=3$ is displayed in Figure~\ref{fig-1}.$a$,
in which $W=\{w_1,w_2,w_3\}$, and say $u\in C_l$. Because $W$ is a
minimum cut set in $G$, $u$ can connect with $W$ along independent three paths
shown as dash-lines in the figure. There are two ways to define
admissive relations on $W$ and either one is sufficient to prove our
conclusion:
\begin{enumerate}
\item $R=\{\{w_1,w_3\},\{w_2\}\}$ 
By applying minor actions, $w_1,w_3$ can be contracted along pathes
$P_{w_1,u}$ and $P_{u,w_3}$ to be vertex $w'$, then we get the
$abs_{R}(W)$ as a $2$-clique.
\item $R=\{\{w_1\},\{w_2\},\{w_3\}\}$.
Simply by contracting $\{w1,u\}$, we can get the $abs_{R}(W)$ as a $3$-clique.
\end{enumerate}
All other cases can be analyzed similarly. Hence $W$ is a admissive
cut set of $W$.
\qed
\end{proof}

\begin{corollary}\label{cor-minimum-degree-5}
In a $5$-chromatic graph $G=(V,E)$, if $G$ has
minimum vertex degree $< 5$ then either $G$ has a $K_5$ minor or
$G$ can be reduced by minor actions to be a smaller graph with a chromatic number $\geq 5$.
\end{corollary}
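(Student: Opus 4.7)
The plan is to pick a vertex $v\in V$ with $\deg(v)<5$ and split on $\deg(v)$. In the easy subcase $\deg(v)\leq 3$, I claim $G\setminus v$ itself is the required smaller graph of chromatic number $\geq 5$: were $\chi(G\setminus v)\leq 4$, the $\leq 3$ colors already used on $N(v)$ in a $4$-coloring of $G\setminus v$ would leave an unused color to assign to $v$, extending to a $4$-coloring of $G$ and contradicting $\chi(G)=5$. So deletion of $v$ is already the desired minor reduction.

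So I may assume $\deg(v)=4$. First invoke Corollary~\ref{removing-one-vertex-to-be-3-connected} to reduce to the case that $G$ is $4$-connected, since otherwise a cut set of size $\leq 3$ already yields a reduction. Next, if $\chi(G\setminus v)\geq 5$ then $G\setminus v$ is the desired smaller graph, so I assume $\chi(G\setminus v)=4$. Repeating the extension argument from the proof of Lemma~\ref{v's-neighbors}, the four vertices $N(v)$ form a kernel set of $G\setminus v$: every $4$-coloring of $G\setminus v$ must use all four colors on $N(v)$, for otherwise a free color could be assigned to $v$ to $4$-color $G$.

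Finally, split on whether $G[N(v)]=K_4$. If yes, then $\{v\}\cup N(v)$ induces a $K_5$ subgraph of $G$, producing a $K_5$ minor directly. If no, there are non-adjacent $v_i,v_j\in N(v)$, and I appeal to the kernel-vertex form of Dirac's theorem (the later Theorem~\ref{Dirac-k-4-theorem}): because $N(v)$ is a kernel set of the chromatic number $4$ graph $G\setminus v$, there is a $K_4$ minor on $N(v)$ inside $G\setminus v$, and adjoining $v$ back yields the required $K_5$ minor in $G$.

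The hard part is exactly this last sub-case. The classical Dirac statement from~\cite{Dirac1952} only guarantees a $K_4$ minor somewhere in a chromatic number $4$ graph, whereas here the minor must live on the specified four vertices $N(v)$; the $4$-connectedness of $G$ forced above is what ensures enough internally disjoint paths in $G\setminus v$ to route the path-contractions realizing the $K_4$ minor through $N(v)$ rather than elsewhere. Once that kernel-vertex refinement of Dirac's theorem is in hand, the rest of the argument is just the routine case combination sketched above.
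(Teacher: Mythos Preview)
Your argument is fine up through the split on whether $G[N(v)]$ is a clique, and it matches the paper there. The gap is in the final sub-case, where $\deg(v)=4$ and $G[N(v)]\neq K_4$.

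You invoke ``the kernel-vertex form of Dirac's theorem'' and cite Theorem~\ref{Dirac-k-4-theorem}. But Theorem~\ref{Dirac-k-4-theorem} is only the classical statement that a chromatic number~$4$ graph contains \emph{some} $K_4$ minor; it says nothing about locating that minor on a prescribed kernel set. The statement you actually need --- that in a $3$-connected chromatic number~$4$ graph a $K_4$ minor can be found on any kernel set $U$ --- is precisely Conjecture~\ref{conjecture} of this paper, which is left open and is shown in Theorem~\ref{equivalence-of-4-color-theorem} to be \emph{equivalent} to the $k=5$ case of Hadwiger's conjecture. So your last step assumes something as strong as the Four Color Theorem itself; the appeal to $3$-connectedness of $G\setminus v$ does not by itself suffice (indeed, Theorem~\ref{simple-cycle-in-3-connected} shows that when no $K_4$ minor sits on $U$, the set $U$ merely lies on a simple cycle, which is not a contradiction with being kernel).

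The paper sidesteps this entirely by taking the \emph{reduction} disjunct rather than the $K_5$-minor disjunct in the non-clique case. Pick non-adjacent $v_1,v_2\in N(v)$ and define the admissive relation $R=\{\{v_1,v_2\},\{v_3\},\{v_4\}\}$ on $N(v)$. Contracting $v_1$ and $v_2$ through $v$ realizes $abs_R(N(v))$ by minor actions, giving a strictly smaller graph $G'$; by Observation~\ref{strengthened-graph} (any admissive contraction can only raise the chromatic number), $\chi(G')\geq 5$. That is an elementary one-line reduction and does not require locating any $K_4$ minor on $N(v)$.
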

\begin{proof} 
By Corollary~\ref{removing-one-vertex-to-be-3-connected}, $G$ is at
least $4$-connected. So we only need to discuss the case when there
is a vertex $v\in V$ with degree $4$, then in this case
$W=N(v)=(v_1,v_2,v_3,v_4)$ is a minimum cut set.

If $G\cap N(v)$ is a $4$-clique, then $v\cup N(v)$ is a $5$-clique
and hence a $K_5$ minor. Otherwise, if there is no $K_5$ minor in $G$,
then W.L.O.G. we can assume there is no edge $e(v_1,v_2)$ where
$v_1,v_2\in N(v)$. Define the equivalence relation $R$ on $N(v)$ as:
$\{\{v_1,v_2\},{v_3},{v_4}\}$. 

Then we have $|abs_{R}(W)|<4$ and by applying minor contractions
only on $v\cup N(v)$, $abs_{R}(W)$ can be achieved. Suppose
$G'=\{G\setminus \{v\cup N(v)\}\}\cup abs_{R}(W)$ has a chromatic
number $<5$, then there is a color assignment $cl'$ of $G'$ using
$<5$ colors. Since $abs_{R}(W)$ can be colored with $<4$ colors,
hence $cl'$ can be extended to be a color assignment of graph
$G_{R}$ using $<5$ colors. By Observation~\ref{strengthened-graph},
we can have a color assignment for $G$ using $<5$ colors which is a
contradiction. Hence the chromatic number of $G'$ has to be $\geq
5$.
\qed
\end{proof}

\begin{figure}[t]
{\scriptsize
\begin{center}
\setlength{\unitlength}{0.7pt}
\begin{picture}(400,80)(-200,-35)
\put(-170,55){\oval(14,14)} \put(-170,55){\makebox(0,0){$w_1$}}
\put(-120,55){\oval(14,14)} \put(-120,55){\makebox(0,0){$w_2$}}
\put(-70,55){\oval(14,14)} \put(-70,55){\makebox(0,0){$w_3$}}
\put(-120,-5){\oval(14,14)} \put(-120,-5){\makebox(0,0){$u$}}
\put(-163,55){\line(1,0){36}} \put(-113,55){\line(1,0){36}}
\put(-120,-5){\dashline{3}(-5,5)(-44,56)}
\put(-120,-5){\dashline{3}(0,7)(0,53)}
\put(-120,-5){\dashline{3}(5,5)(44,56)}
\put(-120,-40){\makebox(0,0){(a)}}
\put(-20,55){\oval(14,14)} \put(-20,55){\makebox(0,0){ $w_1$ }}
\put(10,55){\oval(14,14)} \put(10,55){\makebox(0,0){ $w_2$ }}
\put(40,55){\oval(14,14)} \put(40,55){\makebox(0,0){ $w_3$ }}
\put(70,55){\oval(14,14)} \put(70,55){\makebox(0,0){ $w_4$ }}
\put(100,55){\oval(14,14)} \put(100,55){\makebox(0,0){ $w_5$ }}
\put(130,55){\oval(14,14)} \put(132,55){\makebox(0,0){$w_6$ }}
\put(55,20){\oval(14,14)} \put(55,20){\makebox(0,0){$u_1$}}
\put(55,-15){\oval(14,14)} \put(55,-15){\makebox(0,0){$u_2$}}
\put(-13,55){\line(1,0){17}} \put(107,55){\line(1,0){16}}
\put(55,15){\dashline{2}(-7,7)(-40,33)}
\put(55,15){\dashline{2}(-7,7)(-16,33)}
\put(55,15){\dashline{2}(7,7)(16,33)}
\put(55,15){\dashline{2}(7,7)(40,33)}
\put(55,-15){\dashline{2}(-7,7)(-75,62)}
\put(55,-15){\dashline{2}(7,7)(76,62)}
\put(56,-40){\makebox(0,0){(b)}}
\end{picture}
\end{center}
} \vspace*{1mm} \caption{Examples of Reductions According to Certain
Admissive Relation} \label{fig-1}
\end{figure}

Results from Dirac similar to
Corollary~\ref{removing-one-vertex-to-be-3-connected} and
Corollary~\ref{cor-minimum-degree-5} are collected in~\cite{Ore1967}
by working on critical graph.
In fact our conclusions are different.
For example, in Figure~\ref{fig-1}.$b$, assume
$W=\{w_1,w_2,w_3,w_4,w_5,w_6\}$ is a cut set and $u_1,u_2$ are in
one component $C_l$ in $G'=G\setminus W$. With results
of~\cite{Ore1967}, this graph can not be contracted to be a smaller
graph with a chromatic number $\geq 5$. But we can define an admissive
relation $R$ on $W$ by putting
$R=\{\{w_1,w_6\},\{w_2,w_3,w_4,w_5\}\}$, then by contracting paths
passing through $u_1$ and $u_2$ displayed in dash-line in
Figure~\ref{fig-1}.$b$, $abs_{R}(W)$ can be achieved as a
$2$-clique. If we can do the same thing in $G'\setminus C_l$, then
by Theorem~\ref{no-independent-set-cut-vertices} $G$ can be
contracted to be a smaller graph $G_R$ with chromatic number $\geq
5$. Further, we have that:

\begin{proposition}\label{minimum-degree-5-condition}
In a $5$-chromatic graph $G=(V,E)$, assume $v\in V$ has degree
$5$. If $N(v)$ is not a five-sided polygon, then $G$ has a $K_5$
minor or $G$ can be reduced by applying minor actions to be a
smaller graph with chromatic number $\geq 5$.
\end{proposition}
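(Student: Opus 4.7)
The plan is to combine the reductions already available from Corollary~\ref{removing-one-vertex-to-be-3-connected} and Corollary~\ref{cor-minimum-degree-5} with a case analysis on the induced subgraph $H$ of $G$ on the five vertices of $N(v) = \{v_1, v_2, v_3, v_4, v_5\}$. By those two corollaries, I may assume throughout that $G$ is $4$-connected with minimum degree $\geq 5$, because otherwise $G$ is already known either to have a $K_5$ minor or to be reducible, which matches one of the conclusions in the statement. The argument then splits on the largest independent set inside $H$: if $H$ contains an independent set of size $3$, a reduction analogous to Corollary~\ref{cor-minimum-degree-5} succeeds; if the largest independent set in $H$ has size at most $2$, then $H$ is either isomorphic to $C_5$ (the claimed five-sided polygon) or contains a triangle that together with $v$ forms a $K_4$ which I will extend to a $K_5$ minor.

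\textbf{The reduction case.}
Let $\{v_a, v_b, v_c\} \subseteq N(v)$ be an independent set in $H$, and let $\{v_d, v_e\}$ be the remaining two neighbors. Define the admissive equivalence relation $R = \{\{v_a, v_b, v_c\}, \{v_d\}, \{v_e\}\}$ on $N(v)$. Setting $G' = (G \setminus (\{v\} \cup N(v))) \cup abs_R(N(v))$, I exhibit $G'$ as a minor of $G$ by taking the branch sets $\{v, v_a, v_b, v_c\}$ (connected through the star at $v$), $\{v_d\}$, $\{v_e\}$, and every vertex of $V \setminus (\{v\} \cup N(v))$ as a singleton. Because $|abs_R(N(v))| = 3$, any $4$-coloring of $G'$ uses at most three colors on its super-vertex, $v_d$, and $v_e$. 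I lift such a coloring back to $G$ by assigning $v_a, v_b, v_c$ the super-vertex's color (legal because they are pairwise non-adjacent), copying the colors of $v_d, v_e$ and of the remaining vertices, and finally giving $v$ a fourth color not appearing in $N(v)$, which exists since $N(v)$ uses at most three colors. This contradicts $\chi(G) = 5$, so $\chi(G') \geq 5$ and $G$ is reduced, exactly as in the proof of Corollary~\ref{cor-minimum-degree-5}.

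\textbf{The small-independence case.}
Now assume every three vertices in $N(v)$ span at least one edge of $H$. If $H$ is moreover triangle-free, then $H$ has no $K_3$ and no independent triple on five vertices, which forces $H \cong C_5$ and gives the proposition's first conclusion. Otherwise $H$ contains a triangle $T = \{v_1, v_2, v_3\}$; then $K = \{v\} \cup T$ induces a $K_4$ in $G$. I may also assume $H$ has no $K_4$ (else $v$ together with it is already a $K_5$), so each of $v_4, v_5$ misses at least one vertex of $T$. The goal is to produce a fifth branch set $B \subseteq V \setminus K$ that is connected, contains $v_4$ or $v_5$ (for adjacency to $v$), and is adjacent to each of $v_1, v_2, v_3$; together with the four singleton branch sets $\{v\}, \{v_1\}, \{v_2\}, \{v_3\}$ this realizes a $K_5$ minor. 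If $K$ is a cut set of $G$, then since the subgraph on $K$ is already $K_4$ the set $K$ is a consistent cut set with the trivial relation, and Theorem~\ref{no-independent-set-cut-vertices} delivers the $K_5$ minor directly. Otherwise $G \setminus K$ is connected, and the hypothesis that no three vertices of $N(v)$ are pairwise non-adjacent limits the possible edge-patterns between $\{v_4, v_5\}$ and $T$ to a short list; in each pattern $B$ can be assembled from $\{v_4, v_5\}$, possibly enlarged by a contracted path in $G \setminus K$ ending at an external neighbor of whichever vertex of $T$ is missed by both $v_4$ and $v_5$. Such external neighbors exist because the minimum degree is $\geq 5$, and the connecting path exists because $G \setminus K$ is connected.

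\textbf{Main obstacle.}
The delicate step is the final subcase, where $H$ has no $K_4$, no independent $3$-set, and is not $C_5$: one must enumerate how $v_4$ and $v_5$ attach to $T$ under the ``no independent $3$-set'' constraint, and in each configuration verify that the fifth branch set $B$ can actually be built inside $V \setminus K$. The regime where the direct $B$-construction can fail is precisely when $K$ separates $G$, and that is exactly where the cut-set criterion (Theorem~\ref{no-independent-set-cut-vertices}) takes over; cleanly stitching together the ``$G \setminus K$ connected'' explicit construction with the ``$K$ is a cut set'' application is the main technical burden of the proof.
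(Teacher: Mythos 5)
Your reduction case (independent $3$-set in $N(v)$) and your triangle-free case ($H\cong C_5$) coincide with what the paper does: it simply invokes the proof of Corollary~\ref{cor-minimum-degree-5} for the former and Ramsey-style exclusion for the latter. The divergence, and the problem, is in the triangle case. The paper handles it in one step: by Corollary~\ref{removing-one-vertex-to-be-3-connected} the graph $G'=G\setminus v$ is $3$-connected, so by the fan version of Menger's theorem there are three paths from $v_4$ to the triangle $\{v_1,v_2,v_3\}$ that are internally disjoint from each other and from the triangle, and none of them uses $v$. Contracting the internal vertices of these paths into $v_4$ gives a $K_4$ minor on $\{v_1,v_2,v_3,v_4\}$ inside $G'$; since $v$ is adjacent to a vertex of each of the four branch sets, adding $\{v\}$ yields a $K_5$ minor. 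No case analysis on the adjacency pattern of $v_4,v_5$ to the triangle is needed, and no distinction between ``$K=\{v\}\cup T$ is a cut set'' and ``$G\setminus K$ is connected'' arises.

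Your version of the triangle case has a genuine gap: the step you yourself label the ``main obstacle'' --- enumerating the edge-patterns between $\{v_4,v_5\}$ and $T$ and, in each, exhibiting the connected fifth branch set $B\subseteq V\setminus K$ adjacent to all of $v_1,v_2,v_3$ and containing a neighbor of $v$ --- is never carried out. This is where all the content lives. For instance, if $v_4$ and $v_5$ are both adjacent only to $v_1$ among $T$ (a configuration not excluded by the no-independent-triple hypothesis, since $v_2v_3$ is already an edge), $B$ must be grown through $G\setminus K$ to reach external neighbors of both $v_2$ and $v_3$; this can be made to work using minimum degree $\geq 5$ and connectivity, but it must actually be verified pattern by pattern, and you have not done so. The cleaner repair is to abandon the explicit branch-set construction entirely and use the three-fan from $v_4$ in the $3$-connected graph $G\setminus v$, as the paper does; that argument is uniform over all adjacency patterns and makes the appeal to Theorem~\ref{no-independent-set-cut-vertices} in the cut-set subcase unnecessary.
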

\begin{proof}
Assume $N(v)=\{v_1,v_2,v_3,v_4,v_5\}$ and $G$ can not be reduced. By
Corollary~\ref{removing-one-vertex-to-be-3-connected}, $G'=G\setminus
v$ is at least $3$ connected. First assume there is a triangle on $N(v)$,
say on $\{v_1,v_2,v_3\}$, then from $v_4$(or $v_5$) to the
triangle there are at least $3$ disjoint paths. Hence on
$\{v_1,v_2,v_3,v_4\}$ there is a $K_4$ minor without using $v$, then
combined with $v$, there is a $K_5$ minor. Now assume that there is no triangle
on $N(v)$. If there are three independent vertices in $N(v)$, say $\{v_1,v_2,v_3\}$, we
may define the equivalence relation $R=\{\{\gamma_1,\gamma_2,\gamma_3\},\gamma_4,\gamma_5\}$.
Following the proof of Corollary~\ref{cor-minimum-degree-5}, we can see that
$G$ can be reduced, a contradiction. Therefore the subgraph on $N(v)$ can only be a five-sided polygon.

\qed
\end{proof}

From the reduction of Proposition~\ref{minimum-degree-5-condition}, it does not change
the planarity of a graph. Hence Proposition~\ref{minimum-degree-5-condition} can be
used to prove the Five Color Theorem in
a different way by using the fact that a planar graph has its minimum degree $\leq 5$ and has no
$K_5$ minor.


\begin{theorem}\label{G-prime-is-3-connected}
Given a $5$-chromatic graph $G=(V,E)$, if $G$ can not be reduced by applying minor
actions to be a smaller graph with a chromatic number $\geq 5$, then
there is a vertex $v\in V$, such that $G'=G\setminus v$ is 3-connected and $N(v)$ is a set of kernel vertices of $G'$.
\end{theorem}
\begin{proof}
If $G$ can not be reduced by applying minor actions to be a smaller
graph with a chromatic number $\geq 5$, then $|V_5|=1$ (otherwise remove vertices from $V_5$)
and we can choose vertex
$v\in V_5$. By Lemma~\ref{v's-neighbors} $N(v)$ is a set of kernel
vertices of $G'$. By
Corollary~\ref{removing-one-vertex-to-be-3-connected}, $G'$ must be
at least $3$-connected.
\qed
\end{proof}


\section{A Restatement of Hadwiger's Conjecture}\label{sec-conjecture}

In this section we conceive a new conjecture and we prove it is
equivalent with the case $k=5$ of Hadwiger's conjecture.

\begin{conjecture}\label{conjecture}
In a $3$-connected chromatic number $4$ and $K_5$ minor free graph
$G(V,E)$, if $U$ is a set of kernel vertices of $G$, then there is a
$K_4$ minor on some extension $Ex(U)$.
\end{conjecture}

Hadwiger's conjecture when $k=5$ claims the existence of $K_5$ minor
in a $5$-chromatic graph. If a graph $G$ has a subgraph
containing a $K_5$ minor, then trivially there is a $K_5$ minor in
original graph $G$. Below without explicit explanation it is assumed
that the discussed graph $G(V,E)$ has all its subgraphs are $K_5$
minor free to eliminate this trivial case.

For convenience in rest of this paper, without explicit explanation,
we assume a given chromatic number $k$ graph can not be reduced to
be a chromatic number $\geq k$ graph by applying minor actions. The
assumption is reasonable, because if not we can work on the smaller
graph.
\begin{theorem}\label{equivalence-of-4-color-theorem}
In an irreducible $5$-chromatic graph $G$, there exists a $K_5$ minor in $G$
if and only if Conjecture~\ref{conjecture} is true for $G$.
\end{theorem}
\begin{proof}
We need to prove that if Conjecture~\ref{conjecture} is true for $G$,
then $G$ has a $K_5$ minor. The reverse can
be proved similarly. Assume Conjecture~\ref{conjecture} is true for $G$.
By Theorem~\ref{G-prime-is-3-connected},
there is a vertex $v\in V$ satisfying $G'=G\setminus v$ is
3-connected and $N(v)$ is a set of kernel vertices in $G'$. By
Conjecture~\ref{conjecture}, there is a $K_4$ minor on some $Ex(N(v))=N(v)$ in
$G'$.

Next we show that the $K_4$ minor on $Ex(N(v))$ can be
combined with $v$ to be a $K_5$ minor in $G$. This can be done by
applying minor actions in graph $G$ so that $Ex(N(v))$ can be
achieved as neighbors of $v$.

From Definition~\ref{extending}, the only non-trivial case comes
from applying the action of deleting a vertex $v_1\in Ex(N(v))$.
Since at beginning it is initialized that $Ex(N(v))=N(v)$, i.e.
trivially $Ex(N(v))$ are neighbors of $v$ in $G$. When we delete a
vertex $v_1\in Ex(N(v))$ in $G'$, correspondingly in graph $G$ we
contract this vertex $v_1$ with $v$, hence $N(v_1)\setminus v$
become neighbors of $v$. By Definition~\ref{extending}, after
deleting $v_1$ in $G'$ the new $Ex(N(v))$ is updated to be
$\{Ex(N(v))\setminus v_1\} \cup N(v_1)$ which
are $v$'s new neighbors in graph $G$. Hence, the $K_4$ minor on
$Ex(N(v))$ can be combined with $v$ to be a $K_5$ minor.
\qed
\end{proof}

It is easy to see that, Theorem~\ref{equivalence-of-4-color-theorem} can
be generalized to give a new statement of Hadwiger's conjecture as
below:

\begin{corollary}\label{cor-new-statement-of-hadwiger-conjecture}
Hadwiger's conjecture when $k=x$ is correct if and only if in a
chromatic number $(x-1)$ graph, there is a $K_{x-1}$ minor on an extension of
its kernel vertices.
\end{corollary}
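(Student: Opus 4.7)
The plan is to mimic the argument of Theorem~\ref{equivalence-of-4-color-theorem}, observing that every lemma used there (Lemma~\ref{reduce-v-5}, Lemma~\ref{v's-neighbors}, Corollary~\ref{removing-one-vertex-to-be-3-connected}, Corollary~\ref{cor-minimum-degree-5}, Theorem~\ref{G-prime-is-3-connected}) was phrased around a ``top color'' $5$ and a ``one-below'' chromatic number $4$, but the arguments never really use the value $5$; the same reductions and cut-set arguments go through with $x$ and $x-1$ in place of $5$ and $4$. So the first step is to state and verify the generalized versions: in a chromatic number $x$ graph $G$ that cannot be reduced by minor actions to a smaller chromatic number $\geq x$ graph, there is a vertex $v$ such that $|V_x|=1$ and $V_x=\{v\}$ in a minimum coloring, $G'=G\setminus v$ has chromatic number $x-1$, and $N(v)$ is a set of kernel vertices of $G'$.

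For the ``$\Leftarrow$'' direction (assuming the statement about chromatic number $x-1$ graphs implies Hadwiger for $k=x$), I would take an arbitrary chromatic number $x$ graph $G$, reduce if possible, and apply the generalized Theorem~\ref{G-prime-is-3-connected} to get the vertex $v$ described above. By the hypothesis, $G'=G\setminus v$ contains a $K_{x-1}$ minor on $Ex(N(v))$. Now I copy the lifting argument of Theorem~\ref{equivalence-of-4-color-theorem}: whenever the construction of this minor in $G'$ deletes a vertex $v_1\in Ex(N(v))$, I instead contract $v_1$ with $v$ in $G$; by Definition~\ref{extending} this exactly matches the update $Ex(N(v))\leftarrow \{Ex(N(v))\setminus v_1\}\cup N(v_1)$, so at every stage $Ex(N(v))\subseteq N(v)$ in the evolving graph. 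Consequently the $K_{x-1}$ minor on $Ex(N(v))$ in $G'$ becomes, together with $v$, a $K_x$ minor in $G$.

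For the ``$\Rightarrow$'' direction (Hadwiger for $k=x$ implies the kernel-minor statement), let $G$ be a chromatic number $x-1$ graph with kernel vertex set $U$. Form $G^{+}=G\cup\{v\}$ with $v$ adjacent to every vertex of $U$. By Definition~\ref{kernel-vertices}, every $(x-1)$-coloring of $G$ uses all $x-1$ colors on $U$, so in any coloring of $G^{+}$ the vertex $v$ needs a new $x$-th color; hence $G^{+}$ has chromatic number $x$. Hadwiger's conjecture for $k=x$ then gives a $K_x$ minor in $G^{+}$. Because $v$ is attached only to $U$, after possibly redirecting contractions as in the previous paragraph, I can arrange that $v$ alone forms one branch set of the $K_x$; then its $x-1$ remaining branch sets lie in $G$, touch $U=N(v)$, and realise a $K_{x-1}$ minor on $Ex(U)$.

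The main obstacle I expect is the ``$\Rightarrow$'' direction: the $K_x$ minor produced by Hadwiger in $G^{+}$ is an unstructured object, and I must show the branch set containing $v$ can be shrunk down to $\{v\}$ without losing the minor. This requires a short but careful argument that any contraction pulling $v$ together with some $u\neq v$ in the branch set can be replaced by a contraction internal to $G$ using an edge incident to $u$ in $G^{+}\setminus v$, exploiting the fact that $v$'s only neighbors are in $U$, so that every other branch set still meets $U$ and hence $Ex(U)$. Once this is handled, the rest is bookkeeping and a direct application of Observation~\ref{cover-back} and Definition~\ref{extending}.
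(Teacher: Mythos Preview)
Your plan is the paper's plan: the corollary is stated immediately after Theorem~\ref{equivalence-of-4-color-theorem} with nothing more than ``Easy to see that, Theorem~\ref{equivalence-of-4-color-theorem} can be generalized,'' and the paper's proof of the reverse direction of that theorem is itself only ``The reverse can be proved similarly.'' So you are already giving far more detail than the paper does, and your $\Leftarrow$ direction is exactly the intended generalization.

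Your $\Rightarrow$ direction, however, has a real gap. After forming $G^{+}=G\cup\{v\}$ with $N(v)=U$ and invoking Hadwiger for $k=x$, nothing forces the resulting $K_x$ minor to use $v$ at all: a graph with $\chi(G)=x-1$ may perfectly well contain a $K_x$ minor lying entirely inside $G$ and disjoint from $U$, and then there is no contraction through $v$ to ``redirect.'' In that case you obtain no $K_{x-1}$ minor on $U$. This is precisely why Conjecture~\ref{conjecture}, of which the corollary is meant to be the general form, carries the extra hypothesis ``$K_5$ minor free''; the corollary's wording silently drops the analogous ``$K_x$ minor free'' assumption, and your argument exposes why it is needed. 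Even after reinstating that hypothesis (so $v$ must lie in some branch set $B_1$), your claim that the remaining branch sets $B_2,\dots,B_x$ each meet $U$ is not automatic: the edge from $B_i$ to $B_1$ may land on a vertex of $B_1\setminus\{v\}$ outside $U$. To close this you must actually run the $Ex(\cdot)$ machinery---mirror each contraction of an edge $vu$ inside $B_1$ in $G^{+}$ by deleting $u$ in $G$ and updating $Ex(U)\leftarrow(Ex(U)\setminus\{u\})\cup N_G(u)$ per Definition~\ref{extending}---so that at the end every $B_i$ meets the final $Ex(U)$, not $U$ itself. That is the missing bookkeeping, and it only goes through with the $K_x$-minor-free hypothesis in place.
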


"the new theorem"

\begin{theorem}
For a chromatic number $k=x$ graph $G$, if it has a $k_x$ minor, then it is $x$-connected, otherwise it can be reduced by minor
action to be a smaller graph $G'$, and $G'$ has a $k_x$ minor if and only if $G$ has a $k_x$ minor.
\end{theorem}
\begin{proof}

\end{proof}

In order to show the new statement of Hadwiger's conjecture is useful,
at first we use it to prove some new properties, and then we give a
quite simple proof to Hadwiger's conjecture when $k=4$.

\begin{lemma}\label{k-3-minor-on-3-vertices}
If a graph $G(V,E)$ is 2-connected, then there is a $K_3$ minor on any
arbitrary three vertices.
\end{lemma}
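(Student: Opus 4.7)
The plan is to construct the three branch sets $B_u, B_v, B_w$ explicitly, using the $2$-connectedness of $G$ via Menger's theorem twice. First, I would apply Menger's theorem to obtain two internally vertex-disjoint $uv$-paths $P_1, P_2$, whose union $C = P_1 \cup P_2$ is a cycle containing $u$ and $v$. If $w$ already lies on $C$, then $u, v, w$ partition $C$ into three internally disjoint arcs; assigning each arc (together with its relevant endpoint among $\{u, v, w\}$) to the corresponding branch set and observing that consecutive arcs share a cycle edge gives the desired $K_3$ minor immediately.

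If $w \notin V(C)$, the plan is to apply the fan version of Menger's theorem to find two internally disjoint paths $R_1, R_2$ from $w$ into $V(C)$, terminating at distinct vertices $x_1, x_2 \in V(C)$ and otherwise avoiding $V(C)$. These two attachment points split $C$ into two arcs, and the construction branches on how $\{u, v\}$ sits relative to $\{x_1, x_2\}$. When one arc is free of both $u$ and $v$, that entire free arc can be absorbed into $B_w := \{w\} \cup \mathrm{int}(R_1) \cup \mathrm{int}(R_2) \cup V(\text{free arc})$, and then cutting the other arc at an edge between $u$ and $v$ produces $B_u$ and $B_v$; all three inter-set edges then come from the cycle.

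The subtle sub-case, which I expect to be the main obstacle, is when each arc contains exactly one of $u, v$: placing both $x_1$ and $x_2$ inside $B_w$ would leave $B_u$ and $B_v$ on opposite sides of the $2$-separator $\{x_1, x_2\}$ within $C$ and thus with no cycle edge between them (this is exactly the $K_{2,3}$ phenomenon, where no cycle passes through the three degree-$2$ vertices even though a $K_3$ minor on them still exists). The resolution is to split $x_1, x_2$ between the $u$- and $v$-branch sets: take $B_u$ to be the arc through $u$ together with $x_1$ but not $x_2$, take $B_v$ to be the arc through $v$ together with $x_2$ but not $x_1$, and set $B_w = \{w\} \cup \mathrm{int}(R_1) \cup \mathrm{int}(R_2)$. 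The cycle edge from $x_1$ to its neighbor on the $v$-arc (equivalently, from $x_2$ to its neighbor on the $u$-arc) then supplies a $B_u$-$B_v$ edge, while $R_1$ and $R_2$ supply the $B_u$-$B_w$ and $B_v$-$B_w$ edges. A routine check that the three sets are pairwise disjoint and individually connected in each subcase completes the proof.
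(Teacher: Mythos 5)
Your proposal is correct, and it is both more complete and organized differently from the proof in the paper. The paper fixes $v_1$ and takes two paths $P_{1,2},P_{1,3}$ from $v_1$ to $v_2$ and $v_3$ sharing only $v_1$, plus a third path $P_{2,3}$ avoiding $v_1$, and then simply asserts that ``no matter how $P_{2,3}$ crosses with $P_{1,2},P_{1,3}$, there is one $K_3$ minor'' --- the crossing analysis, which is where all the work lies, is omitted. You instead build a cycle $C$ through two of the vertices and attach the third by a $2$-fan, which reduces the problem to a clean case split on where the attachment vertices $x_1,x_2$ fall relative to $u,v$ on $C$. The payoff of your route is that it isolates and resolves the one genuinely delicate configuration (the $K_{2,3}$-type situation where $\{x_1,x_2\}$ separates $u$ from $v$ on the cycle, so that no single cycle through all three vertices need exist) by splitting $x_1$ into $B_u$ and $x_2$ into $B_v$; this is exactly the case the paper's ``easy to see'' glosses over. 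The paper's approach, if completed, would need an equivalent analysis of how $P_{2,3}$ first and last meets $P_{1,2}\cup P_{1,3}$, so nothing is lost by your reorganization, and rigor is gained. One small point to tighten when writing it up: in the subcase analysis after the fan step, handle explicitly the degenerate positions where $u$ or $v$ coincides with $x_1$ or $x_2$ (your ``free arc'' and ``one in each arc'' dichotomy should be stated in terms of arc interiors so these boundary cases are not double-counted), but this is indeed routine.
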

\begin{proof}
Assume there are three vertices $\{v_1,v_2,v_3\}\subseteq V$,
because $G$ is 2-connected, then from $v_1$ to $v_2$ and from $v_1$
to $v_3$ there are two vertex disjoint paths $P_{1,2}, P_{1,3}$
respectively.

Similarly, because $G$ is 2-connected, from $v_2$ to $v_3$ there is a path $P_{2,3}$
which does not pass through $v_1$. It is easy to see no matter how
$P_{2,3}$ crosses with $P_{1,2},P_{1,3}$, there is a $K_3$ minor
on $v_1,v_2,v_3$.

\qed
\end{proof}

\begin{corollary}\label{cor-k-3-minor-on-3-vertices}
If graph $G(V,E)$ is 3-connected, then for any vertex $v\in V$
there is a $K_4$ minor in $v\cup N(v)$.
\end{corollary}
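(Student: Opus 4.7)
The plan is to reduce this corollary directly to the preceding \textsc{Lemma~\ref{k-3-minor-on-3-vertices}} by removing $v$ and operating inside $G\setminus v$. Since $v$ is already adjacent to every vertex in $N(v)$, once we exhibit a $K_3$ minor on three chosen neighbors of $v$ inside $G\setminus v$, attaching the singleton branch set $\{v\}$ to that minor gives the desired $K_4$ minor on $v\cup N(v)$.

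First I would observe that $G\setminus v$ is $2$-connected. This is the only connectivity fact we need and it follows immediately from $G$ being $3$-connected: removing any single vertex from a $3$-connected graph leaves a graph that remains connected after the deletion of any further vertex. Next, since $|N(v)|\geq 3$, I choose three distinct vertices $v_1,v_2,v_3\in N(v)\subseteq V(G\setminus v)$. Applying \textsc{Lemma~\ref{k-3-minor-on-3-vertices}} inside the $2$-connected graph $G\setminus v$ to the triple $\{v_1,v_2,v_3\}$ yields three pairwise disjoint connected branch sets $S_1,S_2,S_3$ with $v_i\in S_i$ whose contraction gives a $K_3$.

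To finish, I form the fourth branch set $S_4=\{v\}$. Because $v$ is adjacent in $G$ to $v_1,v_2,v_3$, and each $v_i\in S_i$, there is an edge from $S_4$ to each of $S_1,S_2,S_3$ after contraction. Together with the $K_3$ already produced on $\{S_1,S_2,S_3\}$, the four branch sets $\{S_1,S_2,S_3,S_4\}$ contract to $K_4$. Each branch set meets $v\cup N(v)$ (namely $v_i\in S_i$ for $i=1,2,3$ and $v\in S_4$), so by \textsc{Definition~\ref{minor-on-vertices}} this is a $K_4$ minor on $v\cup N(v)$, as required.

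I do not anticipate a genuine obstacle; the potential friction points are cosmetic. The one thing to be careful about is that the branch sets $S_1,S_2,S_3$ produced by \textsc{Lemma~\ref{k-3-minor-on-3-vertices}} live in $G\setminus v$ (so they do not accidentally absorb $v$), which is exactly why I apply the lemma in $G\setminus v$ rather than in $G$. With that, the argument is essentially a one-line reduction to the previous lemma and does not require any case analysis or use of the minimality/kernel machinery developed earlier in the paper.
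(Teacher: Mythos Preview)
Your proposal is correct and follows essentially the same argument as the paper: pass to $G\setminus v$, which is $2$-connected, invoke Lemma~\ref{k-3-minor-on-3-vertices} to obtain a $K_3$ minor on three vertices of $N(v)$, and then adjoin $\{v\}$ as the fourth branch set. The paper states this in two lines while you spell out the branch-set bookkeeping, but the content is identical.
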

\begin{proof}
Because $G$ is 3-connected, $N(v)\geq 3$ and $G'=G\setminus v$ is 2-connected. Then
by Lemma~\ref{k-3-minor-on-3-vertices}, there is a $K_3$ minor on
$N(v)$. Hence, there is a $K_4$ minor in $v\cup N(v)$.
\qed
\end{proof}

For convenience below we use $@$ to represent a simple cycle.
Without confusion $@$ also represents the vertices on the cycle.
\begin{lemma}\label{k-3-minor-on-more-than-one-cycle}
If a 3-connected graph $G(V,E)$ has a vertex set $U\subseteq V$ and the sub-graph
$U\cap G$ includes a simple cycle $@$ such that $U\setminus @\not
=\emptyset$, then there is a $K_4$ on $U$.
\end{lemma}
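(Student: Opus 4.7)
The plan is to build the four branch sets of the desired $K_4$ minor from three internally disjoint paths that a fourth vertex $w \in U \setminus @$ sends to the cycle, and then absorb the three arcs of $@$ into three of the branch sets.

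First, pick any $w \in U \setminus @$, which exists by hypothesis. Since $G$ is 3-connected and $|@| \ge 3$, a fan-type Menger argument applied to $w$ and $V(@)$ yields three paths $P_1, P_2, P_3$ from $w$ that are pairwise internally disjoint, each of which meets $@$ only at its terminal vertex, and whose terminal vertices $a, b, c$ on $@$ are distinct. (If a path met $@$ earlier, truncate it at the first cycle vertex; one then argues by choosing a maximum-size family of such truncated, internally disjoint paths that three endpoints can be secured. This is the only slightly delicate step, and it is the place where 3-connectivity is genuinely used.)

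Next, form the branch sets. Let $A_{ab}, A_{bc}, A_{ca}$ be the three arcs into which $a,b,c$ split the cycle $@$. Define
\[
S_1 = \{w\} \cup \bigcup_{i=1}^{3}\big(V(P_i)\setminus\{a,b,c\}\big),\quad
S_2 = \{a\},\quad
S_3 = \{b\} \cup \big(V(A_{ab})\setminus\{a,b\}\big),
\]
\[
S_4 = \{c\} \cup \big(V(A_{bc})\setminus\{b,c\}\big) \cup \big(V(A_{ca})\setminus\{c,a\}\big).
\]
Each $S_i$ is connected: $S_1$ is a fan centered at $w$; $S_3$ is an arc; $S_4$ is the union of two arcs sharing $c$. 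The six required adjacencies follow directly, namely $S_1$–$S_2$ via $P_1$, $S_1$–$S_3$ via $P_2$, $S_1$–$S_4$ via $P_3$, $S_2$–$S_3$ via the edge of $A_{ab}$ at $a$, $S_2$–$S_4$ via the edge of $A_{ca}$ at $a$, and $S_3$–$S_4$ via the edge of $A_{bc}$ at $b$.

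Finally I check the ``on $U$'' condition required by Definition~\ref{minor-on-vertices}: $w \in S_1 \cap U$, while $a,b,c \in @ \subseteq U$ sit in $S_2, S_3, S_4$ respectively, so every branch set meets $U$. This gives a $K_4$ minor on $U$, completing the proof. The main obstacle, as noted, is producing the three paths from $w$ that land on three distinct cycle vertices without reusing cycle vertices internally; everything downstream is a bookkeeping construction.
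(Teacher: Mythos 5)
Your proposal is correct and follows essentially the same route as the paper's own (much terser) proof: pick $w\in U\setminus @$, use $3$-connectivity to obtain a fan of three internally disjoint paths from $w$ to three distinct vertices of $@$, and contract the cycle arcs and path interiors into the branch sets of a $K_4$ minor meeting $U$. Your version merely makes explicit the branch-set construction and the ``on $U$'' check that the paper leaves implicit.
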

\begin{proof}
Because $U\setminus @\not= \emptyset$, assume $v\in U\setminus @$.
Since $G$ is 3-connected, $v$ can connect with the simple cycle $@$
via three disjoint path. So $v\cup @$ can be reduced to be a $K_4$
minor.
\qed
\end{proof}

\begin{lemma}\label{a-small-conclusion}
If a graph $G(V,E)$ is 3-connected has a vertex set $U\subseteq V$, then either
\begin{enumerate}
\item there is a $K_4$ minor on $Ex(U)$; or
\item the subgraph $U\cap G$ is a simple cycle or a reduced forest, in which every tree is a path graph.
\end{enumerate}
\end{lemma}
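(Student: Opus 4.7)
The plan is to run a dichotomy on the maximum degree of $G_U := G \cap U$, the subgraph of $G$ induced on $U$. If some vertex of $U$ has three neighbors inside $U$, I use the $3$-connectedness of $G$ to build a $K_4$ minor on $U$ directly; otherwise $G_U$ has maximum degree $\leq 2$, so each component is a path or a simple cycle, and one further case-split lands on either a $K_4$ minor (via Lemma~\ref{k-3-minor-on-more-than-one-cycle}) or on the structure promised by conclusion~(2).

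In the degree-$\geq 3$ case, fix $v \in U$ with three distinct neighbors $v_1,v_2,v_3 \in U$ inside $G_U$. Since $G$ is $3$-connected, $G\setminus v$ is $2$-connected, so Lemma~\ref{k-3-minor-on-3-vertices} applied inside $G\setminus v$ produces a $K_3$ minor on $\{v_1,v_2,v_3\}$. None of the contractions used to form this $K_3$ touch $v$, so each of the three super-vertices still inherits the edge $vv_i$ from $G$; together with the singleton super-vertex $\{v\}$ this yields a $K_4$ minor whose four super-vertices all meet $U$ (they contain $v$ and the $v_i$ respectively), hence a $K_4$ minor on $U\subseteq Ex(U)$ by Definition~\ref{minor-on-vertices}. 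In the remaining case, every component of $G_U$ must be a path or a simple cycle, since a chord of a cycle would create a vertex of degree $3$ in $G_U$ and is therefore excluded. If $G_U$ contains a simple cycle $@$ together with some vertex not on $@$, then $U\setminus @\neq\emptyset$ and Lemma~\ref{k-3-minor-on-more-than-one-cycle} delivers a $K_4$ minor on $U$; otherwise $G_U$ is either exactly one simple cycle or is acyclic, and in the acyclic case each tree has maximum degree $\leq 2$, i.e.\ is a path graph, which is conclusion~(2).

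The main obstacle is the degree-$\geq 3$ case, but it reduces cleanly to two facts that are already in hand: that $G\setminus v$ is $2$-connected (immediate from $3$-connectedness of $G$), and that the $K_3$ minor produced by Lemma~\ref{k-3-minor-on-3-vertices} lives entirely in $G\setminus v$ and so can be glued to $v$ without interference. The only bookkeeping point is confirming that each super-vertex of the resulting $K_4$ hits $U$, which is automatic because $v$ and each $v_i$ were chosen in $U$ from the start. Everything in the small-degree case is routine structural analysis of a graph of maximum degree at most $2$.
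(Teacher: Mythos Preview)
Your proof is correct and follows essentially the same route as the paper. The paper's proof invokes Corollary~\ref{cor-k-3-minor-on-3-vertices} directly for the degree-$\geq 3$ case, whereas you unpack that corollary's argument inline (pass to $G\setminus v$, apply Lemma~\ref{k-3-minor-on-3-vertices}, glue $v$ back); the remaining case-split via Lemma~\ref{k-3-minor-on-more-than-one-cycle} and the max-degree-$\leq 2$ structural analysis are identical.
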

\begin{proof}
It follows from Corollary~\ref{cor-k-3-minor-on-3-vertices} and
Lemma~\ref{k-3-minor-on-more-than-one-cycle}. Only need to notice
that if a tree is not a path graph, then there is a vertex $v$ with
$|N(v)|\geq 3$.
\qed
\end{proof}


\begin{theorem}\label{related-with-pre-conclusion}
In a $5$-chromatic graph $G(V,E)$ which can not be reduced to
be a smaller graph by minor actions, choosing vertex $v\in V_5$ and
setting $G'=G\setminus v$, in $G$ if a $K_5$ minor can be
constructed,
then 
$N(v)\cap G'$ is a simple cycle or a reduced forest, in which every
tree is a path graph.
\end{theorem}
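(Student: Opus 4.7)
The plan is to stitch together three earlier results: Theorem~\ref{G-prime-is-3-connected}, which supplies a well-behaved vertex $v$ whose deletion leaves a $3$-connected graph with $N(v)$ kernel; Lemma~\ref{a-small-conclusion}, which forces a structural dichotomy on any vertex subset of a $3$-connected graph; and the combination trick already used in the proof of Theorem~\ref{equivalence-of-4-color-theorem}, which turns a $K_4$ minor on $Ex(N(v))$ inside $G'$ into a $K_5$ minor in $G$.

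First, because $G$ is not reducible by minor actions to a smaller graph of chromatic number $\geq 5$, Theorem~\ref{G-prime-is-3-connected} guarantees that we may choose $v\in V$ so that $G'=G\setminus v$ is $3$-connected and $N(v)$ is a set of kernel vertices of $G'$. This is the vertex for which I will prove the dichotomy; for any other vertex the claim reduces to this one, because we assume the graph is already minor-irreducible at chromatic number $5$.

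Next, I would apply Lemma~\ref{a-small-conclusion} inside $G'$ with $U=N(v)$. The lemma gives exactly two alternatives: either (i) there is a $K_4$ minor on $Ex(N(v))$ in $G'$, or (ii) the induced subgraph $G'\cap N(v)$ is a simple cycle, or a reduced forest in which each tree is a path graph. Alternative (ii) is precisely the second conclusion of the theorem.

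It remains to show that alternative (i) yields a $K_5$ minor in $G$, and this is the one step with any real content. I would reuse the argument from Theorem~\ref{equivalence-of-4-color-theorem}: starting from the identification $Ex(N(v))=N(v)$ in $G$, any contraction used to build the $K_4$ minor in $G'$ is mirrored in $G$ by contracting the same edges, while any deletion of some $v_1\in Ex(N(v))$ in $G'$ is mirrored in $G$ by contracting the edge $vv_1$, which replaces $v_1$ in $v$'s neighborhood by $N(v_1)\setminus\{v\}$ and exactly matches the update of $Ex(N(v))$ given by Definition~\ref{extending}. Thus every vertex of the final $K_4$ minor remains adjacent to $v$ in the reduced copy of $G$, and adding $v$ produces a $K_5$ minor. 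The main obstacle here is verifying that extension step by step commutes with the minor operations in $G$ without accidentally deleting $v$ or severing one of its required adjacencies, but this is precisely what Definition~\ref{extending} and the proof of Theorem~\ref{equivalence-of-4-color-theorem} were set up to guarantee, so the argument should go through verbatim.
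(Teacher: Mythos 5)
Your proposal follows essentially the same route as the paper's own proof: invoke Theorem~\ref{G-prime-is-3-connected} to obtain $v$ with $G'=G\setminus v$ $3$-connected and $N(v)$ kernel, apply Lemma~\ref{a-small-conclusion} to $U=N(v)$ for the dichotomy, and lift a $K_4$ minor on $Ex(N(v))$ to a $K_5$ minor via the combination argument of Theorem~\ref{equivalence-of-4-color-theorem}. You simply spell out the lifting step in more detail than the paper, which cites it directly.
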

\begin{proof}
By Theorem~\ref{G-prime-is-3-connected}, we can assume $G'$ is
3-connected. Similar as proof of Theorem~\ref{equivalence-of-4-color-theorem}, if
there is a $K_4$ minor on $Ex(N(v))$, there is a $K_5$ minor on $v\cup
N(v)$. Hence by Lemma~\ref{a-small-conclusion} either a $K_5$ minor
can be constructed, otherwise the subgraph $N(v)\cap G'$ is a simple
cycle or a reduced forest, in which every tree is a path graph.
\qed
\end{proof}


Theorem\label{Dirac-k-4-theorem} below was proved by Dirac in~\cite{G1952}, but we
give a new simpler proof to show why the new statement
of Hadwiger's conjecture in
Corollary~\ref{cor-new-statement-of-hadwiger-conjecture} can be used
to simply prove the Hadwiger's conjecture.
\begin{theorem}\label{Dirac-k-4-theorem}
If a graph $G(V,E)$ has a chromatic number $4$, then there is a $K_4$ minor.
\end{theorem}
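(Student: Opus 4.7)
The plan is to apply Corollary~\ref{cor-new-statement-of-hadwiger-conjecture} with $x=4$, which reduces the claim to showing that every chromatic number $3$ graph contains a $K_3$ minor on its kernel vertices. This reformulation is the whole point: finding a $K_4$ minor directly is hard, but finding a $K_3$ minor on three well-chosen vertices only requires $2$-connectivity together with Lemma~\ref{k-3-minor-on-3-vertices}.

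First I would let $G$ be an arbitrary chromatic number $3$ graph and pass to its block decomposition. Because a graph all of whose blocks are bipartite is itself bipartite, there must exist a block $B$ with $\chi(B)=3$; such a block contains an odd cycle and is therefore $2$-connected with at least three vertices. Next I would observe that $V(B)$ is a set of kernel vertices of $G$ in the sense of Definition~\ref{kernel-vertices}: given any $3$-coloring $cl$ of $G$, its restriction $cl|_B$ is a $3$-coloring of $B$, and since $\chi(B)=3$ this restriction uses all three colors, so all three colors appear on $V(B)$ under every $3$-coloring of $G$.

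Then I would pick any three vertices $v_1,v_2,v_3 \in V(B)$ and invoke Lemma~\ref{k-3-minor-on-3-vertices} inside the $2$-connected graph $B$ to obtain a $K_3$ minor on $\{v_1,v_2,v_3\}$. Each of the three contracted vertex-sets of this minor contains one of $v_1,v_2,v_3 \in V(B)$, and $V(B)$ is contained in a kernel set of $G$, so by Definition~\ref{minor-on-vertices} this is a $K_3$ minor on kernel vertices of $G$. Finally, applying the ``if'' direction of Corollary~\ref{cor-new-statement-of-hadwiger-conjecture} at $x=4$ lifts this to a $K_4$ minor in the original chromatic number $4$ graph, completing the proof.

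The main obstacle I anticipate is the bookkeeping around kernel vertices under the block reduction: one must verify both that \emph{some} block really has chromatic number $3$ (standard, from the bipartite-blocks observation) and that kernel-ness is inherited correctly, i.e.\ that every $3$-coloring of $G$ restricts to a $3$-coloring of $B$ using all three colors, so the kernel property of $V(B)$ holds in $G$ rather than merely in $B$. Once these routine checks are made, the argument is essentially one application of Lemma~\ref{k-3-minor-on-3-vertices} together with Corollary~\ref{cor-new-statement-of-hadwiger-conjecture}, which is precisely the simplification the new statement was intended to produce.
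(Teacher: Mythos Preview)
Your use of Corollary~\ref{cor-new-statement-of-hadwiger-conjecture} has a genuine gap. That corollary is the generalization of Conjecture~\ref{conjecture} and Theorem~\ref{equivalence-of-4-color-theorem}, and its mechanism is: reduce the chromatic-$4$ graph $G$ until it is $3$-connected, delete a suitable vertex $v$, and then require a $K_3$ minor on the \emph{specific} kernel set $N(v)$ (more precisely on $Ex(N(v))$) inside $G'=G\setminus v$. Thus the hypothesis you must verify is a $K_3$ minor on an \emph{arbitrary} given kernel set (or at least on the particular $N(v)$ handed to you by the reduction), not on one kernel set of your own choosing. You only exhibit a $K_3$ minor on $V(B)$ for a block $B$ you selected; nothing ties $V(B)$ to the $N(v)$ that the lifting step actually uses, so the ``if'' direction of the corollary does not fire. (Note also that if ``on its kernel vertices'' merely meant ``on \emph{some} kernel set'', the hypothesis would collapse to ``has a $K_{x-1}$ minor'', since $V$ is always a kernel set, and the corollary would assert Hadwiger at level $x$ is equivalent to Hadwiger at level $x-1$ --- clearly not what is intended.)

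The block-decomposition detour is moreover unnecessary. In the situation produced by the reduction, $G$ may be assumed $3$-connected (the $k=4$ analogue of Corollary~\ref{removing-one-vertex-to-be-3-connected}), so $G'=G\setminus v$ is already $2$-connected and $|N(v)|\ge 3$; Lemma~\ref{k-3-minor-on-3-vertices} then gives a $K_3$ minor directly on three vertices of $N(v)$, and adding $v$ back yields the desired $K_4$. This is exactly the paper's proof: reduce to $3$-connected, pick any $v$, and apply Corollary~\ref{cor-k-3-minor-on-3-vertices}. Once you unwind what the equivalence in Corollary~\ref{cor-new-statement-of-hadwiger-conjecture} really demands, your argument should collapse to this one; the passage to an arbitrary chromatic-$3$ graph and the choice of $V(B)$ as kernel set are red herrings.
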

\begin{proof}
Similar to Corollary~\ref{removing-one-vertex-to-be-3-connected},
$G$ can be assumed to be irreducible and therefore $3$-connected. So for
any $v\in V$, we have $|N(v)|\geq 3$. By
Corollary~\ref{cor-k-3-minor-on-3-vertices}, there is a $K_4$ minor
in $v\cup N(v)$.
\qed
\end{proof}

Our proof of Theorem~\ref{Dirac-k-4-theorem} is simpler and it gives
a way to find such a $K_4$ minor which is not included in Dirac's work in~\cite{G1952}.

\section{Conclusion and Next Step of Work}\label{sec-conclusion}

In this paper, we show that if a $5$-chromatic graph
can not be reduced to be a smaller chromatic $\geq 5$ graph,
and it has no $K_5$ minor, then it has no admissive cut
set (Theorem~\ref{no-independent-set-cut-vertices}) and
its minimum vertex degree is $\geq 5$ (Corollary~\ref{cor-minimum-degree-5});
further if there is a degree $5$ vertex such as in a planar graph, its neighbors are
on a five-sided polygon (Porposition~\ref{minimum-degree-5-condition}). Moreover, we give a new statement of
Hadwiger's conjecture (Theorem~\ref{equivalence-of-4-color-theorem}). By working on the new statement, to find a
$K_5$ minor in a $5$-chromatic graph is equivalent to finding
a $K_4$ minor on a chromatic number $4$ subgraph's kernel vertices,
and such kernel vertices are proved to have some special
structures (Theorem~\ref{related-with-pre-conclusion}). As an exercise we find by the new statement Hadwiger's conjecture when
$k=4$ can be proved easily and a $K_4$ minor can be constructed (Theorem~\ref{Dirac-k-4-theorem}).

In the next paper~\cite{WeiyaNote2}, we will strengthen the conclusion of
Theorem~\ref{related-with-pre-conclusion} to show that the set of
kernel vertices $N(v)$ locates on a simple cycle no matter $N(v)\cap
G'$ is a simple cycle or a forest. Also we will show that by using
the new statement of Hadwiger's conjecture, we can give a simple proof
of Wagner's equivalence theorem. Our proof is different from existing proofs
~\cite{Wagner1937,Halin1964,Halin1967,Ore1967,Young1971} since it does not depend on Kuratowski's Theorem.

\section*{Acknowledgements} I would like to thank my advisor Prof. John Franco, Prof. Dieter Schmidt and Prof. Kenneth Berman for their support and discussion. Special thanks to Prof. Gregory White for his endless discussion
and helpful comments.

\title{A New Proof of Wagner's Equivalence Theorem}

\author{Weiya Yue\inst{1}, Weiwei Cao\inst{2}}
\institute{$^1$ Computer Science Department, University of Cincinnati, Ohio, US 45220\\
$^2$ State Key Laboratory of Information Security, Graduate School
of Chinese Academy of Sciences, Beijing China 100049\\
\email{weiyayue@hotmail.com}}

\maketitle

\vspace{-5mm}
\section{Abstract}
In this paper, we prove that in a $3$-connected chromatic number $4$
graph $G(V,E)$ given a vertex set $U\subseteq V$, if there is no
$K_4$ minor on $U$, then $U$ is included by a simple cycle of $G$.
Moreover, when $G$ is a planar graph, the simple cycle is boundary
of an external face of $G$. Based on above results we give a new
proof of Wagner's Equivalence Theorem without using Kuratowski's
Theorem which is different from existing proofs.


\section{Introduction}

Hadwiger Conjecture~\cite{HH1943} states that, if an undirected
graph $G$ has chromatic number $k$, then it has a $K_k$ minor. When
$k=5$, this conjecture is a generalization of four-color problem.
The four color theorem has been proved assisted by
computer~\cite{Appel1977-1,Appel1977-2,Robertson1996,Robertson1997,George2005,George2008}.
All such proofs have difficulties in readability and checkability,
and can not be generalized to prove arbitrary $k$ of
Hadwiger Conjecture, hence it is still important to do research on
the connection between Hadwiger Conjecture when $k=5$ and four color problem in order to
find the hidden properties which may lead to a short and
generalizable proof to Hadwiger Conjecture.

In~\cite{WeiyaNote1} it concludes that to prove Hadwiger Conjecture
when $k=5$ is equivalent to prove that a $3$-connected chromatic
number $4$ graph $G(V,E)$ has a $K_4$ minor on its kernel vertices.
In this paper, we prove that in a $3$-connected chromatic
number $4$ graph $G(V,E)$ given $U\subseteq V$, either $K_4$ minor on
$U$ can be found or a subgraph of $U$ has certain and elegant structures.

By above results, we find a way to apply induction method on
graphs to prove properties of $U$. It aids us to give a new
proof of Wagner's Equivalence Theorem without using Kuratowski's
Theorem which is different from existing
proofs~\cite{Wagner1937,Halin1964,Halin1967,Ore1967,Young1971}.

In section~\ref{sec-Terminology}, terminologies and some
preliminary results are introduced. In section~\ref{simple-cycle},
it is shown that in a $3$-connected chromatic number $4$ graph
$G(V,E)$, on a vertex set $U\subseteq V$ either a $K_4$ minor can be found,
otherwise $U$ is included by a simple cycle. In section~\ref{wagner}, we give a new simple proof of Wagner's
Equivalence Theorem.

\section{Terminology Definition and Preliminary Results}\label{sec-Terminology}

In this paper conventional graph theory terminology is applied and
some definitions are quoted from~\cite{WeiyaNote1}.

\begin{definition}\label{color-assignment}
A color assignment to a graph $G(V,E)$ is a set of partitions of $V$,
in which each partition is an independent set and different partition is
assigned with a different color.
\end{definition}
We use $cl$ to denote a color assignment and integers to represent
colors. Then we can say there is a $l$-color assignment
$cl=\{1,2,...,l\}, |cl|=l$.

\begin{definition}\label{kernel-vertices}
In a chromatic number $k$ graph $G(V,E)$ given a vertex set
$U\subseteq V$, if in every $G$'s $k$-color assignment $U$ are
assigned with $k$ colors, then $U$ is called a set of kernel
vertices of $G$.
\end{definition}

Corresponding to a different minor action, we define an extension of a set of
vertices as below.
\begin{definition}\label{extending} 
Given a graph $G(V,E)$ and a set of vertices
$U\subseteq V$, if $G'(V',E')$ is reduced from $G$ by applying a
minor action, the extension of $U$, $U'\subseteq V'$ is defined
as:i) if deleting a vertex $v_1\in U$:
$U'=\{U\setminus v_1\}\cup N(v_1)$; ii) if $v_1\in U$ or $v_2\in U$
and contracting $v_1,v_2\in V$ with $e(v_1,v_2)$ to be $v'$,
$U'=\{U\setminus \{v_1,v_2\}\}\cup v'$; iii) otherwise, $U'=U$.
\end{definition}
When minor actions are applied sequentially, extension can be
defined iteratively and an iterated extension is denoted as
$Ex(U)$. $U$ is trivially an extension of itself when no action applied.

In a graph $G(V,E)$, given a vertex set $U\subseteq V$ and $|U|=x>0$, we say in $G$ there is
a $K_x$ minor on $U$ under the following condition holds: if a vertex $v\in K_x$ is contracted
from $S\subseteq V$, then $U\cap S\neq \emptyset$. If
$U'\subseteq V$ and $U\subseteq U'$, we also say there is a $K_x$
minor on $U'$.

In a simple cycle $cy$, after choosing arbitrary vertex $u\in cy$,
starting with $u$, by tracing along the cycle $cy$, a $series$ $s$
of vertices is generated. After deleting vertices from $s$, the left
series $s'$ is still called a $series$ of $cy$.

Two series $s_1,s_2$ are isomorphism if $i)$ vertex $v\in s_1$ if
and only if $v\in s_2$; and $ii)$ by rotating or reversing the
series, $s_1$ and $s_2$ can be transformed to be each other. Easy to
see, {\it any two series corresponding to one cycle are isomorphic
if and only if they contain the same set of vertices.} When we are
talking series, if every series of a cycle has a special property,
without confusion, we say the cycle has such a property.

A roundly continuous part of a series $s$ is called a $cluster$ of
$s$. An empty set of vertices can be a cluster of any series. If we
decompose $s$ into a set of clusters $cs=\{cs_1,cs_2,...,cs_x\}$ by
order where $\bigcup_{i=1}^{x} cs_i = s$. $cs$ is called clusters of
series $s$ if the end of $cs_i$ may only overlap on $\leq 1$ vertex
with the beginning of $cs_{i+1\ mod\ x}$.

For example, in Figure~\ref{fig-0}.$a$, we have cycle
$cy=\{u,c_1,c_2,c_3,c_4,c_5,u\}$, then $s_1=``c_2,c_3,c_4,c_5,u"$
and $s_2=``u,c_2,c_3,c_4,c_5"$ are $cy$'s two isomorphic series. In
series $s_1$, because {\it one cluster can be chosen roundly,
$``u,c_2,c_3"$ is one cluster.} $\{``c_3,c_4",
``c_5,u",``u,c_2,c_3"\}$ and $\{``c_3,c_4,c_5", ``u",``u,c_2,c_3"\}$
are two sets of clusters of $s_1$.

For convenience, to series $s$ of cycle $cy$ with clusters
$cs=\{cs_1,cs_2,...,cs_x\}$, we use $cy_{cs_i}$ to represent the arc
of $cy$ containing $cs_i$ and not containing the other clusters of
$cs$. With two vertices $\{c_1,c_2\}\subset cy$, we use
$cy_{c_1,c_2}, cy_{c_2,c_1}$ to represent the two split arcs of
$cy$. Also if vertex $u\not \in cy$, when we say $u$ can connect
with $cy$ on a set of vertices $CR\subseteq cy$, that means $u$ can
connect with $cy$ on $CR$ without passing through the other vertices
of $cy$. In Figure~\ref{fig-0}.$a$, set
$cy=\{c_1,c_2,c_3,c_4,c_5,c_1\}$ and $u\not \in cy$, then we say $u$
can connect with $cy$ on vertices $\{c_1,c_2,c_5\}$.
In a simple cycle $cy$, when we say along the cycle
$U=\{u_1,u_2,...,u_x\}\subseteq cy$, that means vertices in $U$ are
in the order as they appear on the cycle.

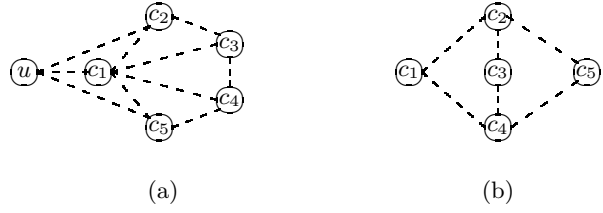
\begin{figure}[t]
\begin{center}
\setlength\unitlength{0.7pt}
\begin{picture}(400,80)(-200,-35)
\put(-173,25){\oval(14,14)} \put(-173,25){\makebox(0,0){$u$}}
\put(-133,25){\oval(14,14)} \put(-133,25){\makebox(0,0){$c_1$}}
\put(-100,55){\oval(14,14)} \put(-100,55){\makebox(0,0){$c_2$}}
\put(-62,40){\oval(14,14)} \put(-62,40){\makebox(0,0){$c_3$}}
\put(-100,-5){\oval(14,14)} \put(-100,-5){\makebox(0,0){$c_5$}}
\put(-62,10){\oval(14,14)} \put(-62,10){\makebox(0,0){$c_4$}}
\dashline{4.0}(-166,25)(-138,25) \dashline{4.0}(-166,25)(-105,50)
\dashline{4.0}(-166,25)(-105,0)
\dashline{4.0}(-126,25)(-105,50) \dashline{4.0}(-126,25)(-69,40)
\dashline{4.0}(-126,25)(-105,0) \dashline{4.0}(-126,25)(-69,10)
\dashline{4.0}(-93,55)(-67,45) \dashline{4.0}(-62,33)(-62,17)
\dashline{4.0}(-67,5)(-93,-5)
\put(-98,-40){\makebox(0,0){(a)}}
\put(35,25){\oval(14,14)} \put(35,25){\makebox(0,0){$c_1$}}
\put(83,55){\oval(14,14)} \put(83,55){\makebox(0,0){$c_2$}}
\put(83,25){\oval(14,14)} \put(83,25){\makebox(0,0){$c_3$}}
\put(83,-5){\oval(14,14)} \put(83,-5){\makebox(0,0){$c_4$}}
\put(131,25){\oval(14,14)} \put(131,25){\makebox(0,0){$c_5$}}
\dashline{4.0}(42,25)(76,55)\dashline{4.0}(42,25)(76,-5)
\dashline{4.0}(83,48)(83,32) \dashline{4.0}(83,18)(83,2)
\dashline{4.0}(90,-5)(126,20) \dashline{4.0}(126,30)(88,55)
\put(83,-40){\makebox(0,0){(b)}}
\end{picture}
\end{center}
\caption{Examples of cycle and twin-cycle} \label{fig-0}
\end{figure}

\begin{lemma}\label{one-vertex-plus-one-cycle}
In a graph $G(V,E)$, given a vertex $u\in U\subseteq V$ and a cycle $cy$ satisfying that
$u\not \in cy$ and $|\{U\}\cap cy|\geq 3$ and $u$ can connect $cy$
on a set of vertices $CR$ with $|CR|\geq 3$, if there is no $K_4$
minor on $U$, then $CR$ and $\{U\cap cy\}$ are two clusters
in any series including $CR\cup U$ on $cy$.
\end{lemma}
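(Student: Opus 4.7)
The plan is to prove the contrapositive: assume that in some series including $CR\cup U$ on $cy$, the sets $CR$ and $U\cap cy$ are \emph{not} two clusters, and derive a $K_4$ minor on $U$ in order to contradict the hypothesis.

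First I would interpret the failure of the two-cluster property as an interleaving: walking around $cy$, one cannot split the vertex sequence into one arc containing all of $CR$ and another arc containing all of $U\cap cy$ with at most one shared vertex at each boundary. Using this interleaving together with $|CR|\geq 3$ and $|U\cap cy|\geq 3$, the aim is to partition the vertex sequence of $cy$ into three disjoint arcs $A_1,A_2,A_3$ (taken cyclically) such that each $A_i$ contains at least one vertex of $CR$ and at least one vertex of $U\cap cy$. A vertex lying in $CR\cap U\cap cy$ counts for both sides, and any maximal same-label block of size $\geq 2$ can be cut internally so that its two halves fall into adjacent arcs; this flexibility is what makes such a partition possible once the two-cluster obstruction fails.

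Given such a partition, pick $u_i\in A_i\cap U$ and $r_i\in A_i\cap CR$ (possibly $u_i=r_i$) for $i=1,2,3$, and set $S_i=A_i$. Each $S_i$ is a connected arc of $cy$ containing $u_i$ and $r_i$, and consecutive arcs $S_i,S_{i+1}$ are joined by the cycle edge at their common boundary, so $S_1,S_2,S_3$ are pairwise adjacent. Since $u$ connects with $cy$ on $CR$, for each $i$ there is a path $P_i$ from $u$ to $r_i$ whose interior avoids $cy$ (if some $P_i$ met $cy$ at another $CR$ vertex, we could truncate it and relabel). Define $S_u=\{u\}\cup P_1^\circ\cup P_2^\circ\cup P_3^\circ$; this is connected through $u$, disjoint from every $S_i$, and adjacent to each $S_i$ via the last edge of $P_i$ into $r_i$. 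Hence $S_u,S_1,S_2,S_3$ witness a $K_4$ minor on $\{u,u_1,u_2,u_3\}\subseteq U$, the desired contradiction.

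The main obstacle is the combinatorial middle step: producing the three-arc partition $A_1,A_2,A_3$ from the failure of the two-cluster condition. This requires a case analysis on the cyclic block structure of labels $\{CR\text{-only},\,U\text{-only},\,\text{both}\}$. The delicate cases are when single-type blocks are few or small and $CR\cap U\cap cy$ is nearly empty: one must carefully place cut points inside size-$\geq 2$ blocks to avoid an ``orphan'' arc that would contain only one of the two sets. The size constraints $|CR|\geq 3$ and $|U\cap cy|\geq 3$ are precisely what guarantee enough material to carry out this surgery.
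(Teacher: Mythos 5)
Your proposal is correct, and it is considerably more of a proof than the paper provides: the paper's entire argument for this lemma is the sentence ``The proof is straightforward,'' so there is no authorial proof to compare against. Your contrapositive construction is the natural one and it is sound: three arcs $A_1,A_2,A_3$ partitioning $V(cy)$ are automatically pairwise adjacent via the cycle edges at their boundaries; the branch set $\{u\}\cup P_1^\circ\cup P_2^\circ\cup P_3^\circ$ is connected through $u$, disjoint from the arcs because the connecting paths meet $cy$ only in $CR$ (your truncate-and-relabel remark correctly handles a path that first hits a different $CR$-vertex), and each of the four branch sets meets $U$, which is exactly what Definition~\ref{minor-on-vertices} requires. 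The one step you defer --- that failure of the two-cluster condition yields three disjoint arcs each meeting both $CR$ and $U\cap cy$ --- is in fact true, and the case analysis you gesture at does close: if $CR$ (or symmetrically $U\cap cy$) is non-contiguous it splits into $k\ge 2$ maximal blocks separated by nonempty gaps of the other set; for $k\ge 3$ pair each block with an adjacent gap, and for $k=2$ the hypotheses $|CR|\ge 3$, $|U\cap cy|\ge 3$ force either a block of size $\ge 2$ together with a gap of size $\ge 2$ (cut each once) or a shared vertex inside a block (use it as a singleton arc counting for both sets). If instead both sets are contiguous, the only way the cluster condition fails is that $CR\cap U\cap cy$ has $\ge 3$ vertices or contains two consecutive vertices not split between the two junctions, and in either case singleton arcs on the shared vertices plus the remaining arc do the job. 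So the proposal is a correct argument whose only omission is routine bookkeeping that you have accurately characterized; I would only ask you to write out that block-structure analysis explicitly, since it is the entire content of the lemma and the place where the hypotheses $|CR|\ge 3$ and $|U\cap cy|\ge 3$ are actually consumed.
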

\begin{proof}
The proof is straightforward.

\qed
\end{proof}

\begin{definition}\label{twin-cycle}
In a graph $G(V,E)$, given a vertex st $U\subseteq V$, we say a $G$'s sub-graph
$G_s(V_s,E_s)$ is a $twin-cycle$ on $U$, if $U\subseteq V_s$ and:
\begin{enumerate}
\item there are two cycles $cy_1,cy_2$ with $cy_1\cup cy_2 = G_s$;
\item $P=cy_1\cap cy_2$ is one path with $v_1,v_2$ as the two
endpoints;
\item $U\cap \{cy_1\setminus P\}\not =\emptyset, U\cap \{cy_2\setminus P\}\not
=\emptyset$ and $U\cap \{P\setminus \{v_1,v_2\} \} \not =\emptyset$.
\end{enumerate}
We say the two vertices $v_1,v_2$ are crossing vertices of the
$twin-cycle$, and $\{cy_1\setminus P\}\cup \{v_1,v_2\}, \{cy_2\setminus
P\}\cup \{v_1,v_2\}, P$ are its three
half-cycles.
\end{definition}

{\it Here we emphasize when there is one sub-graph
$G_s$ on $U$, without explicit explanation, it is assumed
$U\subseteq cy$.} In Figure~\ref{fig-0}.$b$, there is a twin-cycle
on $\{c_1,c_3,c_5\}$ in which $P=``c_2,c_3,c_4"$ and $\{c_2,c_4\}$
are its two crossing vertices, and $``c_2,c_1,c_4",``c_2,c_5,c_4",
P$ are the three half-cycles.

We will use a simple operation named ``reforming" on simple cycle
and $twin-cycle$. Now we explain it with an example on
Figure~\ref{fig-0}.$a$.
In Figure~\ref{fig-0}.$a$, using the simple cycle
$cy=``c_1,c_2,c_3,c_4,c_5,c_1"$, a set of vertices
$U=\{u,c_2,c_4\}$, and $I=U\cap cy=\{c_2,c_4\}$, then $u\in U$ and
$u\not \in I$.{\it We use reforming on $cy$ to get a new cycle
including $U$}. $u$ connects with $cy$ via two vertex disjoint
pathes $P_{u,c_2},P_{u,c_5}$ which are terminating at $cy$ at
vertices $c_2,c_5$ respectively. Also name the path
$P=``c_2,c_1,c_5"$ between $c_2,c_5$ along the cycle $cy$, easy to
see $\{P\setminus \{c_1,c_2\}\}\cap U=\emptyset$, then we can reform
$cy$ by replacing $P\setminus \{c_1,c_2\}$ with $P_{u,c_2}\cup
P_{u,c_5}$. After reformation, we have a new simple cycle
$cy'=``u,c_2,c_3,c_4,c_5,u"$ and $cy\cap U\subset cy'\cap U$. We
call this operation as the reforming of $cy$ with respect to $U$.

Sometimes we will say to reform a simple cycle with respect to $U$,
or {\it apply reforming} for abbreviation. Similarly, on
$twin-cycle$, if $c_1,c_2$ are on a half cycle of the $twin-cycle$,
we can define the same operation.

\begin{lemma}\label{no-twin-cycle-in-3-connected}
In a 3-connected graph $G(V,E)$ given a vertex set $U\subseteq V$ and $|U|\geq 4$, if
there is a $twin-cycle$ on $U$, then there is a $K_4$ minor on
$U$.
\end{lemma}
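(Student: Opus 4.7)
The plan is to use the theta-graph skeleton of the twin-cycle together with the 3-connectivity of $G$ to produce a topological $K_4$ in $G$, and then inflate it into four branch sets each holding a distinct vertex of $U$. By the twin-cycle definition I fix $u_A \in U \cap (cy_1 \setminus P)$, $u_B \in U \cap (cy_2 \setminus P)$, $u_C \in U \cap (P \setminus \{v_1, v_2\})$, and let $u_4$ be any fourth vertex of $U$, which lies somewhere on the twin-cycle itself because $U \subseteq V_s$.

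Next I exploit 3-connectivity: removing $\{v_1, v_2\}$ cannot disconnect $G$, yet the twin-cycle minus its two crossing vertices falls apart into three open arcs, so some shortest path $Q$ in $G \setminus \{v_1, v_2\}$ has endpoints $a, b$ on two distinct arcs and an interior disjoint from the twin-cycle. Taking $a$ on $cy_1 \setminus P$ and $b$ on $cy_2 \setminus P$ without loss of generality, the twin-cycle together with $Q$ realizes a $K_4$ subdivision with topological branch vertices $\{v_1, v_2, a, b\}$. I then build branch sets by amalgamating the arc $P$ together with one crossing vertex into the $u_C$-branch, splitting each outer arc at $a$ and $b$ respectively, and distributing the two halves of $Q$ into the $u_A$- and $u_B$-branches; a routine check confirms all six inter-branch edges are present. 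When $u_4$ coincides with one of $v_1, v_2, a, b$, or with an absorbed internal vertex distinct from $u_A, u_B, u_C$, the four branch sets immediately cover four distinct $U$-vertices.

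The hard part is the case in which $u_4$ lies interior to an arc and shares that arc with one of $u_A, u_B, u_C$, since then the natural split point of $Q$ may fail to separate $u_4$ from its sibling canonical vertex. My plan is to choose $Q$ adaptively: because 3-connectivity forces minimum degree at least three, the arc-segment between $u_4$ and its sibling must emit a third-neighbor edge leading outside the segment, and tracing this edge through the connected subgraph $G \setminus \{v_1, v_2\}$ produces an alternative cross-arc path whose endpoint lies strictly between them. If a direct trace is not available, the reforming operation from Section~\ref{sec-Terminology} can re-route the twin-cycle through a detour so that the new configuration admits a suitable $Q$. Verifying that this adaptive choice succeeds in every sub-case, and ruling out pathological configurations using 3-connectivity, is the technical core of the argument.
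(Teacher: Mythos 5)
Your strategy is genuinely different from the paper's cluster-based case analysis: you build a theta graph on the crossing vertices $v_1,v_2$, add a cross-arc path $Q$ supplied by the connectivity of $G\setminus\{v_1,v_2\}$ to obtain a $K_4$ subdivision with branch vertices $v_1,v_2,a,b$, and then try to inflate the four branch sets so that each captures a vertex of $U$. The skeleton is sound (the three open arcs are nonempty, $G\setminus\{v_1,v_2\}$ is connected, and a shortest path joining two distinct arcs has interior disjoint from the twin-cycle), but the step you yourself identify as the technical core fails as written. The claim that minimum degree $\geq 3$ on the segment between $u_4$ and its sibling ``emits a third-neighbor edge'' which, traced through $G\setminus\{v_1,v_2\}$, yields a new cross-arc path landing \emph{strictly between} them is a non sequitur: that third edge may be a chord returning to the same arc, or may lead into a part of $G$ that reattaches to the twin-cycle only outside the segment; nothing in $3$-connectivity forces the new $Q$ to split $u_4$ from $u_A$. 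The fallback to ``reforming'' is also not available here --- reforming is defined for re-routing a cycle through an uncovered vertex of $U$, not for relocating the endpoint of a chord path --- and you explicitly defer ``verifying that this adaptive choice succeeds in every sub-case,'' which is exactly the content of the lemma. A secondary problem: your ``without loss of generality'' placing $a$ and $b$ on the two outer arcs is not WLOG for the branch-set recipe you give, since $Q$ may attach to the interior of $P$, in which case you cannot amalgamate all of $P$ together with a crossing vertex into a single branch set without swallowing the branch vertex $a$.

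The good news is that the gap is unnecessary: you never need to move $Q$. Keep the fixed subdivision and redistribute the arc segments instead. Each of the five arc-segments (and $Q$) is a subdivision path whose interior splits into two intervals, one absorbed by each endpoint's branch set. In your hard case --- $u_4$ and $u_A$ on the same arc and on the same side of $a$ --- let the branch set of the nearer crossing vertex absorb the arc up to the first of the two, let $B_a$ absorb back to the second, hand the entire interior of $P$ (containing $u_C$) to the \emph{other} crossing vertex's branch set, and let $B_b$ absorb along its arc to $u_B$; all six adjacencies survive and all four branch sets meet $U$. A short exhaustive check over the location of the fourth $U$-vertex (interior of $P$, an outer arc, or a crossing vertex), together with the analogous bookkeeping when $Q$ attaches to the interior of $P$, completes the proof. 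If you replace the adaptive-$Q$ paragraph with this reassignment argument, your proof goes through and is, in my view, cleaner and more self-contained than the paper's, which routes everything through the cluster ordering of Lemma~\ref{one-vertex-plus-one-cycle} and a case split on $|U\cap T|$.
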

\begin{proof}
Suppose there is $twin-cycle$ $T$ on $U$ with crossing vertices
$\{a,b\}$, and three half cycles are $cy_1,cy_2,cy_3$. For
convenience, set $cy_i'=cy_i\setminus \{a,b\}(i\in \{1,2,3\})$. By
Definition~\ref{twin-cycle}, $U\cap cy'_i\not =\emptyset
(i=\{1,2,3\})$, so assume $u_i\in cy'_i (i=\{1,2,3\})$.
Because $|U|\geq 4$, we have cases as below:
\begin{enumerate}
\item\label{case-4-ver}{there is $cy_i, i\in \{1,2,3\}$ with $|cy_i\cap U|>1$:} I.e. $|U\cap T|\geq 4$.  assume
$i=1$, then in cycle $cy=cy_1\cup cy_2$, there is $|U\cap cy|\geq
3$. Because $G$ is 3-connected, $u_3$ can connect with $cy$ without
passing through $\{a,b\}$. I.e., $u_3$ can connect with $cy$ at
$CR=\{a,b,c\}$. Easy to see, no matter $c\in cy_1$ or $c\in cy_2$,
$CR$ and $\{U\cap cy\}\setminus CR$ can not be two clusters of any
series of $cy$. By Lemma~\ref{one-vertex-plus-one-cycle}, there is a
$K_4$ minor on $U$.

\item\label{case-3-ver}{$|cy_i\cap U|=1(i\in \{1,2,3\})$:} I.e. $|U\cap T|=3$.
    Because $|U|\geq 4$, there is $u_4\in U$ and $u_4\not\in T$.
    Because $G$ is $3$-connected, from $u_4$ to $T$ there are three vertex disjoint pathes
    $p_1,p_2,p_3$ crossing with $twin-cycle$ at $\{c_1,c_2,c_3\}$
    respectively.
    \begin{enumerate}
    \item{$c_1,c_2,c_3$ belong to one half cycle:} assume $i=1$.
    And the order along $cy_1$ is $``a,c_1,c_2,c_3,b"$:
        \begin{enumerate}
        \item{$u_1$ is between $c_1,c_3$:}\label{case-constract} by contracting $cy'_2$
        with $a$ to be $u'_2$, in cycle $cy=cy_1\cup cy_3$ there is $|cy\cap
        U|=3$ in which $\{u_1,u'_2,u_3\}$ and $\{c_1,c_2,c_3\}$ are not
        two clusters in any series of $cy$. By Lemma~\ref{one-vertex-plus-one-cycle},
        there is one $k_4$ minor on $U$.

        \item{$u_1$ is not between $c_1,c_3$:} assume the order is $``a,u_1,c_1,c_2,c_3,b"$,
            then we can get one new $twin-cycle$ $T'$ by reforming the $twin-cycle$ with respect to $U$.
            Then we have $|T'\cap U|\geq 4$ and can be analyzed as Case~\ref{case-4-ver}.
        \end{enumerate}
    \item{$c_1,c_2,c_3$ belong to two half cycles:}
         assume $c_1,c_2\in cy_1$ and $c_3\in cy_2$.
         By contracting $cy'_2$ with $a$ or $b$ and Lemma~\ref{one-vertex-plus-one-cycle},
         a $k_4$ minor on $U$ can be constructed as in Case~\ref{case-constract}.
    \item{$c_1,c_2,c_3$ belong to three half cycles:} this Case is
    similar as Case~\ref{case-constract}.
    \end{enumerate}
\end{enumerate}

\qed
\end{proof}

\begin{lemma}\label{get-k-3-3}
In a graph $G(V,E)$ given a vertex $v\in V$, if there is a twin-cycle on $N(v)$ then $G$
has a $K_{3,3}$ minor.
\end{lemma}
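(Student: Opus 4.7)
The plan is to read off the $K_{3,3}$ minor directly from the twin-cycle, using the two crossing vertices together with $v$ as one side of the bipartition and the three ``interior arcs'' of the three half-cycles as the other side.

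First I would set up notation. Let $T$ be the twin-cycle on $N(v)$, let $P$ be the shared path with endpoints $v_1,v_2$, and let $H_1,H_2,H_3$ be the three half-cycles, where $H_3=P$. By Definition~\ref{twin-cycle}, we can choose vertices $u_1\in N(v)\cap(H_1\setminus\{v_1,v_2\})$, $u_2\in N(v)\cap(H_2\setminus\{v_1,v_2\})$, and $u_3\in N(v)\cap(H_3\setminus\{v_1,v_2\})$. I would quickly argue that without loss of generality $v\notin V(T)$: if $v$ lies on some half-cycle we can replace the corresponding arc by the neighboring twin-cycle edges incident to $v$'s neighbors, or simply delete $v$ from $T$ (deleting an interior vertex of a half-cycle still leaves a cycle through the two endpoints, because the neighbors of $v$ on the half-cycle are themselves in $N(v)\subseteq V(T)$, so the twin-cycle structure persists after removing $v$).

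Next I would exhibit the branch sets. Take the bipartition
\[
  \{A_1,A_2,A_3\}=\{\{v\},\{v_1\},\{v_2\}\},\qquad
  \{B_1,B_2,B_3\}=\{H_1\setminus\{v_1,v_2\},\; H_2\setminus\{v_1,v_2\},\; P\setminus\{v_1,v_2\}\}.
\]
Each $B_i$ is an arc of a half-cycle, hence connected and contains $u_i$. The three $B_i$'s are pairwise disjoint because the half-cycles meet only in $\{v_1,v_2\}$, and each is disjoint from $\{v,v_1,v_2\}$ by construction and by the WLOG step above.

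Then I would verify all nine cross-edges. For any $i$, $v$ is adjacent to $u_i\in B_i$ since $u_i\in N(v)$, giving the edge from $A_1$ to $B_i$. For $v_1$ (respectively $v_2$), it is an endpoint of each half-cycle, so the vertex of $B_i$ lying next to $v_1$ (respectively $v_2$) along $H_i$ provides the required edge from $A_2$ (respectively $A_3$) to $B_i$. This exhibits all nine edges needed for a $K_{3,3}$ minor.

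The main obstacle is the technical one of guaranteeing $v\notin V(T)$ (or otherwise keeping $v$ out of the branch sets $B_i$). I expect this to be settled by the deletion/replacement trick above, using that every neighbor of $v$ in $V(T)$ already lies on $T$, so $v$ can be removed from $T$ without destroying the twin-cycle structure on $N(v)$; after that, the branch-set construction is essentially automatic.
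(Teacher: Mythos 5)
Your construction is exactly the paper's: its one-line proof exhibits the $K_{3,3}$ minor on $\{v,a,b,v_1,v_2,v_3\}$, i.e.\ one side is $v$ together with the two crossing vertices and the other side is a representative of $N(v)$ on each of the three half-cycle interiors, which is precisely your bipartition with the branch sets written out explicitly. The only place you go beyond the paper is the worry that $v$ might itself lie on the twin-cycle --- your deletion argument there is not quite right (removing an interior vertex of a half-cycle disconnects that arc, and its two neighbours in $N(v)$ need not be adjacent) --- but the paper silently assumes $v\notin V(T)$, and in the lemma's only application (Lemma~\ref{twin-cycle-2-k33}) $v$ is a newly added vertex with $N(v)=U$, so that case never arises.
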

\begin{proof}
Assume $\{v_1,v_2,v_3\}\subseteq N(v)$ belong to the three
half-cycles $c_1,c_2,c_3$ of one twin-cycle respectively. And $a,b$
are the crossing points of the twin-cycle. Then on
$\{v,a,b,v_1,v_2,v_3\}$ there is a $K_{3,3}$ minor.

\qed
\end{proof}

Results below are needed in this paper. When we say a
graph is $n$-connected, it means the graph is connected after
removing arbitrary $(n-1)$ vertices.

\begin{theorem}~\cite{Ore1967,Young1971}\label{k33-to-k5}
In a $4$-connected graph $G(V,E)$, if there is a $K_{3,3}$ minor
then there is a $K_5$ minor.
\end{theorem}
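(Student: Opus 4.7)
The plan is to fix a $K_{3,3}$ minor in $G$ with branch sets $A_1, A_2, A_3$ and $B_1, B_2, B_3$, chosen so that the total number of vertices used in these branch sets is minimal. Let $H = A_1 \cup \cdots \cup B_3$ and $W = V \setminus H$. The underlying idea is that $K_{3,3}$ is only $3$-connected, while $G$ is $4$-connected, so there must be ``extra'' structure in $G$ beyond what $K_{3,3}$ by itself requires, and this extra structure should let us merge two same-side branch sets (say $A_1$ and $A_2$) into one connected piece that is still adjacent to $B_1, B_2, B_3$, producing a $K_5$ minor on five branch sets.

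First I would show, by the minimality of the minor, that each branch set induces a tree, and that no vertex of a branch set is useless (otherwise we could delete it and still have the $K_{3,3}$ minor). Second, I would split into cases according to whether $W$ is empty. If $W \neq \emptyset$, pick $w \in W$; by $4$-connectivity, $w$ has four internally vertex-disjoint paths to $H$, and the pigeonhole principle forces at least two of their endpoints to land on the same side of the bipartition, producing a path between two same-side branch sets that bypasses at least one branch set on the other side. If $W = \emptyset$, then the $4$-connectivity of $G$ forces some branch set to contain enough internal structure that a similar detour path can be extracted between two same-side branch sets using $4$ internally disjoint paths between chosen vertices of $A_1$ and $A_2$, of which at most three can hit $B_1 \cup B_2 \cup B_3$.

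Once such an extra path $P$ between $A_1$ and $A_2$ is available that avoids at least one $B_j$ (say $B_3$) and that does not pass through $A_3$, I would form the new branch set $A_1 \cup A_2 \cup P$; then together with $A_3$, $B_1$, $B_2$, $B_3$ we have six branch sets, and contracting $B_3$ into any adjacent piece (possible since it is adjacent to both $A_1$ and $A_2$, hence to $A_{12}$, and to $A_3$) gives five pairwise-adjacent connected sets, i.e.\ a $K_5$ minor.

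The main obstacle will be the case analysis verifying that the ``extra'' path $P$ can always be chosen to avoid both $A_3$ and at least one $B_j$ simultaneously. Naively, the four internally disjoint paths from one vertex might all hit obstructing branch sets in awkward ways; handling this requires carefully rerouting paths along the tree structure of the branch sets and using the full strength of $4$-connectivity, which is essentially the technical work carried out in~\cite{Ore1967,Young1971}.
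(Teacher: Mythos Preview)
The paper does not prove this theorem at all; it is quoted with citations to \cite{Ore1967,Young1971} and used as a black box, so there is no ``paper's proof'' to compare your attempt against.

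That said, your outline has the right instinct---exploit the gap between the $3$-connectivity of $K_{3,3}$ and the $4$-connectivity of $G$ to find extra structure---but the final construction is broken. After forming $A_{12}=A_1\cup A_2\cup P$, the list $A_{12},A_3,B_1,B_2,B_3$ contains \emph{five} branch sets, not six, and these five are not pairwise adjacent: nothing forces $A_{12}$ to touch $A_3$, nor any $B_i$ to touch any $B_j$. ``Contracting $B_3$ into an adjacent piece'' then leaves only four sets, and even those fail to be pairwise adjacent (e.g.\ $B_1$ and $B_2$ remain non-adjacent). A single extra $A$--$A$ path is simply not enough to upgrade a $K_{3,3}$ minor to a $K_5$ minor: you can check directly that the graph $K_{3,3}+a_1a_2$ (which is exactly $K_{3,3}$ together with one same-side edge) still has no $K_5$ minor, so no amount of cleverness in choosing the five branch sets will rescue this step.

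The classical route in the cited references is indirect: one shows (via Wagner's structure theorem for $K_5$-minor-free graphs) that a $4$-connected graph with no $K_5$ minor must be planar, and hence by Kuratowski has no $K_{3,3}$ minor either. A genuinely direct argument is possible but requires producing substantially more extra adjacencies than a single path---enough to supply both a same-side connection on the $A$ side and one on the $B$ side, or an equivalent configuration---and the case analysis needed for that is the real content you are deferring to the references.
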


\begin{theorem}\label{G-and-G'}~\cite{WeiyaNote1}
Given a chromatic number $5$ graph $G=(V,E)$, if $G$ can not be
reduced by applying minor actions to be a smaller graph with a
chromatic number $\geq 5$, then there is a vertex $v\in V$, such
that $G'=G\setminus v$ is 3-connected and $N(v)$ is a set of kernel
vertices of $G'$.
\end{theorem}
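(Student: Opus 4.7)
The plan is to pick a witness vertex $v$ from the color class $V_5$ of a minimum color assignment and then verify the two required properties (3-connectedness of $G' = G \setminus v$ and kernel status of $N(v)$) by invoking results already established earlier in the companion paper.

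First I would set up $v$. By Lemma~\ref{reduce-v-5}, applied to the minimum color assignment $CL$ with $frequency$-$vector$ $NV$, we may assume $|V_5| = 1$, so we can take $v$ to be the unique vertex with $color_{CL}(v) = 5$. This choice is the whole point: since $v$ is alone in its color class in an $fv$-minimum assignment, any attempt to recolor $G' = G \setminus v$ using only four colors would have to leave some color missing from $N(v)$, which would allow $v$ to be absorbed into that color class and contradict the minimality of $NV$. This is precisely the content of Lemma~\ref{v's-neighbors}, so the kernel property $N(v) \subseteq V'$ is a set of kernel vertices of $G'$ follows directly.

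Next I would handle 3-connectedness. The hypothesis is that $G$ is not reducible by minor actions to a smaller graph with chromatic number $\geq 5$. By Corollary~\ref{removing-one-vertex-to-be-3-connected}, this already forces $G$ itself to be at least $4$-connected (otherwise, every cut set of size $\leq 3$ would be consistent, and the theorem would produce a smaller chromatic number $\geq 5$ graph by contraction, contradicting irreducibility). Once we know $G$ is $4$-connected, removing any single vertex $v$ yields a graph $G'$ in which any cut set $W'$ of $G'$ together with $v$ forms a cut set of $G$ of size $|W'|+1$; hence $|W'|+1 \geq 4$, i.e.\ $G'$ is $3$-connected.

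Combining these two observations finishes the argument: the $v \in V_5$ chosen after the reduction $|V_5|=1$ satisfies both that $G' = G \setminus v$ is $3$-connected and that $N(v)$ is a kernel vertex set of $G'$. The only step that requires any genuine thought is ensuring that the reduction $|V_5|=1$ of Lemma~\ref{reduce-v-5} is compatible with the irreducibility hypothesis; this is the likely obstacle, and I would address it by noting that applying Lemma~\ref{reduce-v-5} does not actually contract or delete anything from $G$ structurally, it merely selects a convenient coloring, so the irreducibility hypothesis of $G$ is preserved and the prior results apply to the original graph. No appeal to Kuratowski-type planarity properties or to the detailed structure of $N(v)$ is needed here; those are used only in the subsequent strengthening in \cite{WeiyaNote2}.
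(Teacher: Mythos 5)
Your proof follows essentially the same route as the paper's: choose the witness $v$ from the color class $V_5$ of the minimum assignment (reduced to $|V_5|=1$ via Lemma~\ref{reduce-v-5}), obtain the kernel property from Lemma~\ref{v's-neighbors}, and obtain $3$-connectedness of $G'$ from the $4$-connectedness of $G$ given by Corollary~\ref{removing-one-vertex-to-be-3-connected}. The only extra care you take --- reconciling the $|V_5|=1$ normalization with irreducibility --- is handled even more directly by noting that if $|V_5|>1$ then deleting $V_5\setminus\{v\}$ would itself be a reduction to a smaller chromatic number $5$ graph, so irreducibility already forces $|V_5|=1$; otherwise your argument matches the paper's.
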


For convenience, call $G,G'$ parent and child graph respectively.
Easy to see $|N(v)|\geq 4$. It has been shown that an extension in
$G'$ can be got by applying minor actions in $G$ which is not
complicated to prove~\cite{WeiyaNote1}.


\begin{conjecture}\label{Ex-conjecture}~\cite{WeiyaNote1}
In a $3$-connected chromatic number $4$ and $K_5$ minor free graph
$G(V,E)$, if $U$ is a set of kernel vertices of $G$, then there is a
$K_4$ minor on $Ex(U)$.
\end{conjecture}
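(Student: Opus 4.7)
The plan is to reduce the question to a $K_5$-minor problem on the parent graph and then induct on $|V|$. Form the graph $H$ by adjoining a single new vertex $v$ adjacent exactly to $U$. Because $U$ is a set of kernel vertices of $G$, every 4-coloring of $G$ assigns all four colors to $U$, so $v$ has no admissible color in any attempted 4-coloring of $H$; hence $H$ has chromatic number 5. By the construction used in the proof of Theorem~\ref{equivalence-of-4-color-theorem}, producing a $K_4$ minor on $Ex_G(U)$ is equivalent to producing a $K_5$ minor of $H$ that uses $v$, because minor actions in $G$ that expand $U$ to $Ex_G(U)$ lift to minor actions in $H$ that keep $v$'s neighborhood exactly equal to the extended set.

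First I would apply the reduction machinery of Section~\ref{sec-4-color-theorem} to $H$. If $H$ admits a consistent cut set, Theorem~\ref{no-independent-set-cut-vertices} either yields a $K_5$ minor outright (done) or produces a strictly smaller chromatic number $\geq 5$ graph on which induction applies. Otherwise, by Corollary~\ref{removing-one-vertex-to-be-3-connected} and Corollary~\ref{cor-minimum-degree-5} the graph $H$ has no consistent cut set, is at least $4$-connected, and has minimum vertex degree $\geq 5$. In this irreducible regime, the base case is when $v$ itself has degree $|U|$ small enough to analyze directly; otherwise the inductive call feeds a smaller instance.

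Second I would examine the structure of $G\cap U$ via Lemma~\ref{a-small-conclusion}: either $Ex(U)$ already supports a $K_4$ minor and we are done, or $G\cap U$ is a simple cycle, or $G\cap U$ is a forest each of whose trees is a path. In the cycle case, I would invoke 3-connectedness of $G$ to find an off-cycle vertex $u$ with three vertex-disjoint paths to the cycle; if its three attachment points are not consistent with a two-cluster decomposition of the cycle series containing $U$, Lemma~\ref{one-vertex-plus-one-cycle} produces a $K_4$ minor on $Ex(U)$ immediately. If they are consistent, I would apply the reforming operation iteratively to absorb $u$ into the cycle, growing the $U$-content until a twin-cycle on $U$ is forced; Lemma~\ref{no-twin-cycle-in-3-connected} then delivers the $K_4$ minor. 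The forest case reduces to the cycle case by using 3-connectedness to close the path trees into a cycle via external paths, and then running the cycle argument on the enlarged $Ex(U)$.

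The main obstacle will be the simultaneous bookkeeping of (a) the extension $Ex(U)$ under repeated minor actions, and (b) the preservation of 3-connectedness, chromatic number 4, and $K_5$-freeness of $G$ after each reduction. Each minor action on $G$ must correspond to a chromatic-number-preserving reduction on $H$, and one must verify that if the reduced instance $(G',Ex(U'))$ admits a 4-coloring that fails the kernel property, then lifting that coloring back to $G$ contradicts the kernel assumption on $U$. This kernel-preservation argument, combined with the cycle/forest case analysis, is essentially where the full strength of Hadwiger's conjecture at $k=5$ must be extracted, and I expect it to be the hardest portion of the argument.
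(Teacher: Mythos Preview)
The statement you are attempting to prove is presented in the paper as a \emph{conjecture}, not a theorem; the paper does not supply a proof of it. What the paper does prove (Theorem~\ref{equivalence-of-4-color-theorem} / Theorem~\ref{equivalence-theorem}) is that this conjecture is \emph{equivalent} to Hadwiger's conjecture at $k=5$, hence to the Four Color Theorem via Wagner's equivalence. So any genuine proof of this statement is a proof of the Four Color Theorem.

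Your proposal has two substantive gaps that reflect exactly this difficulty. First, the inductive scheme in your second paragraph is circular. When the reduction machinery of Section~\ref{sec-4-color-theorem} applied to $H$ produces a strictly smaller chromatic~$\ge 5$ graph $H'$, you propose to ``apply induction''. But the inductive hypothesis you would need is that every smaller chromatic~$5$ graph has a $K_5$ minor, which is Hadwiger at $k=5$ --- the very statement equivalent to the conjecture you are proving. The reduction does not preserve the form ``$3$-connected chromatic~$4$ graph $G$ with kernel set $U$'', so you cannot recurse on the conjecture's own hypothesis.

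Second, the cycle case in your third paragraph does not terminate. You claim that iterated reforming will eventually force a twin-cycle on $U$, at which point Lemma~\ref{no-twin-cycle-in-3-connected} yields the $K_4$ minor. But the paper's own Theorem~\ref{U-on-perimeter} shows that when $G$ is $U$'s formal graph (no $K_5$ or $K_{3,3}$ minor arises after adjoining $v$), $G$ is planar with $U$ on an outer face boundary. In that situation every off-cycle vertex attaches to the cycle in a way \emph{consistent} with the two-cluster decomposition, reforming merely re-routes the cycle along the boundary, and no twin-cycle is ever forced. This planar-boundary configuration is precisely the irreducible core where the Four Color Theorem lives; your argument sweeps it into ``grow until a twin-cycle is forced'', which simply does not happen there. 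You yourself flag the kernel-preservation bookkeeping as ``the hardest portion'', and indeed that portion is the entire content of the Four Color Theorem --- the outline does not supply it.
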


\begin{theorem}\label{equivalence-theorem}~\cite{WeiyaNote1}
In a chromatic number $5$ graph $G$, there exists a $K_5$ minor if
and only if Conjecture~\ref{Ex-conjecture} is correct.
\end{theorem}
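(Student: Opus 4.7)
The plan is to prove the biconditional by handling each direction as a separate construction. For the forward direction, assume Conjecture \ref{Ex-conjecture} is correct and let $G$ be a chromatic number $5$ graph. Without loss of generality $G$ cannot be reduced by minor actions to a smaller graph of chromatic number $\geq 5$, so Theorem \ref{G-and-G'} supplies a vertex $v \in V$ such that $G' = G \setminus v$ is $3$-connected and $N(v)$ is a set of kernel vertices of $G'$. If $G'$ already contains a $K_5$ minor we are done; otherwise $G'$ is $K_5$-minor free, and the Conjecture applied to $U = N(v)$ delivers a $K_4$ minor on $Ex(N(v))$ in $G'$. The remaining step is to show that this $K_4$ minor, joined with $v$, yields a $K_5$ minor of $G$.

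The key bookkeeping step is to translate each minor action in $G'$ into a corresponding action in $G$ that preserves the invariant ``every vertex in the current $Ex(N(v))$ is a neighbor of $v$.'' Initially $Ex(N(v)) = N(v)$ and the invariant holds by definition. Edge contractions in $G'$ lift to the same contraction in $G$, and deletions of vertices outside $Ex(N(v))$ carry over unchanged. The only delicate case is deleting a vertex $v_1 \in Ex(N(v))$ in $G'$: in $G$ we instead contract the edge $e(v,v_1)$, absorbing $v_1$ into $v$ and making every vertex of $N(v_1) \setminus \{v\}$ a neighbor of $v$. By Definition \ref{extending} the updated $Ex(N(v))$ is $\{Ex(N(v)) \setminus v_1\} \cup N(v_1)$, matching exactly the new neighborhood. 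Induction on the minor-action sequence keeps $Ex(N(v)) \subseteq N(v)$ throughout, so the $K_4$ minor on $Ex(N(v))$ together with $v$ produces the desired $K_5$ minor.

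For the reverse direction, assume every chromatic number $5$ graph has a $K_5$ minor, and let $G$ be a $3$-connected, $K_5$-minor-free graph of chromatic number $4$ with kernel vertex set $U$. Form $G^+$ by attaching a new vertex $v$ adjacent to every vertex of $U$. Since $U$ must receive all four colors in any $4$-coloring of $G$ by Definition \ref{kernel-vertices}, $v$ forces a fifth color, so $G^+$ has chromatic number $5$ and hence a $K_5$ minor. This $K_5$ minor must involve $v$, for otherwise the minor would already sit inside $G$, contradicting the $K_5$-minor-free hypothesis. Removing the super-vertex containing $v$ from the $K_5$ minor leaves four pairwise-adjacent super-vertices, each of which meets $N(v) = U$; translating this picture back into $G$ via Definition \ref{minor-on-vertices} and Definition \ref{extending} produces a $K_4$ minor on $Ex(U)$ in $G$, verifying the Conjecture.

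I expect the main obstacle to lie in the forward direction's bookkeeping: one must check that each type of minor action in $G'$ (deletion inside or outside $Ex(N(v))$, contraction with one, both, or neither endpoint in $Ex(N(v))$) admits a faithful counterpart in $G$ that neither destroys $v$ nor breaks the neighborhood invariant. Since only deletions inside $Ex(N(v))$ actually modify that set, the whole verification collapses to checking that the substitution $\{Ex(N(v)) \setminus v_1\} \cup N(v_1)$ in $G'$ correctly mirrors contracting $e(v,v_1)$ in $G$; the cited argument in \cite{WeiyaNote1} does exactly this case analysis, and reproducing it carefully is the chief technical burden of the proof.
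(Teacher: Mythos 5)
Your proposal is correct and takes essentially the same route as the paper's own argument (given as Theorem~\ref{equivalence-of-4-color-theorem} in the cited companion paper): choose $v$ via Theorem~\ref{G-and-G'}, apply the Conjecture to $N(v)$ in $G'=G\setminus v$, and lift the resulting $K_4$ minor on $Ex(N(v))$ to a $K_5$ minor of $G$ by replacing each deletion of $v_1\in Ex(N(v))$ in $G'$ with the contraction of $e(v,v_1)$ in $G$. The only difference is that you actually write out the reverse direction (attaching a new vertex to the kernel set $U$, forcing chromatic number $5$, and extracting a $K_4$ minor on $Ex(U)$ from the guaranteed $K_5$ minor), whereas the paper dispatches it with ``the reverse can be proved similarly.''
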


\begin{proposition}\label{no-k33}
In a child graph $G'(V',E')$, there is no $K_{3,3}$ minor, otherwise
$G'$'s parent graph $G$ has a $K_5$ minor.
\end{proposition}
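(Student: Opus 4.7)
The plan is to argue by contradiction and leverage the two previously established facts about the parent graph $G$: first, that $G$ is $4$-connected (from Corollary~\ref{removing-one-vertex-to-be-3-connected} applied to the chromatic number $5$ assumption, under the standing convention that $G$ cannot be reduced by minor actions), and second, the cited Theorem~\ref{k33-to-k5} asserting that any $4$-connected graph containing a $K_{3,3}$ minor also contains a $K_5$ minor.

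First I would suppose, toward a contradiction, that $G'$ admits a $K_{3,3}$ minor. Since $G' = G \setminus v$ is obtained from $G$ merely by deleting the single vertex $v$, any minor of $G'$ is automatically a minor of $G$: the branch sets witnessing the $K_{3,3}$ in $G'$ are subsets of $V' \subseteq V$, their induced connected subgraphs remain connected in $G$, and the required edges between branch sets remain present. Thus $G$ itself has a $K_{3,3}$ minor.

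Next I would invoke the $4$-connectivity of $G$. Because $G$ has chromatic number $5$ and by our standing assumption cannot be reduced to a smaller chromatic number $\geq 5$ graph by minor actions, Corollary~\ref{removing-one-vertex-to-be-3-connected} guarantees that $G$ is at least $4$-connected. Combining this with the $K_{3,3}$ minor produced in the previous step, Theorem~\ref{k33-to-k5} immediately yields a $K_5$ minor in $G$, which is exactly the conclusion of the proposition.

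I do not expect any serious obstacle here, since all the heavy lifting is done by the two cited results; the only thing to verify carefully is that a minor of $G'$ lifts transparently to a minor of $G$, which is immediate from the fact that $G' \subseteq G$ as subgraphs. In particular, no appeal to properties of $N(v)$ or of the $twin$-$cycle$ machinery is needed at this level of generality — those tools were used to produce a $K_{3,3}$ in Lemma~\ref{get-k-3-3}, whereas here we merely need to convert any $K_{3,3}$ in $G'$ into a $K_5$ in $G$.
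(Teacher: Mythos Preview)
Your proposal is correct and follows essentially the same route as the paper: lift the $K_{3,3}$ minor from $G'$ to $G$ via the subgraph inclusion $G'\subseteq G$, then apply Theorem~\ref{k33-to-k5}. The only difference is that you make explicit the $4$-connectivity of $G$ (via Corollary~\ref{removing-one-vertex-to-be-3-connected}) needed to invoke Theorem~\ref{k33-to-k5}, whereas the paper leaves this implicit; your framing ``toward a contradiction'' is also slightly superfluous, since the statement is really just the conditional ``if $G'$ has a $K_{3,3}$ minor then $G$ has a $K_5$ minor.''
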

\begin{proof}
If there is a $K_{3,3}$ minor in $G'$, then this $K_{3,3}$ minor
exists in $G$. By Theorem~\ref{k33-to-k5}, there is a $K_5$ minor in
$G$.

\qed
\end{proof}

\section{Simple Cycle}\label{simple-cycle}

In this section, we will show some interesting properties of a child
graph $G$.

\begin{lemma}\label{path-must-joint}
In a graph $G(V,E)$ given a vertex set $U\subseteq V$ and $|U|\geq 4$ which
has no $K_4$ minor and contained in a cycle $cy$, if
along the cycle there are $\{u_1,u_2,u_3,u_4\}\subseteq U$, then any
path $P_{1,3}$ between $u_1,u_3$ crosses with any path $p_{2,4}$
between $u_2,u_4$ in graph $G$.
\end{lemma}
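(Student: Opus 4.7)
The plan is to prove the contrapositive. Suppose there exist vertex-disjoint paths $P_{1,3}$ from $u_1$ to $u_3$ and $P_{2,4}$ from $u_2$ to $u_4$ in $G$. I would then exhibit a $K_4$ minor on $\{u_1,u_2,u_3,u_4\}\subseteq U$, contradicting the hypothesis that no $K_4$ minor exists on $U$.

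First I would work inside the subgraph $H=cy\cup P_{1,3}\cup P_{2,4}$. Note that $P_{1,3}$ cannot pass through $u_2$ or $u_4$, else it would share a vertex with $P_{2,4}$; symmetrically for $P_{2,4}$. So the four vertices $u_1,u_2,u_3,u_4$ remain distinct ``poles'' in $H$. The cycle $cy$ splits at these four vertices into four arcs $A_{12},A_{23},A_{34},A_{41}$, and together with $P_{1,3}$ and $P_{2,4}$ these give six paths, one between each pair of the four poles; this is exactly the path structure needed for a $K_4$ subdivision (and hence a $K_4$ minor). If no internal vertex is shared among these six paths, we are done immediately.

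The remaining work is to handle the possibility that $P_{1,3}$ or $P_{2,4}$ shares internal vertices with some cycle arc $A_{ij}$. I would handle this by constructing branch sets $B_1,B_2,B_3,B_4$: partition the internal vertices of each arc $A_{ij}$ between $B_i$ and $B_j$ by splitting the arc at a midpoint, split $P_{1,3}$ between $B_1,B_3$ and split $P_{2,4}$ between $B_2,B_4$ similarly, and give the cycle assignment priority for any vertex lying on both an arc and one of $P_{1,3},P_{2,4}$. Then I would verify that each $B_i$ is connected (since each piece absorbed into $B_i$ is a sub-path incident to $u_i$) and that for every pair $(i,j)$ there is an edge of $H$ between $B_i$ and $B_j$: walking $A_{12}$ from $u_1\in B_1$ to $u_2\in B_2$ must eventually cross a $B_1$--$B_2$ edge, and the diagonals $P_{1,3},P_{2,4}$ provide the $B_1$--$B_3$ and $B_2$--$B_4$ edges.

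The main obstacle is the overlap case: if an internal vertex of $A_{12}$ also lies on $P_{1,3}$, then walking $A_{12}$ could detour through $B_3$, and it is not immediate that some edge is actually $B_1$--$B_2$. To sidestep this, I expect the cleanest fix is to first replace $P_{1,3},P_{2,4}$ by a vertex-disjoint pair that minimizes $|V(P_{1,3})\cup V(P_{2,4})|$; any internal vertex of $P_{1,3}$ lying on the cycle could then be short-cut via a cycle arc without destroying disjointness from $P_{2,4}$, contradicting minimality. After this reduction $P_{1,3}$ and $P_{2,4}$ are internally disjoint from $cy$, the six paths are pairwise internally disjoint, and the $K_4$ subdivision on $\{u_1,u_2,u_3,u_4\}$ is immediate, completing the contradiction.
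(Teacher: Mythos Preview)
Your contrapositive approach---assume vertex-disjoint $P_{1,3}$ and $P_{2,4}$ and exhibit a $K_4$ minor on $\{u_1,u_2,u_3,u_4\}$---is exactly what the paper intends; its own proof is literally the single sentence ``The proof is straightforward.'' You go well beyond that by isolating the one genuine subtlety (the diagonal paths may revisit the cycle arcs), which the paper does not mention.

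There is, however, a small gap in your minimality reduction. You assert that if an internal vertex $v$ of a minimal $P_{1,3}$ lies on the cycle, you can short-cut along the incident cycle arc ``without destroying disjointness from $P_{2,4}$.'' But nothing forbids $P_{2,4}$ from also meeting that same arc between the relevant endpoint and $v$, in which case the short-cut is blocked; and even when the arc is clear of $P_{2,4}$, it may have more vertices than the segment of $P_{1,3}$ it replaces, so $|V(P_{1,3})\cup V(P_{2,4})|$ need not drop. Thus minimal pairs need not be internally disjoint from $cy$. The branch-set construction you outline first can be made to go through directly (without the reduction): grow $V(P_{1,3})$ and $V(P_{2,4})$ along the cycle into a bipartition $T_1\sqcup T_2$ of $V(H)$ by letting each cycle vertex not already on a path inherit the side of its cycle-predecessor; each arc then contains at least one $T_1$--$T_2$ edge, and a careful choice of where to split $T_1$ and $T_2$ yields the four branch sets. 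This is routine but does take a few more lines than ``straightforward'' suggests.
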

\begin{proof}
The proof is straightforward.

\qed
\end{proof}

\begin{lemma}\label{unique-series}
In a graph $G(V,E)$ given a vertex set $U\subseteq V$ and $|U|\geq 4$, if U is contained
in a cycle $cy$ and has no $K_4$ minor, then
isomorphically there is one unique series on $U$.
\end{lemma}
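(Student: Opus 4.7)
The plan is to proceed by contradiction, reducing everything to Lemma~\ref{path-must-joint}. Suppose $U$ can be embedded in two cycles $cy_1,cy_2$ of $G$ whose induced series on $U$ are not isomorphic. Since cyclic orderings of at most three points are always isomorphic up to rotation and reflection, the hypothesis $|U|\geq 4$ lets me extract a witness quadruple $\{u_1,u_2,u_3,u_4\}\subseteq U$ whose cyclic orders along $cy_1$ and along $cy_2$ differ. A four-element set has exactly three non-isomorphic cyclic orderings, so after relabeling I may assume the order on $cy_1$ is $(u_1,u_2,u_3,u_4)$ and the order on $cy_2$ is one of the two remaining possibilities, namely $(u_1,u_3,u_2,u_4)$ or $(u_1,u_2,u_4,u_3)$.

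The central observation is that in either alternative ordering on $cy_2$ the pair $\{u_1,u_3\}$ and the pair $\{u_2,u_4\}$ are both adjacent pairs of the four chosen vertices along $cy_2$, i.e.\ they fail to interleave on $cy_2$. Concretely, $cy_2$ then contains an arc joining $u_1$ to $u_3$ whose interior avoids $\{u_2,u_4\}$ together with an arc joining $u_2$ to $u_4$ whose interior avoids $\{u_1,u_3\}$; being complementary arcs of the same simple cycle with disjoint endpoint sets, these two arcs are internally disjoint. They therefore supply a $u_1$-$u_3$ path and a $u_2$-$u_4$ path in $G$ that share no vertex at all. However, on $cy_1$ the four vertices appear in the interleaved order $u_1,u_2,u_3,u_4$, so Lemma~\ref{path-must-joint}, applied with $cy_1$ as the enclosing cycle, asserts that every $u_1$-$u_3$ path in $G$ must meet every $u_2$-$u_4$ path, the desired contradiction.

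The only delicate piece is the case analysis on the non-isomorphic cyclic orderings of four labelled points: one has to verify that among the three cyclic types on four points, exactly one of the three partitions of $\{u_1,u_2,u_3,u_4\}$ into two pairs is the interleaved one, and that this distinguished partition is different for different cyclic types, so that the partition $\bigl\{\{u_1,u_3\},\{u_2,u_4\}\bigr\}$ is always interleaved on $cy_1$ but never on $cy_2$. Once this combinatorial bookkeeping is done, the rest is immediate; I do not foresee any substantive obstacle, since Lemma~\ref{path-must-joint} already encapsulates the essential topological content.
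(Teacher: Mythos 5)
Your proof is correct and follows essentially the same route as the paper's: both arguments isolate a four-element subset of $U$ on which the two series induce different cyclic orders, observe that on the second cycle the pairs $\{u_1,u_3\}$ and $\{u_2,u_4\}$ no longer interleave and hence yield two vertex-disjoint paths $P_{1,3}$ and $P_{2,4}$, and then contradict Lemma~\ref{path-must-joint} applied to the first cycle. Your write-up merely makes explicit the bookkeeping over the three cyclic types of a labelled quadruple that the paper leaves implicit (and your phrase ``complementary arcs'' should really be ``two opposite arcs of the four determined by the quadruple,'' though the disjointness conclusion is still right).
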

\begin{proof}
At first, we can define one a $s$ of $U$ on $cy$. Suppose $U$ has
another series $s'$ which is different from $s$, isomorphically
$s,s'$ have at least four vertices with different order, assume
which are $\{v_1,v_2,v_3,v_4\}$, and assume in $s$ the order is
$``v_1,v_2,v_3,v_4"$. Because $\{v_1,v_2,v_3,v_4\}$ are on cycle
$cy$, if there is no $K_4$ minor on $U$, by
Lemma~\ref{path-must-joint}, every path $P_{1,3}$ between $v_1,v_3$
crosses with one arbitrary path $P_{2,4}$ between $v_2,v_4$. So in
$s'$, beginning at $v_1$, the order of $\{v_1,v_2,v_3,v_4\}$ can
only be $``v_1,v_2,v_3,v_4''$ or $``v_1,v_4,v_3,v_2''$, which are
isomorphic. And this is a contradiction with assumption.

\qed
\end{proof}

By Lemma~\ref{unique-series}, for a certain set of vertices
$U$, {\it if there is no $K_4$ minor on $U$, we do not distinguish a
cycle $cy$ with $U\subseteq cy$ from the series of $U$ in $G$.}

\begin{theorem}\label{simple-cycle-in-3-connected}
In 3-connected graph $G(V,E)$ given a vertex set $U\subseteq V$ and $|U|\geq 4$, if there
is no $K_4$ minor on $U$ then there is a simple cycle containing
$U$.
\end{theorem}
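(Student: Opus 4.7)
The plan is to induct on $|U|$, starting from the classical base case $|U|=3$ (where any three vertices of a $2$-connected graph lie on a common simple cycle), and then add one vertex of $U$ at a time to a cycle we already have, using $3$-connectedness and Lemma~\ref{one-vertex-plus-one-cycle} to control where the new vertex attaches.

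First I would dispose of the base case: since $G$ is $3$-connected, in particular $2$-connected, any three vertices of $G$ lie on a common simple cycle (a standard consequence of Menger / ear decomposition). Inductively, assume the theorem holds for every proper subset of $U$ of size $\geq 3$. Pick any $u\in U$ and set $U'=U\setminus\{u\}$; by the inductive hypothesis applied to $U'$ (observing that the no-$K_4$-minor hypothesis on $U$ passes to the smaller set $U'$), there is a simple cycle $cy$ with $U'\subseteq cy$. Since $G$ is $3$-connected and $u\notin cy$ (if $u\in cy$ we are already done), by Menger's theorem $u$ can be joined to $cy$ by three internally disjoint paths $P_1,P_2,P_3$ ending at three distinct vertices $c_1,c_2,c_3\in cy$ and otherwise avoiding $cy$. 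Set $CR=\{c_1,c_2,c_3\}$.

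Now I apply Lemma~\ref{one-vertex-plus-one-cycle} to $cy$ with the set $U'\cup\{u\}\subseteq U$: since $|U'\cap cy|=|U'|\geq 3$, $|CR|=3$, and there is no $K_4$ minor on $U'\cup\{u\}$, the lemma forces $CR$ and $U'$ to be two clusters in any series of $cy$ containing $CR\cup U'$. In other words, the three attachment points $c_1,c_2,c_3$ appear consecutively on $cy$ (sharing at most one vertex with $U'$ at an endpoint), and $U'$ sits entirely on the complementary arc. Label so that along $cy$ the order of $CR$ is $c_1,c_2,c_3$; then the sub-arc of $cy$ from $c_1$ to $c_3$ passing through $c_2$ meets $U'$ only possibly at its endpoints $c_1,c_3$. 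Replace that sub-arc by the path $c_1\text{–}u\text{–}c_3$ (formed from $P_1$ and $P_3$, which are internally disjoint from $cy$); the result is a simple cycle $cy^{\ast}$ that contains the long arc of $cy$ (hence all of $U'$) together with $u$, i.e.\ $U\subseteq cy^{\ast}$.

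The only real obstacle I foresee is the boundary case where a cluster of $CR$ and the cluster of $U'$ share an endpoint vertex (say $c_1\in U'$): I must verify that the replaced arc still leaves $c_1$ in the new cycle. This is automatic because $c_1$ is the common endpoint of the old long arc and the new path, so it survives the reforming; any $U'$-vertex other than such shared endpoints lies in the interior of the long arc, which is untouched. Verifying this boundary behavior, and checking that applying Lemma~\ref{one-vertex-plus-one-cycle} is legal when $|U'|=3$ (the base of the induction), are the only places where care is needed; the rest of the argument is the reforming operation introduced in Section~\ref{sec-Terminology}.
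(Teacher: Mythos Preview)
Your argument is correct and follows the same reforming-via-Lemma~\ref{one-vertex-plus-one-cycle} strategy as the paper, but your handling of the start of the iteration is genuinely different and somewhat cleaner. The paper begins with a cycle through only two vertices of $U$ and must then treat the case $|cy\cap U|=2$ separately: there, if $CR$ and $U\cap cy$ fail to be two clusters, a twin-cycle on three vertices of $U$ appears, and the paper invokes Lemma~\ref{no-twin-cycle-in-3-connected} (itself a nontrivial case analysis) to obtain the contradicting $K_4$ minor. By starting instead from a cycle through three vertices of $U$, you skip that case entirely and pass directly to the iterative step. What you gain is that Lemma~\ref{no-twin-cycle-in-3-connected} is never needed; what you pay is an appeal to an external fact about $3$-connected graphs, whereas the paper's argument stays within its own lemma structure.

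One correction to your base case: it does \emph{not} follow from $2$-connectedness. In $K_{2,3}$ the three degree-$2$ vertices lie on no common cycle. The statement you want is the $k=3$ case of Dirac's theorem that in a $k$-connected graph any $k$ vertices lie on a common cycle, and its proof genuinely uses a third internally disjoint fan path. So drop the phrase ``in particular $2$-connected'' and justify the base case by $3$-connectedness directly; with that fix the induction goes through exactly as you wrote, including the boundary case you flagged where $c_1$ or $c_3$ lies in $U'$.
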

\begin{proof}
Choose $u_1,u_2\in U$, because $G$ is 3-connected, there is one
cycle $cy$ including $u_1,u_2$, i.e. we can assume $|cy\cap U|\geq
2$. If there is $u\in U$ and $u\not \in cy$, because $G$ is
3-connected, $u$ can connect with $cl$ on a set of vertices $CR$ and
$|CR|\geq 3$. Assume there is $CR=\{c_1,c_2,c_3\}$ and ordered as
$c_1,c_2,c_3$ along $cy$. $u$ can connect with $\{c_1,c_2,c_3\}$ via
disjoint pathes $P_1,P_2,P_3$ respectively. According to $|cy\cap
U|$, we have cases as below:
\begin{enumerate}
\item{$|cy\cap
U|=2$:} If $\{cy\cap U\}$ and $CR$ are two clusters of a series of
$cy$, $cy$ can be reformed to be a new cycle and includes
$\{u_1,u_2,u\}$ simultaneously, then the condition can be analyzed
as the $\geq 3$ case.

\quad If $\{cy\cap U\}$ and $CR$ are not two clusters of any series
of $cy$, then we can assume $u_1\in cy_{c_1,c_2}\setminus
\{c_1,c_2\}$ and $u_2\not \in cy_{c_1,c_2}$. Then there is one
$twin-cycle$ with $c_1,c_2$ as the crossing vertices on
$\{u,u_1,u_2\}$. Then by Lemma~\ref{no-twin-cycle-in-3-connected},
there is one $K_4$ minor on $U$, which is a contradiction.

\item{$\geq 3$:} By Lemma~\ref{one-vertex-plus-one-cycle}, $CR$ and $\{U\cap cy\}$
are two clusters of a series of $cy$. So we can reform $cy$ with
respect to $U$. I.e., one more vertex in $U$ can be included and no
other vertices in $U$ excluded. So iteratively, $cy$ can be reformed
to be a cycle containing all vertices of $U$.
\end{enumerate}

\qed
\end{proof}

\begin{proposition}\label{2-connected-structure-1}
In a 2-connected graph $G(V,E)$ given a vertex set $U\subseteq V$ which has
no $K_4$ but is contained in a cycle $cy$, then for a vertex $u\in U$ and
a vertex $u'\in N(u)\cap U\not =\emptyset$, if $U\cap \{cy_{u,u'}\setminus \{u,u'\}\}\not =\emptyset$ and
$U\cap \{cy_{u',u}\setminus \{u,u'\}\}\not =\emptyset$, then
$\{u,u'\}$ is a cut set of $G$.
\end{proposition}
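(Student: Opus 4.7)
The plan is to argue by contradiction. Suppose $\{u,u'\}$ is not a cut set of $G$; since $G$ is 2-connected, $G\setminus\{u,u'\}$ is then connected, so for the $c_1\in U\cap\{cy_{u,u'}\setminus\{u,u'\}\}$ and $c_2\in U\cap\{cy_{u',u}\setminus\{u,u'\}\}$ guaranteed by hypothesis there is a path $P$ from $c_1$ to $c_2$ in $G\setminus\{u,u'\}$. The goal is to use $P$, the cycle $cy$, and the edge $e(u,u')$ to exhibit a $K_4$ minor on $\{u,u',c_1,c_2\}\subseteq U$, contradicting the hypothesis that $U$ contains no $K_4$ minor.

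First I would extract from $P$ a ``crossing'' subpath $Q$ whose endpoints $a\in cy_{u,u'}\setminus\{u,u'\}$ and $b\in cy_{u',u}\setminus\{u,u'\}$ lie on opposite arcs and whose interior is disjoint from $cy$. To see such a $Q$ exists, decompose $P$ into its maximal subpaths with interior off $cy$; each has both endpoints on $cy\setminus\{u,u'\}$. If every one of them had both endpoints on the same arc, then the portions of $P$ between consecutive off-cycle subpaths would stay on a single arc too (the only on-cycle transitions between the two arcs pass through $u$ or $u'$, which $P$ avoids), so $P$ could not reach $c_2\in cy_{u',u}$ from $c_1\in cy_{u,u'}$. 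Hence some subpath $Q$ crosses.

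Second, I would build four pairwise disjoint connected branch sets $S_u,S_{u'},S_{c_1},S_{c_2}$ witnessing a $K_4$ minor on $\{u,u',c_1,c_2\}$. The rough recipe is: $S_u$ contains $u$ together with the interiors of the two cycle-subarcs incident to $u$ that run toward $a$ and toward $b$ but stop just before those vertices; $S_{u'}$ is defined symmetrically on the $u'$ side; $S_{c_1}$ contains $c_1$, $a$, and the subarc of $cy$ between them on the $c_1$-side; and $S_{c_2}$ contains $c_2$, $b$, the subarc of $cy$ between them on the $c_2$-side, together with the interior of $Q$. The six required edges are then furnished by $e(u,u')$, four cycle-adjacencies at the boundaries between branch sets, and the first edge of $Q$ out of $a$.

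The main obstacle will be the bookkeeping case analysis in the second step: the vertex $a$ may lie strictly between $u$ and $c_1$, coincide with $c_1$, or lie strictly between $c_1$ and $u'$ on the arc $cy_{u,u'}$, and analogously for $b$ on $cy_{u',u}$, so the subarcs assigned to each branch set shift from case to case. In every case, however, the same principle assigns each cycle-vertex uniquely, keeps the off-cycle interior of $Q$ inside $S_{c_2}$, and supplies all six pairwise edges, producing the desired $K_4$ minor on $U$ and the sought contradiction.
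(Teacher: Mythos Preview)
Your proposal is correct and follows essentially the same contrapositive idea as the paper: a path between vertices on opposite arcs that avoids $\{u,u'\}$ yields a $K_4$ minor on $\{u,u',c_1,c_2\}$. The paper's argument is shorter only because it invokes the earlier Lemma~\ref{path-must-joint} (any $P_{c_1,c_2}$ must intersect any $P_{u,u'}$, in particular the edge $e(u,u')$) in place of your explicit branch-set construction; what you wrote is effectively an inline proof of that lemma in the special case where one of the two paths is a single edge.
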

\begin{proof}
Assume $u_1\in cy_{u',u}, u_2\in cy_{u,u'}$. By
Lemma~\ref{path-must-joint}, if there is no $K_4$ minor on $U$,
every path between $u_1,u_2$ crosses with arbitrary a path between
$u,u'$. Because there is edge $e(u,u')$ which is one path between
$u,u'$, any path can only cross with it via $u$ or $u'$. Hence
$u,u'$ is a cut set of $G$.

\qed
\end{proof}

When $U$ is included by a cycle in $2$-connected graph, Proposition~\ref{2-connected-structure-1}
describes the structure of $U$. Below
Proposition~\ref{3-to-3-connected-structure}
and~\ref{3-to-2-connected-structure} describe structures of $Ex(U)$
when a vertex $u\in U$ is deleted from a $3$-connected graph $G$.

\begin{lemma}\label{constrain-degree}
In a 3-connected graph $G(V,E)$ given a vertex set $U\subseteq V$, if there exists
a vertex $u\in U$ with $|N(u)\cap U|\geq 3$, there is a $K_4$ minor on $U$.
\end{lemma}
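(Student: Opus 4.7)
The plan is to construct the $K_4$ minor directly using the given vertex $u$ together with three of its $U$-neighbors as the four branch sets. Pick three distinct vertices $v_1,v_2,v_3\in N(u)\cap U$ and work in the auxiliary graph $G'=G\setminus u$. The crucial structural fact is that $G'$ is $2$-connected, because removing a single vertex from a $3$-connected graph drops the connectivity by at most one. I would state this observation first, as it is the only connectivity input the argument needs.

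Next I would invoke the already-established Lemma~\ref{k-3-minor-on-3-vertices}, which guarantees that any three vertices of a $2$-connected graph carry a $K_3$ minor. Applied to $\{v_1,v_2,v_3\}$ in $G'$, this yields three pairwise-disjoint connected branch sets $B_1,B_2,B_3\subseteq V\setminus\{u\}$ with $v_i\in B_i$ and, for each pair $i\ne j$, an edge of $G'$ joining $B_i$ to $B_j$. These three sets together with the singleton branch set $\{u\}$ form four pairwise-disjoint connected subgraphs of $G$, and $u$ sends the edge $uv_i$ into $B_i$ for each $i\in\{1,2,3\}$, so every pair of the four branch sets is joined by an edge of $G$. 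Hence we have a $K_4$ minor in $G$ whose branch sets contain $u, v_1, v_2, v_3\in U$ respectively; by Definition~\ref{minor-on-vertices} this is a $K_4$ minor on $U$.

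There is essentially no obstacle in this argument: the hardest step, producing the $K_3$ minor on three chosen vertices of a $2$-connected graph, has already been dispatched in Lemma~\ref{k-3-minor-on-3-vertices}, and the adjacency $u\sim v_i$ in the hypothesis is exactly what is needed to attach $\{u\}$ as the fourth branch set. Consequently I expect the proof to be a short two-step construction with no case analysis.
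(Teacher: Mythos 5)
Your proof is correct, but it takes a genuinely different route from the one in the paper. The paper argues by contradiction through its structural machinery: assuming no $K_4$ minor on $U$, Theorem~\ref{simple-cycle-in-3-connected} places $\{u,u_1,u_2,u_3\}$ on a common simple cycle, and then Proposition~\ref{2-connected-structure-1}, applied to $u$ and its ``middle'' neighbour on that cycle, forces a $2$-vertex cut set, contradicting $3$-connectivity. You instead give a direct construction: delete $u$, note $G\setminus u$ is $2$-connected, pull a $K_3$ minor onto the three chosen neighbours via Lemma~\ref{k-3-minor-on-3-vertices}, and attach $\{u\}$ as a fourth branch set using the edges $uv_i$; the pigeonhole point that each of the three branch sets contains exactly one $v_i$ makes the ``on $U$'' condition of Definition~\ref{minor-on-vertices} immediate. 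Your argument is in fact the same as the proof of Corollary~\ref{cor-k-3-minor-on-3-vertices} in the companion paper, so you are importing a lemma from outside the present section rather than from the cycle/twin-cycle apparatus; that is legitimate (no circularity, since Lemma~\ref{k-3-minor-on-3-vertices} rests only on $2$-connectivity) and arguably preferable, as it avoids depending on Theorem~\ref{simple-cycle-in-3-connected}, which is the heaviest result of the section, and it produces the $K_4$ minor explicitly rather than by contradiction. What the paper's route buys is uniformity: it reuses the simple-cycle theorem that the surrounding results are all built on.
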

\begin{proof}
Suppose there is such a vertex $u$ with $\{u_1,u_2,u_3\}\subseteq
\{N(u)\cap U\}$. By Theorem~\ref{simple-cycle-in-3-connected},
$\{u,u_1,u_2,u_3\}$ are included by a simple cycle. Then by
Proposition~\ref{2-connected-structure-1}, there is a $K_4$ minor on
$U$.

\qed
\end{proof}

\begin{proposition}\label{3-to-3-connected-structure}
In 3-connected graph $G(V,E)$, $U\subseteq V$, $|U|\geq 4$, $u\in
U$, if there is no $K_4$ minor on $Ex(U)$, then in graph
$G'=G\setminus u$, $U'=\{U\setminus u\}\cup N(u)$, if $G'$ is
3-connected, in $G'$ there is no $twin-cycle$ on $U'$; and there is
one simple cycle $cy$ with $U'\subseteq cy$.
\end{proposition}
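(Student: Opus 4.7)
The plan is to observe that $U'$ is precisely the extension $Ex(U)$ obtained by applying the single minor action of deleting $u$ to $G$, so the hypothesis ``no $K_4$ minor on $Ex(U)$'' translates directly into ``no $K_4$ minor on $U'$ in $G'$.'' Once that is on the table, the two conclusions (no twin-cycle on $U'$, and existence of a simple cycle containing $U'$) should fall out immediately from the machinery already developed: Lemma \ref{no-twin-cycle-in-3-connected} and Theorem \ref{simple-cycle-in-3-connected}. The whole argument is therefore a matter of verifying that the hypotheses of those two results are met in $G'$ with respect to $U'$.

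The first substantive step is to certify that $|U'|\geq 4$, since both Lemma \ref{no-twin-cycle-in-3-connected} and Theorem \ref{simple-cycle-in-3-connected} explicitly require a set of size at least $4$. I have $|U\setminus u|\geq 3$ from $|U|\geq 4$, and $|N(u)|\geq 3$ from $3$-connectedness of $G$, so the only way to have $|U'|\leq 3$ is to have $N(u)\subseteq U\setminus u$ with $|N(u)|=3$, i.e.\ $|N(u)\cap U|\geq 3$. But Lemma \ref{constrain-degree} applied in the $3$-connected graph $G$ says that a vertex $u\in U$ with $|N(u)\cap U|\geq 3$ forces a $K_4$ minor on $U$; this $K_4$ minor on $U$ in $G$ survives under the extension bookkeeping as a $K_4$ minor on $Ex(U)$ in $G'$ (each branch set either stays the same or grows by absorbing $N(u)$), contradicting the hypothesis. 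Hence $|N(u)\cap U|\leq 2$, so $|N(u)\setminus U|\geq 1$, and $|U'|\geq 3+1=4$.

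With $|U'|\geq 4$, $G'$ $3$-connected, and no $K_4$ minor on $U'$ in $G'$ all in hand, I apply Lemma \ref{no-twin-cycle-in-3-connected} contrapositively: a twin-cycle on $U'$ in $G'$ would produce a $K_4$ minor on $U'$, which is impossible. This gives the first clause. For the second clause, Theorem \ref{simple-cycle-in-3-connected} is stated in exactly the form I need ($3$-connected host, target set of size $\geq 4$, no $K_4$ minor on the target), so it delivers a simple cycle $cy$ with $U'\subseteq cy$ directly.

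The main obstacle, and the step where I expect the reader to pause, is the bookkeeping needed to show the ``no $K_4$ minor on $Ex(U)$'' hypothesis transfers cleanly to statements about $U$ in $G$ as well as $U'$ in $G'$ \emph{simultaneously} — in particular, invoking Lemma \ref{constrain-degree} requires ruling out a $K_4$ minor on $U$ in the original $G$, whereas the hypothesis is phrased in the reduced graph. The cleanest way to handle this is to treat $Ex$ as monotone under absorption of deleted-vertex neighborhoods: any $K_4$ minor on $U$ in $G$ yields, after deleting $u$ and redistributing $u$'s role to a vertex of $N(u)\cap U$, a $K_4$ minor on $Ex(U)=U'$ in $G'$. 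Everything else is routine invocation of the earlier lemmas.
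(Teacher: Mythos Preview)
Your overall approach matches the paper's exactly: establish $|U'|\geq 4$ via Lemma~\ref{constrain-degree}, note that $U'$ is an extension of $U$ so the hypothesis forbids a $K_4$ minor on $U'$, then apply Lemma~\ref{no-twin-cycle-in-3-connected} and Theorem~\ref{simple-cycle-in-3-connected} in $G'$.

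However, the ``bookkeeping'' paragraph contains a real error. You read the hypothesis ``no $K_4$ minor on $Ex(U)$'' as a statement only about the reduced graph $G'$, and then try to push a $K_4$ minor on $U$ in $G$ forward to a $K_4$ minor on $U'$ in $G'$ by ``redistributing $u$'s role to a vertex of $N(u)\cap U$.'' That transfer is not valid. Concretely, take $|U|=4$ with $N(u)=U\setminus\{u\}$ (so $|N(u)\cap U|=3$, the very case you are trying to exclude). Then $U'=(U\setminus\{u\})\cup N(u)=U\setminus\{u\}$ has only three elements, and there cannot be a $K_4$ minor ``on'' a three-element set, since the four disjoint branch sets would each have to meet $U'$. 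So the $K_4$ minor on $U$ in $G$ produced by Lemma~\ref{constrain-degree} need not survive in $G'$, and your contradiction does not close.

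The fix is the reading the paper uses. Definition~\ref{extending} states explicitly that $U$ is trivially an extension of itself when no action is applied. Hence ``no $K_4$ minor on $Ex(U)$'' already forbids a $K_4$ minor on $U$ in the original graph $G$, and Lemma~\ref{constrain-degree} applies contrapositively in $G$ with no transfer argument needed. The paper's proof does precisely this in one line: ``By Lemma~\ref{constrain-degree}, $|N(u)\cap U|\leq 2$.'' Once you adopt this reading, your remaining steps are correct and coincide with the paper's.
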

\begin{proof}
By Lemma~\ref{constrain-degree}, $|N(u)\cap U|\leq 2$, so
$|N(u)\setminus U|\geq 1$. Hence there is $|U'|\geq 4$. By
Definition~\ref{extending}, $U'$ is a extension of $U$. By
assumption, there is no $K_4$ minor on $U'$. If $G'$ is 3-connected,
by Lemma~\ref{no-twin-cycle-in-3-connected}, there is no
$twin-cycle$ on $U'$; by Lemma~\ref{simple-cycle-in-3-connected},
there is one simple cycle $cy$ in $G'$ with $U'\subseteq cy$.

\qed
\end{proof}

\begin{proposition}\label{3-to-2-connected-structure}
In a 3-connected graph $G(V,E)$ given a vertex set $U\subseteq V$ with $|U|\geq
4$ and a vertex $u\in U$, set a graph $G'=G\setminus
u$ and $U'=\{U\setminus u\}\cup N(u)$ when there is no $K_4$ minor on $Ex(U)$.
If $G'$ is 2-connected, then on $U'$ there is a simple cycle $C$ satisfying:
\begin{enumerate}
\item $\{U\setminus u\}\subseteq C$.
\item if $u_1\in U'\setminus U$, then $u_1\in C$ otherwise $u_1$ connects with $C$ at
exact two vertices. The two vertices form a cut set of $G'$ by which $\{u_1\}$ and
$\{U\setminus u\}$ are isolated.
\end{enumerate}
\end{proposition}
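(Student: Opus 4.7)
The plan is to first lift a cycle through $U\setminus u$ from $G$ down to $G'$, and then incorporate each remaining neighbor of $u$ either directly onto that cycle or as a pendant attached through a $2$-cut.

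First I would apply Theorem~\ref{simple-cycle-in-3-connected} to the $3$-connected graph $G$ itself. Since $|U|\geq 4$ and no $K_4$ minor lives on $U$ (as $U\subseteq Ex(U)$), the vertex set $U$ lies on a simple cycle $cy_G$ of $G$. Deleting $u$ breaks $cy_G$ into a path $P$ of $G'$ whose endpoints $n_1,n_2$ are the two neighbors of $u$ on $cy_G$ and whose interior contains every vertex of $U\setminus u$. Because $G'$ is $2$-connected, Menger's theorem supplies a second $n_1$--$n_2$ path $P'$ in $G'$ internally disjoint from $P$. The union $C_0 = P\cup P'$ is then a simple cycle of $G'$ containing $U\setminus u$.

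Second I would insert each $u_1\in N(u)\setminus U$ not already on $C_0$ by examining the maximum number of internally vertex-disjoint paths from $u_1$ to $C_0$ in $G'$. Since $G'$ is $2$-connected, this number is at least two. In the case of three or more, the endpoints on $C_0$ form a set $CR$ with $|CR|\geq 3$; because $Ex(U)$ carries no $K_4$ minor, Lemma~\ref{one-vertex-plus-one-cycle} forces $CR$ and $C_0\cap U'$ to appear as two clusters in every series of $C_0$ containing $CR\cup U'$, which is exactly the hypothesis needed to reform $C_0$ through $u_1$ without expelling any vertex of $U\setminus u$ or any previously absorbed neighbor of $u$. In the case of exactly two, Menger's theorem yields a $2$-vertex cut of $G'$ separating $u_1$ from $C_0$; pushing this cut onto the sink side gives two vertices $a,b\in C_0$ whose removal isolates $u_1$ from $C_0\setminus\{a,b\}$, and in particular from $U\setminus u$, matching the second clause of the proposition.

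The main obstacle is showing that successive reforming steps do not conflict: after reforming around one $u_1$, the arc of $C_0$ that was replaced must not contain any vertex of $U\setminus u$ or any previously absorbed neighbor of $u$. This is precisely what the cluster decomposition from Lemma~\ref{one-vertex-plus-one-cycle} provides, since the replaced arc lies strictly inside the $CR$-cluster and is therefore disjoint from the other cluster $C_0\cap U'$. A secondary technical point is that the no-twin-cycle conclusion of Lemma~\ref{no-twin-cycle-in-3-connected} is stated for $3$-connected graphs and cannot be invoked directly in the merely $2$-connected $G'$; I would replace it by Lemma~\ref{path-must-joint} together with Lemma~\ref{unique-series}, which only require that $U'$ sits on one cycle and carries no $K_4$ minor, and which suffice to rule out a twin-cycle obstruction to the incremental insertion procedure.
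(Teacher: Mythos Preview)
Your absorption of each $u_1\in N(u)\setminus U$ --- reforming through $u_1$ via Lemma~\ref{one-vertex-plus-one-cycle} when three or more attachment points exist, and reading off a $2$-cut otherwise --- is correct and coincides with the paper's second half.

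The gap is in your construction of the initial cycle through $U\setminus u$. Menger's theorem in a $2$-connected graph promises \emph{some} pair of internally disjoint $n_1$--$n_2$ paths; it does \emph{not} promise that a \emph{prescribed} path $P$ can serve as one member of such a pair. A path in a $2$-connected graph need not lie on any cycle at all: in $G'=C_6$ together with the chord $v_1v_4$, the Hamiltonian path $P=v_2v_3v_4v_1v_6v_5$ cannot be closed up, because $v_2v_5\notin E(G')$ and every other vertex of $G'$ is already interior to $P$. Your sentence ``Menger's theorem supplies a second $n_1$--$n_2$ path $P'$ in $G'$ internally disjoint from $P$'' is therefore a non-sequitur, and with it the definition of $C_0=P\cup P'$ collapses.

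The paper never tries to descend a cycle from $G$. It builds $C$ entirely inside $G'$, and the device you set aside as a ``secondary technical point'' is in fact the key step: a twin-cycle on $U\setminus u$ in $G'$ is also a twin-cycle on $U$ in $G$, and since $G$ \emph{is} $3$-connected, Lemma~\ref{no-twin-cycle-in-3-connected} then converts it into a $K_4$ minor on $U$, contrary to hypothesis. Hence twin-cycles on $U\setminus u$ are excluded in $G'$ even though $G'$ is only $2$-connected. With that in hand the paper starts from a $K_3$ division on three vertices of $U\setminus u$ (available by $2$-connectedness of $G'$), upgrades it to a genuine cycle through those three vertices by a short case analysis that uses the no-twin-cycle fact, and then reforms exactly as in the proof of Theorem~\ref{simple-cycle-in-3-connected} to pick up the remaining vertices of $U\setminus u$. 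Replacing your Menger step with this construction repairs the argument; the rest of your proof then agrees with the paper's.
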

\begin{proof}
By Definition~\ref{extending}, $U'$ is a extension of $U$, so there
is no $K_4$ on $U'$. If there is a $twin-cycle$ on $U\setminus u$ in
$G'$, then this $twin-cycle$ exists in $G$ on $U$, and by
Lemma~\ref{no-twin-cycle-in-3-connected}, there is one $K_4$ minor
on $U$ which is a contradiction. Hence there is no $twin-cycle$ on
$U\setminus u$ in $G'$.

Because $G'$ is 2-connected, by using reforming method,
easy to prove
there is one $K_3$ division on $U\setminus u$. Assume the $K_3$
division includes vertices $\{v_1,v_2,v_3\}\subseteq \{U\setminus
u\}$. If the division is not a cycle, then we assume the division
has a cycle $cy$ on $v_1,v_2,v$ and $v_3$ connects with the cycle at
$v$. Because $G'$ is 2-connected, $v_3$ can connect with the cycle
at a different vertex $v'$. If $v'\in cy_{v_1,v}\setminus v$ on the
division, then we can form a cycle along $v_1,v',v_3,v,v_2,v_1$; the
same for $v'\in P_{v,v_2}\setminus v$. If $v'\in
P_{v_1,v_2}\setminus \{v_1,v_2\}$, then treat $v,v'$ as the two
crossing vertices, there is one $twin-cycle$ with three half-cycles
on $\{v_1,v_2,v_3\}$ respectively, which is a contradiction with no
$twin-cycle$ on $U\setminus u$. Hence we can assume there is a cycle
on $\{v_1,v_2,v_3\}$. Similar as proof of
Lemma~\ref{simple-cycle-in-3-connected}, we can reform to get a
cycle $C$ including all vertices of $U\setminus u$.

If there is $u_1\in U'\setminus U$, because $G'$ is 2-connected,
there are two cases:
\begin{enumerate}
\item{$u_1$ connects with $C$ at $\geq 3$ vertices:} assume at
$W$. Then by Lemma~\ref{one-vertex-plus-one-cycle}, $W$ and
$\{U'\cap C\}$ are two clusters. Then $C$ can be reformed to contain
$u_1$ and do not exclude any vertex in $U'\cap C$ out.
\item{$u_1$ connects with $C$ at $2$ vertices:} assume at
$W=\{c_1,c_2\}$. Then $\{c_1,c_2\}$ is a cut-set of $G'$ and in
graph $G'\setminus \{c_1,c_2\}$, $u_1$ and $U\setminus u$ are in
different components.
\end{enumerate}

\qed
\end{proof}

Proposition~\ref{distribution-of-k-4-in-2-connected} show while
keeping $K_4$ minor, a $2$-connected graph can be reduced by
applying minor actions.

\begin{proposition}\label{distribution-of-k-4-in-2-connected}
In a 2-connected graph $G(V,E)$, let $\{v_1,v_2\}$ be a cut-set and in
$G'=G\setminus \{v_1,v_2\}$ $C_1,C_2,...,C_x$ be components. If
$G$ has a $K_4$ minor, then there is a $K_4$ minor on $C_i\cup
\{v_1,v_2\}, i\in \{1,2,...,x\}$.
\end{proposition}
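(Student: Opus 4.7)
The plan is to analyze where the branch sets of the given $K_4$ minor live with respect to the cut $\{v_1,v_2\}$ and the components $C_1,\ldots,C_x$, and observe that the minor itself already witnesses a $K_4$ minor on $C_i \cup \{v_1,v_2\}$ for some $i$ in the sense of Definition~\ref{minor-on-vertices} (each branch set only needs to meet $C_i \cup \{v_1,v_2\}$, not lie inside it). So I would start by fixing four pairwise-disjoint connected branch sets $B_1,B_2,B_3,B_4$ realizing the $K_4$ minor in $G$, with every pair $B_m,B_n$ joined by at least one edge of $G$.

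The first structural step is a simple counting observation. Because the $B_m$ are pairwise disjoint and $|\{v_1,v_2\}|=2$, at most two of them can contain a vertex of $\{v_1,v_2\}$. Call those the \emph{impure} branch sets and call the remaining $B_m$ (of which there are at least two) \emph{pure}. Each pure $B_m$ is a connected subgraph of $G\setminus \{v_1,v_2\}$, and hence is contained in exactly one component $C_{j(m)}$.

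The second step pins down which component the pure branch sets live in. For any two pure $B_m,B_n$, the $K_4$ structure forces an edge of $G$ between them. But any edge of $G$ either lies entirely inside a single $C_j$ or has an endpoint in $\{v_1,v_2\}$; since pure branch sets avoid $\{v_1,v_2\}$, an edge between $B_m \subseteq C_{j(m)}$ and $B_n \subseteq C_{j(n)}$ forces $j(m)=j(n)$. Applying this to every pair of pure branch sets shows they all lie in a common component, call it $C_i$.

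Setting $U = C_i \cup \{v_1,v_2\}$, the conclusion is then immediate: every pure $B_m$ meets $U$ via $C_i$, and every impure $B_m$ meets $U$ via $v_1$ or $v_2$. By Definition~\ref{minor-on-vertices} this is a $K_4$ minor on $U$, which is what the proposition asserts. I do not expect a real obstacle here; the only thing to be careful about is to read ``$K_4$ minor on $C_i\cup\{v_1,v_2\}$'' in the sense of Definition~\ref{minor-on-vertices} (branch sets need only \emph{intersect} $U$, and may freely use vertices from other components), since the strict interpretation would actually be false (a 2-sum of two copies of $K_4-e$ along $\{v_1,v_2\}$ gives a ready counterexample).
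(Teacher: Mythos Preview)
Your argument is correct and follows the same underlying idea as the paper's proof: only two branch sets can meet the cut $\{v_1,v_2\}$, the remaining (``pure'') branch sets each sit in a single component, and adjacency between pure branch sets forces them into one common $C_i$. The paper phrases this slightly differently---it first asserts the minor is ``on at most two components'' and then, when a representative $c_1$ lands in the wrong component, argues its branch set must contain $v_1$ or $v_2$ so that $c_1$ can be relocated there---but this is just a reorganisation of the same observation, and your pure/impure bookkeeping is in fact the cleaner way to carry it out. Your closing caveat about reading ``$K_4$ minor on $C_i\cup\{v_1,v_2\}$'' in the sense of Definition~\ref{minor-on-vertices} is exactly right and matches how the paper uses the phrase.
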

\begin{proof}
Because $\{v_1,v_2\}$ is cut-set, every $K_4$ minor is on at most
two components. Assume there is a $K_4$ minor on vertices
$\{c_1,c_2,c_3,c_4\}\subseteq C_1\cup C_2\cup \{v_1,v_2\}$, then the
only possibility is $c_1\in C_1$ and $\{c_2,c_3,c_4\}\subseteq
C_2\cup \{v_1,v_2\}$, and $\{v_1,v_2\}\setminus \{c_2,c_3,c_4\}\not
=\emptyset$. In order to get a $K_4$ minor, $c_1$ must be contracted
with $v_1$ or $v_2$, hence a $K_4$ minor on $C_2\cup \{v_1,v_2\}$
can be constructed.

\qed
\end{proof}


\section{Wagner's Equivalence Theorem}\label{wagner}

The results in section~\ref{simple-cycle} can be applied in
induction method to prove more interesting and useful results.
Next as an exercise we will show how to use this induction to prove
Wagner's Equivalence Theorem. Note that our proof does
not depend on Kuratowski's Theorem.

\begin{definition}\label{formal-graph}
In a graph $G(V,E)$ given a non-empty vertex set $U\subseteq V$, we call
$G$ is the $U$'s formal graph, if
there is no $K_5$ or $K_{3,3}$ minor after adding a vertex $v$ to $G$ with
$N(v)=U$.
\end{definition}

\begin{lemma}\label{keeping-formal}
If a graph $G(V,E)$ is the formal graph of $U\not=\emptyset$, given a vertex $u\in U$
and $U'$ the extension of $U$ in graph $G'=G\setminus u$,
then $G'$ is a formal graph of $U'$.
\end{lemma}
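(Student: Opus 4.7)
The plan is to reduce the statement to a single edge-contraction observation. Unpacking Definition~\ref{formal-graph}, what I want to show is that when we add a fresh vertex $v'$ to $G'=G\setminus u$ with $N(v')=U'$, the resulting graph has neither a $K_5$ nor a $K_{3,3}$ minor. Since $u\in U$, by Definition~\ref{extending} case (i), $U'=(U\setminus\{u\})\cup N_G(u)$.

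First I would introduce the ``witness'' graph $H=G\cup v$, where $v$ is a new vertex with $N_H(v)=U$; by hypothesis $H$ has no $K_5$ or $K_{3,3}$ minor. Because $u\in U=N_H(v)$, the edge $e(v,u)$ exists in $H$. The next step is to contract that single edge: let $H^{\ast}=H/e(v,u)$, and denote the contracted vertex by $v^{\ast}$. Then
\[
N_{H^{\ast}}(v^{\ast})\;=\;\bigl(N_H(v)\setminus\{u\}\bigr)\cup\bigl(N_H(u)\setminus\{v\}\bigr)\;=\;\bigl(U\setminus\{u\}\bigr)\cup N_G(u)\;=\;U',
\]
and the rest of $H^{\ast}$ is exactly $G\setminus u=G'$. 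So $H^{\ast}$ is (isomorphic to) $G'\cup v'$ with $N(v')=U'$, which is precisely the graph appearing in Definition~\ref{formal-graph} when we test whether $G'$ is formal for $U'$.

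The final step is the minor-closure argument: $H^{\ast}$ is obtained from $H$ by a single edge contraction, so every minor of $H^{\ast}$ is a minor of $H$. Since $H$ has neither a $K_5$ nor a $K_{3,3}$ minor, the same holds for $H^{\ast}\cong G'\cup v'$, and hence $G'$ is a formal graph of $U'$ by Definition~\ref{formal-graph}.

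I do not expect a serious obstacle here; the only thing requiring care is verifying the neighborhood identity $N_{H^{\ast}}(v^{\ast})=U'$ matches the extension rule of Definition~\ref{extending} exactly, and noting that the construction genuinely uses case (i) (vertex deletion of an element of $U$) rather than the contraction or trivial cases. Once that bookkeeping is done, the lemma follows immediately from the fact that minors preserve the absence of $K_5$ and $K_{3,3}$ minors.
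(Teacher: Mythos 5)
Your proposal is correct and follows essentially the same route as the paper: the paper's own proof also adds $v$ with $N(v)=U$ to $G$ and contracts $v$ with $u$ to obtain exactly the test graph $G'\cup v'$, concluding by minor-closure (phrased as a contradiction rather than directly). Your version just spells out the neighborhood bookkeeping $N_{H^{\ast}}(v^{\ast})=U'$ more explicitly.
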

\begin{proof}
The non-trivial condition is $U'=\{U\setminus u\}\cup N(u)$. Suppose
after adding $v'$ to $G'$ to get a new graph $G'_v$, there is $K_5$
or $K_{3,3}$ minor. Then in graph $G$, if we add $v$ with $N(v)=U$,
then contract $v$ with $u$ to be vertex $v'$, the new graph is
$G'_v$, hence there is $K_5$ or $K_{3,3}$ minor which is a
contradiction with assumption.

\qed
\end{proof}

\begin{lemma}\label{twin-cycle-2-k33}
In a graph $G(V,E)$ given a vertex set $U\subseteq V$, if $G$ is the $U$'s formal graph
then there is no $twin-cycle$ on $U$.
\end{lemma}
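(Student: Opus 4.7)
The plan is to reduce the statement directly to Lemma~\ref{get-k-3-3}, which already says that whenever a vertex $v$ has a twin-cycle on its neighborhood $N(v)$, the ambient graph contains a $K_{3,3}$ minor. So the whole argument is to manufacture exactly that situation out of the hypotheses and then read off a contradiction with Definition~\ref{formal-graph}.

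Concretely, I would argue by contradiction. Suppose $G$ is the formal graph of $U$ but a twin-cycle $T$ exists on $U$ in $G$, with crossing vertices $v_1,v_2$, common path $P$, and a chosen representative $u_i \in U$ from each of the three half-cycles $i=1,2,3$. Form the graph $G^{+}$ by attaching a fresh vertex $v$ with $N(v)=U$; this is the very graph whose minor-freeness is the content of Definition~\ref{formal-graph}. Since $T$ sits inside $G\subseteq G^{+}$ and $v\notin T$, the twin-cycle $T$ on $U = N_{G^{+}}(v)$ is still a twin-cycle on $N(v)$ in $G^{+}$.

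Now invoke Lemma~\ref{get-k-3-3} applied to the vertex $v$ in the graph $G^{+}$: the existence of a twin-cycle on $N(v)$ yields a $K_{3,3}$ minor (explicitly, the bipartition $\{v,v_1,v_2\}$ versus $\{u_1,u_2,u_3\}$ obtained by contracting each half-cycle to its chosen $u_i$). But by Definition~\ref{formal-graph}, adding such a $v$ to $G$ produces no $K_5$ or $K_{3,3}$ minor, a contradiction. Hence no twin-cycle on $U$ can exist.

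The only step that requires any care is checking that the $K_{3,3}$ minor guaranteed by Lemma~\ref{get-k-3-3} is indeed the forbidden one referenced in Definition~\ref{formal-graph}; this is immediate because both refer to the same augmented graph $G\cup\{v\}$ with $N(v)=U$. No induction, no case analysis on how $U$ meets $T$, and no new combinatorial content beyond what is already packaged in Lemma~\ref{get-k-3-3} is needed, so I do not anticipate any substantive obstacle.
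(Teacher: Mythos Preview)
Your proposal is correct and follows essentially the same approach as the paper: assume a twin-cycle on $U$, adjoin a vertex $v$ with $N(v)=U$, invoke Lemma~\ref{get-k-3-3} to produce a $K_{3,3}$ minor in the augmented graph, and contradict Definition~\ref{formal-graph}. The paper's own proof is a one-line version of exactly this argument.
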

\begin{proof}
If there is a $twin-cycle$ $T$ on $U$, after adding $v$ to $G$ with
$N(v)=U$, by Lemma~\ref{get-k-3-3}, there is a $K_{3,3}$ minor,
which is a contradiction with Definition~\ref{formal-graph}.

\qed
\end{proof}

\begin{theorem}\label{U-on-perimeter}
In a connected graph $G(V,E)$ given a vertex set $U\subseteq V$, if $G$ is $U$'s formal
graph,
then $G$ is planar and $U$ is contained by $G$'s boundary of an external face. 
\end{theorem}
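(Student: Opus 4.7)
The plan is to argue by strong induction on $|V|$. The base case $|V|=1$ is trivial: the unique vertex lies on the external face of the trivial planar embedding. For the inductive step, since $U$ is non-empty by Definition~\ref{formal-graph}, pick any $u\in U$, set $G'=G\setminus u$, and let $U'=(U\setminus u)\cup N(u)$ be the extension in the sense of Definition~\ref{extending}. By Lemma~\ref{keeping-formal}, $G'$ is formal of $U'$; should $G'$ fail to be connected, each connected component inherits the formal property on its intersection with $U'$, because any forbidden minor inside a single component would still be a forbidden minor in $G'$. Applying the induction hypothesis componentwise produces a planar embedding of $G'$ in which $U'$ lies on the boundary cycle $C$ of an external face.

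It remains to reinsert $u$ in the external region of $G'$ with edges to its neighbors $N(u)\subseteq U'\subseteq C$ so that $U$ ends up on the boundary of a single face of the resulting $G$, which may then be declared external. Planarity is immediate because $u$ and its edges fan out into the external region without crossings. The essential combinatorial claim is the following: there is an arc $A_l$ of $C$ between two cyclically consecutive neighbors $a_l,a_{l+1}\in N(u)$ whose closure contains all of $U\setminus u$.

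I would establish this claim by combining two arguments, both using that $G$ is formal of $U$. First, a $K_5$-minor argument: if three distinct $a_p,a_q,a_r\in N(u)$ all lie in $U$, then in $G\cup\{v\}$ (where $v$ is the formal vertex with $N(v)=U$) the five vertices $\{u,v,a_p,a_q,a_r\}$ form a $K_5$ minor, using the direct edges $v$--$u$, $v$--$a_i$, $u$--$a_i$ together with the three internally disjoint arcs of $C$ that connect $a_p,a_q,a_r$ pairwise; this contradicts the formal property, so $|U\cap N(u)|\leq 2$. Second, a twin-cycle argument: for any pair $a_p,a_q\in N(u)$, the two arcs of $C$ between $a_p$ and $a_q$ together with the path $a_p$--$u$--$a_q$ form a twin-cycle whose three half-cycles are the two arcs and the single-vertex interior $\{u\}$. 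Lemma~\ref{twin-cycle-2-k33} forbids such a twin-cycle from being supported on $U$, so at most one of the two arcs may contain a vertex of $U\setminus\{u,a_p,a_q\}$. Quantifying this over all pairs of $N(u)$ forces the middle vertices $(U\setminus u)\setminus N(u)$ to lie entirely inside a single arc $A_l$ between consecutive elements of $N(u)$, and forces each element of $U\cap N(u)$ to coincide with an endpoint $a_l$ or $a_{l+1}$ of that arc.

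Once $U\setminus u\subseteq\overline{A_l}$ is known, draw $u$ in the external region of $G'$ so that the face of $G$ bounded by $u$, $a_l$, the arc $A_l$, and $a_{l+1}$ becomes external; its boundary then contains all of $U$, finishing the induction. The hardest step, I expect, is pulling the per-pair twin-cycle constraints together with the bound $|U\cap N(u)|\leq 2$ into the single-arc statement, in particular pinning down the position of $U\cap N(u)$ relative to the middle vertices. A secondary technical obstacle is the bookkeeping when $G\setminus u$ is disconnected, where the individual planar embeddings of the components must be arranged so that reinserting $u$ still yields a single face of $G$ containing all of $U$.
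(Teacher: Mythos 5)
Your inductive step in the case where $G'=G\setminus u$ is $2$-connected is essentially the paper's Case~1: delete $u\in U$, invoke Lemma~\ref{keeping-formal} to preserve the formal property, embed $G'$ by induction with $U'$ on the outer cycle, and use the twin-cycle obstruction (Lemma~\ref{twin-cycle-2-k33} via Lemma~\ref{get-k-3-3}) to force $U\setminus u$ into the closure of a single arc between consecutive neighbours of $u$. Your explicit $K_5$ argument for $|U\cap N(u)|\leq 2$ is a welcome sharpening of the paper's terse remark that ``$N(u)$ and $U\setminus N(u)$ are two clusters,'' and the derivation of the single-arc claim from the per-pair twin-cycle constraints does go through when the outer boundary is a genuine cycle. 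That core is sound and matches the paper.

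The gap is in the low-connectivity cases, which you dismiss as componentwise inheritance plus ``bookkeeping'' but which the paper treats as two of its three main cases. Your central combinatorial claim presupposes that the external face of $G'$ is bounded by a simple cycle $C$ on which ``the two arcs between $a_p$ and $a_q$'' are well defined. If $G$ is only $1$- or $2$-connected, then $G'$ may be disconnected or may have cut vertices; the outer boundary of an embedded component is then a closed walk that can repeat vertices, a pair of neighbours of $u$ need not determine two arcs, and when $u$ is a cut vertex its neighbours are spread over the outer faces of several components, so the single-arc statement does not even parse. This is precisely what the paper's second and third cases address: it splits $G$ along a cut set $W$ with $|W|\leq 2$, contracts the rest of the graph onto $W$ so that each piece $C_i\cup W$ remains a formal graph of its extension of $U$, applies induction to each piece, glues the resulting embeddings along $W$, and uses the twin-cycle lemma to rule out a third component at a $2$-cut. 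Your proof needs an analogous decomposition step (or a prior reduction of the general statement to the $3$-connected case); as written the induction does not close.
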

\begin{proof}
We prove this by induction on $|V|$. When $|V|\leq 3$, easy to
verify the conclusion holds.

When $|V|>3$, there are three cases:
\begin{enumerate}
\item {$G$ is $3$-connected:} $u\in U$, set $G'(V',E')=G\setminus u$,
$U'=\{U\setminus u\}\cup N(u)\not= \emptyset$ is an extension of
$U$. By Lemma~\ref{keeping-formal}, $G'$ is a formal graph of $U'$.
By induction, $G'$ is planar, and $U'$ is on boundary of an external
face of $G'$. If restore $u$, $G$ is still planar. And $N(u)$ and
$U\setminus N(u)$ are two clusters, otherwise there is one
$twin-cycle$ on $U$ in $G$, which is a contradiction with
Lemma~\ref{twin-cycle-2-k33}. Hence, in $G$, $U$ is on boundary of
an external face.

\item {$G$ is $2$-connected:} Assume $W=\{w_1,w_2\}$ is an arbitrary cut-set of $G$, and $C_1$ is
a component in graph $G\setminus W$. In graph $G_1=C_1\cup W$, we
can assume there is edge $e(w_1,w_2)$, because otherwise we can
apply contraction(minor action) on $G\setminus \{C_1\cup W\}$ to
contract $W$ to be a vertex, and the proof is similar. Then graph
$G_1$ has $U_1= \{U\cap C_1\}\cup \{W\}$ as an extension of $U$ by
Definition~\ref{extending}. Similar as proof of
Lemma~\ref{keeping-formal}, $G_1$ is a formal graph of $U_1$.

\quad By induction $G_1$ is planar and $U_1$ is on boundary of an external face. If
in graph $G\setminus W$, there is another component $C_2$ and set
$G_2=C_2\cup W$, then $G_1,G_2$ can be combined together by merging
$W$, and after combination, it is planar and $U_1\cup U_2$ is on boundary of an external face.
If in graph $G\setminus W$, there are only components
$C_1,C_2$, then this subcase has been proved. 

\quad If in graph $G\setminus W$ besides $C_1,C_2$, there is another
component $C_3$, then in $G$ there is one $twin-cycle$ on $U$, which
is a contradiction with Lemma~\ref{twin-cycle-2-k33}. Hence we can
conclude that when $G$ is $2$-connected, the conclusion holds.

\item {$G$ is $1$-connected:} This can be proved similarly as $2$-connected case.
\end{enumerate}

\qed
\end{proof}

Theorem~\ref{U-on-perimeter} has closed connection with
Theorem~\ref{simple-cycle-in-3-connected},
Proposition~\ref{3-to-3-connected-structure}
and~\ref{3-to-2-connected-structure}. Easy to see if condition of
$K_{3,3}$ minor added, from results of
Theorem~\ref{simple-cycle-in-3-connected},
Proposition~\ref{3-to-3-connected-structure}
and~\ref{3-to-2-connected-structure}, we can prove
Theorem~\ref{U-on-perimeter} easily.

From Lemma~\ref{keeping-formal} and Theorem~\ref{U-on-perimeter}, by
intuition in a planar graph, the external face can be peeled
iteratively. A reverse processing can be used to generated a planar
graph. From these ideas, we can give a new geometrical definition of
planar graph. With such a definition, simple algorithms can be
designed to test planarity of graph and compute an orthogonal planar
embedding of planar graph. Because of limit space, all of these will
be discussed in another paper.


\begin{lemma}\label{parent-graph}
If $G(V,E)$ is a 4-connected chromatic number $5$ and $K_5$ minor free
graph, then graph $G$ is planar.
\end{lemma}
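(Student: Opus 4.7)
The plan is to use the apparatus of \emph{formal graphs} (Definition~\ref{formal-graph}) together with Theorem~\ref{G-and-G'} and Theorem~\ref{U-on-perimeter} to reduce the planarity of $G$ to the planarity of a subgraph where the external face carries the relevant neighborhood. Using the paper's standing convention, I may assume $G$ cannot be reduced by minor actions to a smaller chromatic number $\geq 5$ graph. So by Theorem~\ref{G-and-G'} there is a vertex $v\in V$ such that $G'=G\setminus v$ is $3$-connected and $N(v)$ is a kernel vertex set of $G'$. Set $U:=N(v)$; note $|U|\geq 4$ since $G$ is $4$-connected.

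Next I would verify that $G'$ is a formal graph of $U$ in the sense of Definition~\ref{formal-graph}. Adding back a vertex $w$ to $G'$ with $N(w)=U$ simply reconstructs $G$ (up to relabeling $w\mapsto v$), so the resulting graph inherits $G$'s properties. By hypothesis $G$ has no $K_5$ minor, which handles one forbidden minor. For the other, observe that if $G$ contained a $K_{3,3}$ minor, then since $G$ is $4$-connected, Theorem~\ref{k33-to-k5} would force a $K_5$ minor in $G$, contradicting our assumption; hence $G$ is also $K_{3,3}$ minor free. Together these two facts show $G'$ is formal for $U$.

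Now I would apply Theorem~\ref{U-on-perimeter} to the connected formal graph $G'$ of $U$: this immediately yields that $G'$ is planar and that $U=N(v)$ lies on the boundary of an external face of $G'$. Because all of $N(v)$ sits on a single face boundary, one can place $v$ inside that face and draw the edges from $v$ to $N(v)$ without crossings, producing a planar embedding of $G$.

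The main obstacle is not any single step in the chain above but rather making sure the hypotheses line up cleanly: in particular, one must be careful that the non-reducibility convention applies in the $4$-connected setting (so that Theorem~\ref{G-and-G'} is available), that the vertex $v$ it supplies actually satisfies $|N(v)|\geq 4$ so that Theorem~\ref{U-on-perimeter} is being invoked in a non-degenerate regime, and that the $K_{3,3}$-minor freeness truly transfers from $G$ to ``$G'$ together with the re-added vertex,'' which is the content of Proposition~\ref{no-k33} in the child-graph language. Once those bookkeeping points are nailed down, the planarity of $G$ follows by glueing $v$ into the external face guaranteed by Theorem~\ref{U-on-perimeter}.
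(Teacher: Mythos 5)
Your argument is essentially the paper's own proof: form $G'=G\setminus v$, use $K_5$-minor freeness plus Theorem~\ref{k33-to-k5} (i.e.\ Proposition~\ref{no-k33}) to conclude $G'$ is a formal graph of $N(v)$, apply Theorem~\ref{U-on-perimeter} to get planarity of $G'$ with $N(v)$ on an external face, and re-insert $v$ into that face. The detour through the non-reducibility convention and Theorem~\ref{G-and-G'} to secure $3$-connectedness of $G'$ and kernel-ness of $N(v)$ is harmless but unnecessary, since Theorem~\ref{U-on-perimeter} only needs $G'$ connected and formal for $U$.
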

\begin{proof}
There is graph $G'=G\setminus v$ where $v\in V$. By
Theorem~\ref{k33-to-k5} and Proposition~\ref{no-k33}, $G'$ is a
formal graph of $N(v)$. By Theorem~\ref{U-on-perimeter}, $G'$ is
planar, and $N(v)$ is on boundary of an external face of $G'$. Hence
$G$ is planar.

\qed
\end{proof}


\begin{lemma}\label{4cc}
If each planar graph is $4$-colorable, then
Conjecture~\ref{Ex-conjecture} is correct.
\end{lemma}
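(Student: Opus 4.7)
The plan is to prove the contrapositive route via Theorem~\ref{equivalence-theorem}: since that theorem tells us Conjecture~\ref{Ex-conjecture} is equivalent to the statement ``every chromatic number $5$ graph contains a $K_5$ minor,'' it suffices to show that the Four Color Theorem forces the existence of a $K_5$ minor in every chromatic number $5$ graph. So I would assume for contradiction that there exists a chromatic number $5$, $K_5$ minor free graph $G$.

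Next, I would pass to a minimal such counterexample. By Corollary~\ref{removing-one-vertex-to-be-3-connected} (together with the remark following Theorem~\ref{G-prime-is-3-connected} that we may always work on a graph which cannot be reduced by minor actions to a smaller graph of chromatic number $\geq 5$), any such minimal $G$ must be at least $4$-connected; otherwise $G$ could be reduced, contradicting minimality (the reduced graph is smaller, still has chromatic number $\geq 5$, and remains $K_5$ minor free since minor-closed).

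The key step is then to invoke Lemma~\ref{parent-graph}: a $4$-connected chromatic number $5$ and $K_5$ minor free graph is planar. Applied to our minimal $G$, this yields that $G$ is planar. But by hypothesis every planar graph is $4$-colorable, so $G$ has chromatic number $\leq 4$, contradicting chromatic number $5$. Therefore no such counterexample exists, every chromatic number $5$ graph has a $K_5$ minor, and by Theorem~\ref{equivalence-theorem} Conjecture~\ref{Ex-conjecture} holds.

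The main obstacle is not in the logical skeleton, which chains three prior results cleanly, but in confirming that the reduction process preserves the hypotheses needed in each step: namely that a minor of a $K_5$ minor free chromatic number $5$ graph produced by the reductions of Corollary~\ref{removing-one-vertex-to-be-3-connected} remains $K_5$ minor free, and that Lemma~\ref{parent-graph} applies at the final stage. Once one observes that minor operations cannot create a $K_5$ minor that was not already present, the argument reduces to a short syllogism combining Corollary~\ref{removing-one-vertex-to-be-3-connected}, Lemma~\ref{parent-graph}, the Four Color Theorem (as an assumption), and Theorem~\ref{equivalence-theorem}.
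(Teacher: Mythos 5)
Your proof follows essentially the same route as the paper: invoke Theorem~\ref{equivalence-theorem} to reduce to showing every chromatic number $5$ graph has a $K_5$ minor, use $4$-connectivity of an irreducible counterexample (Corollary~\ref{removing-one-vertex-to-be-3-connected}/Theorem~\ref{G-and-G'}), apply Lemma~\ref{parent-graph} to conclude planarity, and derive a contradiction from the Four Color hypothesis. You are somewhat more explicit than the paper about why the minimal counterexample survives the reduction step ($K_5$-minor-freeness being preserved under taking minors), but the argument is the same.
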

\begin{proof}
We prove this Lemma by contradiction. Suppose
Conjecture~\ref{Ex-conjecture} is not correct, then by
Theorem~\ref{equivalence-theorem}, there is a chromatic number $5$
graph $G$ in which there is no $K_5$ minor. By
Theorem~\ref{G-and-G'}, $G$ is $4$-connected. By
Lemma~\ref{parent-graph}, $G$ is planar. By assumption, $G$ can be
colored with $4$ colors which is a contradiction with $G$ has
chromatic number $5$.

\qed
\end{proof}

\begin{theorem}[Wagner's Equivalence Theorem]\label{wagner-theorem}
Every chromatic number $5$ graph has a $K_5$ minor if and only if
every planar graph can be colored with $4$ colors.
\end{theorem}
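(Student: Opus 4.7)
The plan is to handle the two directions of the biconditional separately, pulling most of the real content from the lemmas already assembled in the excerpt.

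For the ``if'' direction, suppose every planar graph admits a proper $4$-coloring. Then by Lemma~\ref{4cc}, Conjecture~\ref{Ex-conjecture} holds. Theorem~\ref{equivalence-theorem} (the equivalence of $K_5$-minor existence in chromatic number $5$ graphs with Conjecture~\ref{Ex-conjecture}) then immediately yields that every chromatic number $5$ graph contains a $K_5$ minor. So this direction is essentially one line once Lemma~\ref{4cc} and Theorem~\ref{equivalence-theorem} are in place.

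For the ``only if'' direction, suppose for contradiction that every chromatic number $5$ graph has a $K_5$ minor but there exists a planar graph $H$ that cannot be properly $4$-colored. Pass to a vertex-minimal such $H$; then $H$ is planar with chromatic number exactly $5$ (any smaller chromatic number subgraph is still planar, and decreasing chromatic number below $5$ would contradict the choice of $H$). By hypothesis, $H$ contains a $K_5$ minor. This contradicts the classical fact that planar graphs are $K_5$-minor-free, yielding the result.

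The only non-trivial ingredient across both directions is the classical statement that planar graphs contain no $K_5$ minor. I would invoke it directly (it follows from Euler's formula $|V|-|E|+|F|=2$ together with the face/edge inequality $2|E|\geq 3|F|$, applied to a hypothetical planar drawing of any subdivision-ancestor/minor-parent, so no appeal to Kuratowski is needed). Everything else has been prepared in the preceding lemmas, so the ``hard'' part is conceptual rather than technical: recognising that Lemma~\ref{4cc}, Theorem~\ref{equivalence-theorem}, and planar $K_5$-minor-freeness together close both implications. I would therefore keep the write-up short, just two short paragraphs giving each direction and citing the appropriate assembled result.
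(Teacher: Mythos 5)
Your proposal is correct and follows essentially the same route as the paper: the ``if'' direction is verbatim the paper's argument (Lemma~\ref{4cc} followed by Theorem~\ref{equivalence-theorem}), and the ``only if'' direction rests, as in the paper, on the classical fact that planar graphs are $K_5$-minor-free. Your minimal-counterexample wrapper in the ``only if'' direction is a harmless (arguably slightly more careful) variant of the paper's direct contrapositive, ensuring the hypothetical counterexample has chromatic number exactly~$5$ before the hypothesis is applied.
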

\begin{proof}
At first we prove from left to right. If a graph has a $K_5$ minor,
trivially it is not planar. Suppose graph $G(V,E)$ is a planar
graph, so $G$ has no $K_5$ minor, by left side, $G$ has chromatic
number $<5$. Hence $G$ can be colored with $4$ colors.

Then we prove from right to left. If every planar graph is
$4$-colorable, by Lemma~\ref{4cc}, Conjecture~\ref{Ex-conjecture} is
correct. By Theorem~\ref{equivalence-theorem}, every
chromatic number $5$ graph has a $K_5$ minor.

\qed
\end{proof}

\section{Conclusion and Next Step of Work}

In this paper, we prove that in a $3$-connected chromatic number
$4$ graph $G(V,E)$ given a vertex set $U\subseteq V$, if there is no $K_4$ minor
on $U$ then $U$ is included by a simple cycle of $G$. And when
$G$ is a planar graph, the simple cycle is boundary of an external
face. By applying such results, we can prove Wagner's Equivalence
Theorem without using Kuratowski's Theorem which is different from
existing proofs. That means our proof does not rely on current existing
properties of planar graph.

In fact starting from this paper a new geometrical definition of
planar graph can be deduced, so is an algorithm for testing
planarity and computing an orthogonal planar embedding of a planar
graph whose complexity is the same as current algorithms but much
simpler. In next step we will prove the new definition is equivalent
with Kuratowski's Theorem.



\title{Coloring on Outerplanar Graph and Planar Graph}

\author{Weiya Yue\inst{1}, Weiwei Cao\inst{2}}
\institute{$^1$ Google Inc., Mountain View, CA 94043\\
$^2$ Graduate School of Chinese Academy of Sciences, Beijing China 100049\\
\email{weiyayue@hotmail.com}}

\maketitle

\vspace{-5mm}
\section{Abstract}
In this paper, at first we discuss the relation between outerplanar graph and
book thickness 1 graph, mainly based on the perimeter trace. By doing
research on book thickness 1 graph with the help of perimeter trace,
we can prove there is a $3$-color assignment of outerplanar graph with
special properties by induction. More important is, the proof does not depend on
the minimum degree of outerplanar graph and can be generalized. In order to show this, we discuss a
way to extend our results to color a planar graph with $4$ colors.
Beyond that, our conclusion shows that a outerplanar graph and planar
graph can not only be colored with $3$ and $4$ colors respectivly, but also satisfying extra constraints.


\section{Introduction}
An outerplanar graph~\cite{CH1967} is an undirected graph that can
be drawn without edge crossing and whose vertices are on boundary of
the drawing's unbounded or outer face. A graph is outerplanar if
it is turned into a planar graph after adding a new vertex which connect all vertices in the
graph. One method to recognize an outerplanar graph
is to use its criterion: a graph is
outerplanar if and only if it does not contain $K_4$ or $K_{2,3}$
minor~\cite{DR2000}. The
decomposition method to test if every biconnected component is
outerplanar~\cite{SM1979} can also be used. It has been proven that
the book thickness of a graph is $\leq 1$ if and only if it is a outerplanar graph~\cite{BK1979}.
For convenience, below we will call a book thickness 1 graph as book-1 graph.

All the knowledge in literatures on coloring an outerplanar graph is
that it can be colored with $3$ colors since its
minimum vertex degree is no bigger than $2$~\cite{PS1986}. A simple iterative
algorithm can output a $3$-color assignment which removes a degree $\leq 2$ vertex,
then colors the remaining graph, at last restores the removed vertex with the
unused color different from colors assigned to its neighbors.

The concise outlook of book-1 graph makes some properties of
outerplanar graph can be easier found. For example, every book-1
graph, i.e. every outerplanar graph, has a degree $\leq 2$ vertex.
Book-1 graph pictures a better geometric view to make observation
for outerplanar graph, which motivates our work on new results of
coloring of outerplanar graph.

Our paper is organized as follows. In Section~\ref{sec-book-1} we
prepare some definitions and prove the properties of perimeter trace
in outerplanar graph and in book-1 graph.
Section~\ref{sec-regularity} presents more properties of color
assignment of outerplanar graph which does not depend on minimum
degree of outerplanar graph. Then we discusses a way to
generalize our results of $3$-coloring of outerplanar graph to prove
four color problem in planar graph. Section~\ref{sec-conclusion}
concludes.

\section{Perimeter Trace}\label{sec-book-1}

Corresponding to a drawing of a book-1 graph, all vertices are on
the spine. The two vertices at the ends of spine are called
end-vertices. In book-1 graph, vertices expose to the side of the
line where edges are drawn are called outer-vertices; Other vertices
are called inner-vertices. Trivially, end-vertices are always
out-vertices.

Figure~\ref{fig-1}.$a$ displays a book-1 graph. All vertices are on
the spine, edges are at right side of the line. $\{v_1,v_6\}$ are
end-vertices, also the only two outer-vertices.
Figure~\ref{fig-1}.$b$ displays another book-1 graph, in which
$\{v_1,v_6\}$ are end-vertices, $\{v_1,v_3,v_6\}$ outer-vertices,
and $\{v_2,v_4,v_5\}$ inner-vertices.

\begin{figure}
 \setlength{\unitlength}{0.5mm}
\begin{pspicture}(-2,-0.5)(10,3.0)
   \pscurve[linewidth=1pt,%
     showpoints=true](1.5,0.5)(1.5,1)(1.5,1.5)(1.5,2)(1.5,2.5)(1.5,3)
   \pscurve[linewidth=1pt,%
     showpoints=false](1.5,3)(2.1,2.3)(2.1,1.2)(1.5,0.5)
   \pscurve[linewidth=1pt,%
     showpoints=false](1.5,0.5)(1.9,1.25)(1.5,2)
   \pscurve[linewidth=1pt,%
     showpoints=false](1.5,2)(1.7,2.5)(1.5,3)
   \pscurve[linewidth=1pt,%
     showpoints=false](1.5,0.5)(1.7,1)(1.5,1.5)
%
%
\rput(1.1,0.5){$v_6$} \rput(1.1,1){$v_5$}\rput(1.1,1.5){$v_4$}
\rput(1.1,2){$v_3$}\rput(1.1,2.5){$v_2$}\rput(1.1,3){$v_1$}
\rput(1.5,-0.5){$(a)$}
   \pscurve[linewidth=1pt,%
     showpoints=true](6.5,0.5)(6.5,1)(6.5,1.5)(6.5,2)(6.5,2.5)(6.5,3)
   \pscurve[linewidth=1pt,%
     showpoints=false](6.5,0.5)(6.9,1.25)(6.5,2)
   \pscurve[linewidth=1pt,%
     showpoints=false](6.5,2)(6.7,2.5)(6.5,3)
   \pscurve[linewidth=1pt,%
     showpoints=false](6.5,0.5)(6.7,1)(6.5,1.5)
%
\rput(6.0,0.5){$v_6$} \rput(6,1){$v_5$}\rput(6,1.5){$v_4$}
\rput(6,2){$v_3$}\rput(6,2.5){$v_2$}\rput(6,3){$v_1$}
\rput(6.5,-0.5){$(b)$}
 \end{pspicture}
\caption{Book Thickness $1$ Graphs}\label{fig-1}
\end{figure}
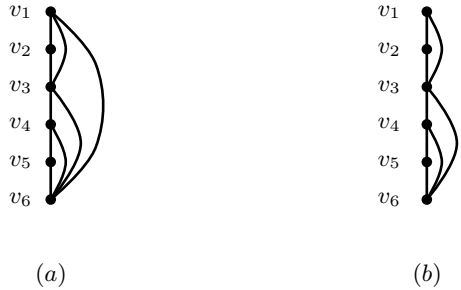

Given a graph $G(V,E)$, a set of vertices $U\subseteq V$ and
$|U|=x>0$, if there is a $K_x$ minor in which any vertex $v\in K_x$
is contracted from $S\subseteq V$ and $S\cap U\neq \emptyset$, then
we say in $G$ the $K_x$ minor is on $U$. If $U'\subseteq V$ and
$U\subseteq U'$, we also say there is a $K_x$ minor on $U'$.

In a $K_{x,y}$ minor, we call the $x$ vertices as upper-vertices,
and the $y$ vertices lower-vertices. A certain set of vertices is
defined as a perimeter trace as below.
\begin{definition}\label{perimeter-trace-of-outplanar}
Given an outerplanar graph $G(V,E)$, its perimeter trace is a set of
vertices $U\subseteq V$ if:
\begin{enumerate}
\item on $U$ there is no $K_3$ minor;
\item no $K_{2,2}$ minor in which all upper-vertices or all lower-vertices belong to $U$.
\end{enumerate}
\end{definition}
It is trivial that all subsets of a perimeter trace $U$ of an
outerplanar graph $G$ are perimeter traces; if we add a vertex $v$
into $G$ and $v$ is only connected with vertices in $U$, then the
new graph is still outerplanar.

In subsection~\ref{subsec-structre-book-1}, we prove some properties
of perimeter trace in outerplanar graph and in book-1 graph.

\subsection{Perimeter Trace in Outerplanar and Book-1 Graph}\label{subsec-structre-book-1}

Note that a cut set containing more than one vertex implies none of
these vertices is a cut vertex. Below we give some preliminary
results.
\begin{lemma}\label{k-2-2-in-outerplanar}
In an outerplanar graph, if there is a $K_{2,2}$ minor on $4$ vertices,
then its upper-vertices or lower-vertices form a cut set.
\end{lemma}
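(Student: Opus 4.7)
The plan is a proof by contradiction using the minor characterization of outerplanar graphs quoted earlier in this section: a graph is outerplanar if and only if it contains neither a $K_4$ nor a $K_{2,3}$ minor. Let $S_a, S_b, S_c, S_d$ be disjoint connected branch sets witnessing the hypothesized $K_{2,2}$ minor, with $a, b$ the upper-vertices and $c, d$ the lower-vertices. Suppose, for contradiction, that neither $\{a,b\}$ nor $\{c,d\}$ is a cut set of $G$. Then $c$ and $d$ lie in a common component of $G \setminus \{a,b\}$, so there is a path $P_{cd}$ from $c$ to $d$ avoiding $\{a,b\}$; symmetrically, there is a path $P_{ab}$ from $a$ to $b$ avoiding $\{c,d\}$. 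The goal is to use these two extra paths to upgrade the $K_{2,2}$ minor to a $K_{2,3}$ minor, forcing a contradiction.

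First I would pin down the cyclic order of $a, b, c, d$ on the outer face of some outerplanar embedding of $G$. The minor requires chord-like connections between the pairs $(S_a,S_c)$, $(S_a,S_d)$, $(S_b,S_c)$, $(S_b,S_d)$; a non-alternating cyclic order such as $a,b,c,d$ would force the $(S_a,S_c)$ and $(S_b,S_d)$ chords to cross, violating planarity. Hence the cyclic order must be the alternating one, say $a,c,b,d$. A Jordan-curve argument then applies: $P_{ab}$ together with the outer-face arc from $b$ through $c$ back to $a$ bounds a simple closed region with $d$ strictly outside, so any path from $c$ to $d$ either passes through an endpoint of $P_{ab}$ or shares an internal vertex with $P_{ab}$. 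Because $P_{cd}$ avoids $\{a,b\}$, it must share some internal vertex $v$ with $P_{ab}$, and $v \notin \{a,b,c,d\}$.

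The vertex $v$ then anchors a $K_{2,3}$ minor with upper part $\{a,b\}$ and lower part $\{c,d,v\}$. Splitting $P_{ab}$ at $v$ yields two subpaths, one from $a$ to $v$ and one from $v$ to $b$; enlarging $S_a$ and $S_b$ by their respective interiors, and taking $S_c, S_d, \{v\}$ as the remaining branch sets, realizes the minor: the four adjacencies on $\{a,b\} \times \{c,d\}$ come for free from the original $K_{2,2}$, while the $a$-$v$ and $b$-$v$ adjacencies come from the two subpaths meeting at $v$. Since $G$ is outerplanar, this $K_{2,3}$ minor is the sought contradiction.

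The main obstacle is the disjointness of these five branch sets: a priori $P_{ab}$ only avoids $\{c,d\}$ rather than all of $S_c \cup S_d$, and $P_{ab}$ and $P_{cd}$ might share more than one vertex. I would dispose of this by first choosing $S_a,S_b,S_c,S_d$, then $P_{ab}$ and $P_{cd}$, so as to jointly minimize total vertex count, and by taking $v$ to be the first intersection of $P_{cd}$ with $P_{ab}$ encountered while traversing $P_{cd}$ from $c$. Any stray excursion of $P_{ab}$ into some $S_c$ or $S_d$ could then be short-circuited using the tree structure of that branch set, contradicting minimality; a symmetric rerouting handles any overlap between $P_{ab}$ and $P_{cd}$ past $v$. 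Once these adjustments are in place, the five branch sets are pairwise disjoint and the $K_{2,3}$ minor is genuine, completing the contradiction.
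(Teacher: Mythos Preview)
Your plan is correct, but the paper's own proof is considerably shorter and purely combinatorial, with no appeal to an outerplanar embedding or the Jordan curve theorem. The paper assumes only that $\{u_1,u_2\}$ is not a cut set, obtains a single path $P_{3,4}$ from $u_3$ to $u_4$ avoiding $\{u_1,u_2\}$, and then splits into two cases. If $P_{3,4}$ has an internal vertex $u\notin\{u_1,u_2,u_3,u_4\}$, the $K_{2,3}$ is built with bipartition $\{u_3,u_4\}$ versus $\{u_1,u_2,u\}$: the extra vertex $u$ is placed on the \emph{upper} side, so the two $P_{3,4}$-halves are absorbed into $S_3$ and $S_4$, and the second non-cut-set hypothesis is never used. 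Only when $P_{3,4}$ is the single edge $e(u_3,u_4)$ does the paper invoke the assumption that $\{u_3,u_4\}$ is not a cut set, producing $P_{1,2}$ and a $K_4$ minor instead. Your route, by contrast, always uses both paths and relies on the cyclic order $a,c,b,d$ plus a Jordan-curve crossing argument to locate the shared vertex $v$; this is heavier machinery than the paper needs. On the other hand, you are explicit about the branch-set disjointness issue and propose a minimality argument to resolve it, whereas the paper simply asserts the $K_{2,3}$ (or $K_4$) minor and leaves the analogous cleanup implicit.
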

\begin{proof}
Assume there is a $K_{2,2}$ minor on $U=\{u_1,u_2,u_3,u_4\}$, and no
harm to say $\{u_1,u_2\}$ are upper-vertices and $\{u_3,u_4\}$
lower-vertices. If $\{u_1,u_2\}$ is not a cut set, then there is a
path $P_{3,4}$ between $u_3$ and $u_4$ without passing through
$\{u_1,u_2\}$. So $P_{3,4}$ has two cases:
\begin{enumerate}
\item {$P_{3,4}=e(u_3,u_4)$:} if $\{u_3,u_4\}$ is not a cut set, then there
is a path $P_{1,2}$ between $u_1$ and $u_2$ without passing through
$\{u_3,u_4\}$. So on $U$ there is a $K_4$ minor which contradicts
with criterion of outerplanar graph.
\item {$P_{3,4}\not =e(u_3,u_4)$:} assume there is $u\not\in U$ on path $P_{3,4}$, then on
$U\cup \{u\}$ there is a $K_{2,3}$ minor which contradicts with
criterion of outerplanar graph.
\end{enumerate}
\qed
\end{proof}

Given a connected outerplanar graph $G(V,E)$, we can always find two
vertices $u_1,u_2$ between which there is a path
$P_{1,2}=e(u_1,u_2)$ and $\{u_1,u_2\}$ is not a cut set or
$P_{1,2}\neq e(u_1,u_2)$ and every vertex in
$P_{1,2}\setminus\{u_1,u_2\}$ is a cut vertex separating $u_1$ and
$u_2$. For example, two neighbor vertices on a connected outerplanar
graph's outer face can be chosen as $\{u_1,u_2\}$. We name vertices
in $P_{1,2}\setminus \{u_1,u_2\}$ as $\emph{bridge vertices}$ of
$\{u_1,u_2\}$. Below we use $U=\{u_1,w_1,...,w_x,u_2\}$ to represent
the vertices orderly appearing on a path in an outerplanar graph.

\begin{proposition}\label{structure-of-outplanr-perimeter-trace}
In a connected outerplanar graph $G(V,E)$, a set of vertices
$U=\{u_1,w_1,...,w_x,u_2\}\subseteq V$ is a perimeter trace if and
only if there are $W\subseteq V$ as $\{u_1,u_2\}$'s bridge vertices
and $\{w_1,...,w_x\}\subseteq W$.
\end{proposition}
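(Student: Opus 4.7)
The plan is to split the proof into the two implications, with the structural hypothesis of an outerplanar graph handled via the forbidden–minor criterion ($K_4$ and $K_{2,3}$ free) plus Lemma~\ref{k-2-2-in-outerplanar}. I would fix a path realizing $\{w_1,\dots,w_x\}$ in the natural linear order and argue in terms of cut–vertex separations, treating bridge vertices as the intermediate vertices of a path $P_{1,2}$ every inner point of which separates $u_1$ from $u_2$.

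For the sufficiency direction ($\Leftarrow$), I assume $\{w_1,\dots,w_x\}\subseteq W$, where $W$ is the set of bridge vertices of $\{u_1,u_2\}$ along some path $P_{1,2}$. Because every $w_i$ is a cut vertex separating $u_1$ from $u_2$, the graph decomposes across each $w_i$ into an ``$u_1$–side'' and an ``$u_2$–side.'' I would first check that no three vertices of $U$ can lie on a common cycle, which immediately kills any $K_3$ minor on $U$: if $a,b,c\in U$ admitted a $K_3$ minor, then three internally disjoint $a$–$b$, $b$–$c$, $a$–$c$ paths would exist, but the middle vertex of $U$ would have to be crossed in and out on both sides of its cut, a contradiction. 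For the second clause I would invoke Lemma~\ref{k-2-2-in-outerplanar}: in an outerplanar graph any $K_{2,2}$ minor on four vertices has an upper- or lower-pair that is a cut set. Since none of the $w_i$ together with another vertex of $U$ can play that role (the cut behavior of each $w_i$ separates $U$ linearly), no $K_{2,2}$ minor can have its upper or lower pair contained in $U$.

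For the necessity direction ($\Rightarrow$), I would proceed by induction on $x$. The base case $x=0$ reduces to $U=\{u_1,u_2\}$, which is trivially a perimeter trace, and the required $W=\emptyset$. For the inductive step, take some path $P$ from $u_1$ to $u_2$ in $G$. If some $w_i$ does not appear on $P$, I show that $w_i$ must still lie on an internally disjoint alternative from $u_1$ to $u_2$; combining that with $P$ produces a $K_{2,2}$ minor whose lower pair is $\{u_1,u_2\}\subseteq U$, contradicting that $U$ is a perimeter trace. Hence, after possibly rerouting, all $w_i$ lie on $P$ in some linear order. Then I must verify each $w_i$ is a cut vertex separating $u_1$ and $u_2$. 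If some $w_i$ were not, there would be a $u_1$–$u_2$ path avoiding $w_i$, which together with the subpath of $P$ through $w_i$ yields either a $K_3$ minor on $\{u_1,w_i,u_2\}\subseteq U$ or a $K_{2,2}$ minor whose upper or lower pair sits in $U$, both forbidden.

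The main obstacle is the rerouting argument in the inductive step: when a vertex of $U$ does not sit on a chosen $u_1$–$u_2$ path, I need to use outerplanarity (specifically the absence of $K_{2,3}$) together with the perimeter-trace conditions to force that vertex onto a path along which it is a cut vertex. The cleanest execution is to pick $P$ minimizing $|U\setminus P|$ and then argue that if this quantity is positive, one can extract a $K_{2,3}$ or a $K_{2,2}$-on-$U$ minor, contradicting either outerplanarity or the perimeter-trace hypothesis. Once the linear structure on $P$ is established, proving the cut-vertex property of each $w_i$ is routine.
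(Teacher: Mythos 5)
Your proposal follows essentially the same route as the paper: both directions rest on the same two facts, namely that a $w_i$ failing to be a cut vertex separating $u_1,u_2$ yields a $K_3$ minor on $U$ (and a $\geq 2$-cut separating $u_1,u_2$ yields a $K_{2,2}$ minor with $\{u_1,u_2\}$ as one side), while sufficiency is discharged via Lemma~\ref{k-2-2-in-outerplanar} exactly as the paper does. The induction-with-rerouting wrapper you add to the necessity direction is extra packaging around the same argument the paper gives directly, so there is nothing substantively different to report.
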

\begin{proof}
If there is no $W\subseteq V$ as $\{u_1,u_2\}$'s bridge vertices,
then there is a cut set $C$ with $|C|\geq 2$ separating $u_1$ and
$u_2$. Then on $C\cup \{u_1,u_2\}$ there is a $K_{2,2}$ minor whose
upper-vertices(lower-vertices) are $\{u_1,u_2\}$, so $U$ is not a
perimeter trace. If $W$ exists, 
and there is $w\in \{w_1,...,w_x\}\setminus W$, and $w$ is not a
cut-vertex separating $u_1$ and $u_2$, then on $\{u_1,u_2,w\}$, i.e. on $U$ there is a $K_3$
minor, which means $U$ is not a perimeter trace.

If $W$ are $\{u_1,u_2\}$'s bridge vertices,
there is no $K_3$ minor on $W\cup \{u_1,u_2\}$; if no two vertices
in $W\cup \{u_1,u_2\}$ form a cut set, then by
Lemma~\ref{k-2-2-in-outerplanar} there is no $K_{2,2}$ minor whose
upper-vertices(lower-vertices) are only contained in $W\cup
\{u_1,u_2\}$. Hence, $W\cup \{u_1,u_2\}$ is a perimeter trace. So
$U\subseteq W\cup \{u_1,u_2\}$ is a perimeter trace.

\qed
\end{proof}

\begin{corollary}\label{book-1-has-perimeter-trace}
In a book-1 graph, its outer-vertices form a perimeter trace.
\end{corollary}
\begin{proof}
Since the union of perimeter traces for each component of a
disconnected outerplanar graph is a perimeter trace of the whole
graph, then it concludes by applying
Proposition~\ref{structure-of-outplanr-perimeter-trace} on the
simple structure of book-1 graph.

\qed
\end{proof}

Also by Proposition~\ref{structure-of-outplanr-perimeter-trace}, if
an outerplanar graph is $2$-connected, a perimeter trace can be
formed by two arbitrary neighbor vertices on the hamiltonian cycle.
In this paper, a $n$-connected graph means that the graph remains
connected after removing arbitrary $(n-1)$ vertices.

\begin{theorem}\label{keep-structure-after-reducing}
Given an outerplanar graph $G(V,E)$, if $U\subseteq V$ is a
perimeter trace, and $u\in U$, then $U'=\{\{U\setminus u\}\cup
N(u)\}$ in graph $G'=G\setminus u$ is a perimeter trace.
\end{theorem}
\begin{proof}
There are two cases depending on $G$ is connected or disconnected.
When $G$ is disconnected, note that the union of perimeter traces of
components of $G$ is $G$'s perimeter trace, which means we can do
the proof on the component containing $u$. Hence below we only prove
the case when $G$ is connected. Assume $U=\{u_1,u_2,...,u_x\}$
orderly appearing on a path in $G$.

When $G$ is connected, there are two cases: $G'$ is connected or
disconnected. By
Proposition~\ref{structure-of-outplanr-perimeter-trace}, if
$G'=G\setminus u$ is connected, then $u\in \{u_1,u_2\}$, we can assume $u=u_1$ and get two
subcases:
\begin{enumerate}
\item $deg(u)=1$, trivially $U'$ is a perimeter trace of $G'$.
\item $deg(u)>1$, assume $\{q_1,q_2,...,q_y\} \subseteq N(u)$ orderly appearing on a path $P_{1,2}$ from $q_1$ to $u_2$.
Assume can not find a path $P_{1,2}$ between $q_1$ and $u_2$,
$P_{1,2}\setminus \{q_1,u_2\}$ are all bridge vertices of $\{q_1,u_2\}$,
then there is a cut set $C$ with $|C|\geq 2$ separating $q_1$ and
$u_2$. Because $U$ is a perimeter trace on $G$, by
Proposition~\ref{structure-of-outplanr-perimeter-trace}, there is a
path $P$ between $u_1,u_2$, and $P'=P\setminus \{u_1,u_2\}$ are
bridge vertices of $\{u_1,u_2\}$. Then there is $C\cap P'=\emptyset$. So
on $C\cup \{u_1,q_1,u_2\}$ there is a $K_{2,3}$ minor which is a
contradiction. Hence on path $P_{1,2}$, $P_{1,2}\setminus \{q_1,u_2\}$ are all bridge vertices.
Also similarly to show that, $N(v) \subseteq P_{1,2}$.
By
Proposition~\ref{structure-of-outplanr-perimeter-trace}, $U'$ is a
perimeter trace of $G'$.
\end{enumerate}

If $G'=G\setminus u$ is disconnected, then $u$ is a cut vertex of
$G$. Suppose $V=\{v_1,v_2,...u,...,v_n\}$ orderly appear on the
spine. This case can be proved similarly by doing induction
on two subgraphs of $G$: $G_1=G\cap \{v_1,...,u\}$ and $G_2=G\cap
\{u,...,v_n\}$. Only need to notice that after deleting $u$ from
$G_1,G_2$ respectively, the union of two perimeter traces of
$G_1,G_2$ is a perimeter trace of $G'$.
\qed
\end{proof}

In a book-1 graph, by Corollary~\ref{book-1-has-perimeter-trace},
its outer-vertices form a perimeter trace. And the conclusion in
Theorem~\ref{keep-structure-after-reducing} can be applied on such a
perimeter trace easily because of simple structure. But
Theorem~\ref{keep-structure-after-reducing} is more generalizable.
It states every perimeter trace satisfies such a conclusion. And
in~\cite{WeiyaNote2}, similar conclusion in planar graph can be
proved as Theorem~\ref{keep-structure-after-reducing}.


\begin{theorem}\label{book-1}
Given an outerplanar graph $G(V,E)$ and $U=\{u_1,w_2,...,w_x,u_2\}$
a perimeter trace in which $W=\{w_2,...,w_x\}$ are bridge vertices of $\{u_1,u_2\}$,
$G$ can be transfer to a book-1 graph in a way
such that $u_1$ and $u_2$ are end-vertices, $U$ are outer-vertices,
and the order of $U$ on the spine is kept the same as they
are on the outer face of $G$.
\end{theorem}
\begin{proof}
We make induction on $|V|$. If $|V|\leq 2$, the conclusion holds
trivially. When $|V|=n$, set $G'=G\setminus u_1$. By
Theorem~\ref{keep-structure-after-reducing}, $Ex(U)=\{U\setminus
u_1\}\cup N(u_1)$ are contained in a perimeter trace of $G'$. Assume
$U'=\{u_1',...,u_y',w_2,...,w_x,u_2\}$ is a perimeter trace of $G'$
and $Ex(U)\subseteq U'$, then $N(u_1)\subseteq
\{u_1',...,u_y',w_2\}$.

By induction, $G'$ can be transferred to a book-1 graph $S'$ in
which $U'$ are outer-vertices, $\{u_1',u_2\}$ are the two
end-vertices, and the order of $U'$ is kept as that on outer face of
$G'$. By definition of book-1 graph, we can get a graph $S$ after
adding $u_1$ into $S'$ by putting $u_1$ at $u_1'$ end. Because
$N(u_1)\subseteq \{u_1',...,u_y',w_2\}$ and $\{u_1',...,u_y',w_2\}$
are outer-vertices of $S'$ and edges between $u_1$ and $N(u_1)$ have
no crossing with edges of $S'$, $S$ is a book-1 graph. Moreover,
such edges should not cover $w_2$, hence $U$ are the outer-vertices with $U$'s order as in
$G$'s outer face is kept.
\qed
\end{proof}

Theorem~\ref{book-1} shows that for every perimeter trace of an
ourerplanar graph, there is a book-1 graph whose outer-vertices
include the perimeter trace.

In next section, we will show a
proof of $3$-coloring of
outerplanar graph. The most important reason to do so is that the
new proof does not need using minimum degree property which makes it
more generalizable to prove four color theorem of planar graph, so is the Hadwiger Conjecture.

\section{Generalizable Coloring of book-1 Graph}\label{sec-regularity}

In this Section, we prove a book-1 graph, i.e. an outerplanar graph,
has a $3$-color assignment satisfying certain coloring constraints.
More important, the proof does not depend on minimum degree property
of outerplanar graph and we find it can be generalized the
results to coloring a planar graph. At first we define $series$ and
$cluster$ introduced in~\cite{WeiyaNote2}. A sequence of vertices
appearing along a simple path in one direction form a $seires$.
After deleting vertices in a $series$ of a path, the left $series$
is still called a $series$ of the path. A continuous part of a
$series$ is called a $cluster$.

If we decompose a $series$ $S$ into a sequence of clusters
$\Upsilon=\{\Upsilon_1,\Upsilon_2,...,\Upsilon_x\}$ by order and
$\bigcup_{i=1}^{x} \Upsilon_i = S$, $\Upsilon$ is called clusters of
the series $S$, in which the end of $\Upsilon_i(1\leq i<x)$ only
overlap on $\leq 1$ vertex with the beginning of $\Upsilon_{i+1}$.
So a vertex can belong to more than one cluster. Definitions
of $series$ and $cluster$ can be generalized on a set of vertices
$U=\{u_1,u_2,...,u_x\}$ if $u_i,u_{i+1}$ are connected and there is
a path $P_i$ between $u_i,u_{i+1}$ satisfying $P_i\cap
U=\{u_i,u_{i+1}\}$ for $1\leq i<x$. If $u_i,u_{i+1}(1\leq i<x)$ are
disconnected we assume an edge $e(u_i,u_{i+1})$ which is certainly a
path $P_i$.

Assume vertices of a book-1 graph $G$ orderly appear on the spine as $V=\{v_1,v_2,...,v_n\}$ and clusters on ordered set of $G$'s
outer-vertices
are $\Upsilon=\{\Upsilon_1,\Upsilon_2,...,\Upsilon_x\}$.
Name $\Upsilon_i$ and $ \Upsilon_{i+1}(1\leq i< q)$ two neighbor
clusters.

To a $cluster$ $\Upsilon_i$, we add a vertex $\gamma_i$ with
$N(\gamma_i)=\Upsilon_i$, and name $\gamma$ as $\Upsilon_i$'s
cluster-vertex. In Figure~\ref{fig-2}.$a$, $\{\Upsilon_1,...,\Upsilon_6\}$ are clusters defined on vertex $\{v\}$ whose cluster-vertices are
corresponding to $\{\gamma_1,...,\gamma_6\}$. In Figure~\ref{fig-2}.$b$, $\{v_1,v_3,v_6\}$ is a series of
outer-vertices which is decomposed into clusters $\{\Upsilon_1,...,\Upsilon_8\}$. Specifically,
$\Upsilon_1=\Upsilon_2=\{v_1\}$, $\Upsilon_3=\{v_1,v_3\}$, $\Upsilon_4=\{v_3\}$, $\Upsilon_5=\{v_3,v_6\}$,
$\Upsilon_6=\Upsilon_7=\Upsilon_8=\{v_6\}$. Here we give the definition:

\begin{definition}\label{color-collection}
Given $CL=\{1,2,...,n\}$ colors, a color collection $cn$ is a real subset of $CL$ and $|cn| = n-1$. A subset $cc \subset cl$
whose cardinality is $< n-1$ can be expanded to be a set of color collections by adding extra colors. Two set of color collections
$cc_1, cc_2$, we say $cc_1$ and $cc_2$ are consistent if $cc_1\cap cc_2 \neq \emptyset$; $cc_1$ and $cc_2$ are inconsistent if
$cc_1\neq cc_2$.
\end{definition}

Without confusion, we do not distinguish a color subset and the set of color collections expanded from it.

Below assume colors $CL=\{1,2,3\}$ are used, and a color collection is
$\{1,2\}$,$\{1,3\}$, or $\{2,3\}$. Color $\{1\}$
is equivalent with collections $\{\{1,2\},\{1,3\}\}$, similarly
color $\{2\} = \{\{1,2\},\{2,3\}\}$ and color $\{3\}=\{\{1,3\},\{2,3\}\}$. When only $3$ colors are used, a $cluster$ $\Upsilon_i$ is colored with the collection $cn$, if and only if its
cluster-vertex $\gamma_i$ is colored with $CL\setminus cn$. For
example, $color(\Upsilon_i)=\{1,2\}$ is equivalent with
$color(\gamma_i)=\{3\}$. Below $cn(\Upsilon_i)$ represents the
collection of colors used on $\Upsilon_i$.
To give some examples about consistency and inconsistency, $\{\{1,2\}\}$ and $\{\{1,2\},\{1,3\}\}$ are consistent and also inconsistent.
So is $\{\{1,2\}\}$ and $\{1\}$. $\{1,2\}$ and $\{1,3\}$ are inconsistent, and $\{\{1,2\},\{1,3\}\}$
and $\{\{2,3\}\}$ are inconsistent, so is $\{1\}$ and $\{\{2,3\}\}$.

\begin{definition}\label{collection-constraints}
In a book-1 graph $G(V,E)$, $U$ are outer vertices, rules of
collection-constraints of coloring on $U$ are defined following:
\begin{enumerate}
\item a $cluster$ is colored with colors from a collection;
\item two neighbor clusters have the same or different collections.
\end{enumerate}
\end{definition}

\begin{definition}\label{extend-original-graph}
In a book-1 graph $G$, collection-constraints $ct$ are defined on
its outer-vertices, define constraint-graph $G_{ct}$ by extending
$G$ as following: 1)every $cluster$ has a cluster-vertex, and if
$cn(\Upsilon_i)=cn(\Upsilon_j)$ the two clusters share a
cluster-vertex; 2)if $cn(\Upsilon_i)\not=cn(\Upsilon_j)$, there is
edge $e(\gamma_i,\gamma_j)$.

\end{definition}

Cluster-vertices($\gamma$-vertices) can have an order
according to that of their belonging clusters.
In Figure~\ref{fig-2}.$a$, the constraint-graph by constraints $cn(\Upsilon_1)\not=cn(\Upsilon_2)$ and
$cn(\Upsilon_2)\not=cn(\Upsilon_3)$ is displayed. If a more constraint $cn(\Upsilon_5)=cn(\Upsilon_6)$ is added, vertices
$\{\gamma_5,\gamma_6\}$ will be merged into a new vertex $\gamma_{5,6}$. Figure~\ref{fig-2}.$b$ is another similar example of
constraint-graph, and its corresponding constraints can be found easily.

\begin{observation}\label{structure-of-constraint-graph}
$G_{ct}\setminus G$ is a $K_2$ or $K_1$ subdivision, and $G_{ct}$ is a
book-1 graph.
\end{observation}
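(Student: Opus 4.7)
The plan is to verify the two assertions separately, both essentially by inspection of how $G_{ct}$ is built.

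For the first assertion, that $G_{ct}\setminus G$ is a $K_2$ or $K_1$ division, I would argue as follows. According to the rules of collection-constraints above, only pairs of neighbor clusters can carry a constraint. Thus in the construction of $G_{ct}$ the only edges among cluster-vertices arise from neighbor pairs $\Upsilon_i,\Upsilon_{i+1}$ with $cn(\Upsilon_i)\ne cn(\Upsilon_{i+1})$, and the only merges occur for neighbor pairs with equal collections. Each such constraint yields at most one edge between two cluster-vertices (or identifies them), so no triangle can form in $G_{ct}\setminus G$, and every maximal clique is either a single cluster-vertex or an adjacent pair, matching the $K_2$-or-$K_1$ description.

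For the second assertion, that $G_{ct}$ is wing-1, I would invoke the equivalence between wing-1 and outerplanar graphs established above. Since $G$ is wing-1, hence outerplanar, each $\Upsilon_i$ is a contiguous block of outer-vertices lying on a single arc of $G$'s outer boundary. I would place $\gamma_i$ in the outer face of $G$ next to that arc and draw its edges to $\Upsilon_i$ along the arc, which keeps the drawing planar with all vertices on the outer face. The additional edges between neighboring cluster-vertices similarly lie in the outer face and connect $\gamma$-vertices appearing consecutively along the outer boundary, so they add no crossings. Thus $G_{ct}$ is outerplanar and therefore wing-1.

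The main obstacle is the routing step for the inter-cluster-vertex edges when two neighbor clusters share a boundary vertex of $G$. At such a junction both $\gamma_i$ and $\gamma_{i+1}$ attach to the shared vertex, and the edge $\gamma_i\gamma_{i+1}$ (if present) must thread around the shared vertex without crossing the $\gamma_i$-to-$\Upsilon_i$ and $\gamma_{i+1}$-to-$\Upsilon_{i+1}$ edges. I would resolve this by placing $\gamma_i$ and $\gamma_{i+1}$ on opposite sides of the shared vertex along the straight line of the wing-1 drawing, so the inter-cluster-vertex edge can be drawn on the edge side without interference, completing the verification.
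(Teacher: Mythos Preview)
Your argument for the first assertion is correct and matches the paper's (one-line) treatment: since only neighbor clusters carry constraints, the cluster-vertices form a path after the merges, which is exactly a $K_2$/$K_1$ division.

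For the second assertion you take a genuinely different route. You try to exhibit an outerplanar embedding of $G_{ct}$ by inserting each $\gamma_i$ locally near its cluster and then invoke the wing-1/outerplanar equivalence. The paper instead uses the first assertion directly: since $G_{ct}\setminus G$ is already a path, one simply appends that entire path to one \emph{end} of $G$'s straight line (``turn around the $K_2$ or $K_1$ division to an end of $G$''). Because the clusters $\Upsilon_1,\ldots,\Upsilon_x$ appear in order along the outer-vertices and the $\gamma$-vertices are laid down in the matching order at the end, every edge $\gamma_i$--$\Upsilon_i$ becomes one of a family of nested arcs on the edge side, and each $\gamma_i\gamma_{i+1}$ edge is a short arc between consecutive line vertices; this is a wing-1 drawing on the spot, with no detour through the equivalence theorem. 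Your local-placement idea can be pushed through, but two points are left implicit: the claim that ``all vertices remain on the outer face'' relies on the non-edge side of the line staying completely empty (so every original $G$-vertex still touches the unbounded region even after the fans are added), and your last paragraph quietly switches to putting $\gamma$-vertices on the line itself, interspersed with $G$, which is a different construction from the earlier ``in the outer face'' one. The paper's end-of-line placement sidesteps all of these routing issues in one move.
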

\begin{proof}
Obviously $G_{ct}\setminus G$ can only be a $K_2$ or $K_1$ subdivision.
Turn around the $K_2$ or $K_1$ subdivision to an end of $G$ in $G_{ct}$
then the resulted graph is a book-1 graph.
\qed
\end{proof}

\begin{observation}\label{trivial-satisfy-something}
There is a color assignment of $G_{ct}$ uses $\leq 3$ colors,
if and only if there is a color assignment $cl$ of $G$ using $\leq 3$ colors and satisfying
collection-constraints.
\end{observation}
\begin{proof}
$cl_{ct}$ can be used on $G$ to be $cl$. $cl$ satisfies all
collection-constraints following definitions of
collection-constraints and constraint-graph.
\qed
\end{proof}
{\em Since $G_{ct}$ is a book-1 graph, which is known to be $3$-colorable, there always exists such a
$cl_{ct}$}.
In Figure~\ref{fig-2}, there are some examples of $G_{ct}$.

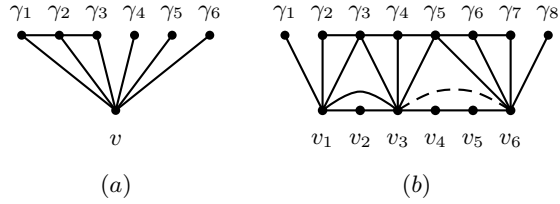
\begin{figure}
 \setlength{\unitlength}{0.5mm}
\begin{pspicture}(-2,0)(10,3.0)
    \pscircle*(1.5,1){0.06}
    \pscircle*(0.25,2){0.06}
    \pscircle*(0.75,2){0.06}
    \pscircle*(1.25,2){0.06}
    \pscircle*(1.75,2){0.06}
    \pscircle*(2.25,2){0.06}
    \pscircle*(2.75,2){0.06}
\psline(1.5,1)(0.25,2)
\psline(1.5,1)(0.75,2)
\psline(1.5,1)(1.25,2)
\psline(1.5,1)(1.75,2)
\psline(1.5,1)(2.25,2)
\psline(1.5,1)(2.75,2)
\psline(0.25,2)(1.25,2)
\rput(1.5,0.6){$v$}
\rput(0.25,2.3){$\gamma_1$}
\rput(0.75,2.3){$\gamma_2$}
\rput(1.25,2.3){$\gamma_3$}
\rput(1.75,2.3){$\gamma_4$}
\rput(2.25,2.3){$\gamma_5$}
\rput(2.75,2.3){$\gamma_6$}
\rput(1.5,0){$(a)$}
    \psline[showpoints=true](4.25,1)(4.75,1)(5.25,1)(5.75,1)(6.25,1)(6.75,1)
    \pscurve(4.25,1)(4.75,1.25)(5.25,1)
    \pscurve[linestyle=dashed](5.25,1)(5.75,1.25)(6.25,1.25)(6.75,1)
    \pscircle*(3.75,2){0.06}
    \psline[showpoints=true](4.25,2)(4.75,2)(5.25,2)(5.75,2)(6.25,2)(6.75,2)
    \pscircle*(7.25,2){0.06}
\psline(4.25,1)(3.75,2)
\psline(4.25,1)(4.25,2)
\psline(4.25,1)(4.75,2)
\psline(5.25,1)(4.75,2)
\psline(5.25,1)(5.25,2)
\psline(5.25,1)(5.75,2)
\psline(6.75,1)(5.75,2)
\psline(6.75,1)(6.25,2)
\psline(6.75,1)(6.75,2)
\psline(6.75,1)(7.25,2)
\rput(4.25,0.6){$v_1$} \rput(4.75,0.6){$v_2$}\rput(5.25,0.6){$v_3$}
\rput(5.75,0.6){$v_4$}\rput(6.25,0.6){$v_5$}\rput(6.75,0.6){$v_6$}
\rput(3.75,2.3){$\gamma_1$}
\rput(4.25,2.3){$\gamma_2$}
\rput(4.75,2.3){$\gamma_3$}
\rput(5.25,2.3){$\gamma_4$}
\rput(5.75,2.3){$\gamma_5$}
\rput(6.25,2.3){$\gamma_6$}
\rput(6.75,2.3){$\gamma_7$}
\rput(7.25,2.3){$\gamma_8$}
\rput(5.5,0){$(b)$}
 \end{pspicture}
\caption{book-1 Graph and Constraint-graph
}\label{fig-2}
\end{figure}

\begin{theorem}\label{proof-of-outerplanar-regularity}
Given a book-1 graph $G(V,E)$ and its outer-vertices $U$, $G$ has a
color assignment $cl$ using $\leq 3$ colors when it satisfies all
collection-constraints.
\end{theorem}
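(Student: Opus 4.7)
The plan is to reduce the problem to $3$-coloring the constraint-graph $G_{ct}$. First I would construct $G_{ct}$ as in Definition~\ref{extend-original-graph}. By Observation~\ref{structure-of-constraint-graph}, $G_{ct}$ is itself a wing-1 graph, so Theorem~\ref{wing-graph-color-assignment} supplies a $3$-color assignment $cl_{ct}$ of $G_{ct}$. Restricting $cl_{ct}$ to $V$ gives the desired assignment $cl$ of $G$. The remainder of the proof consists of checking that $cl$ honours all of the collection- and division-constraints.

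The collection-constraints are immediate from the construction of $G_{ct}$. Since $N(\gamma_i)=\Upsilon_i$ in $G_{ct}$, the cluster $\Upsilon_i$ avoids the color assigned to $\gamma_i$, so is colored inside the $2$-collection $CL\setminus\{color(\gamma_i)\}$; this is item (1) of Definition~\ref{collection-constraints}. Item (2) is built into the construction of $G_{ct}$: merged cluster-vertices force equal collections on neighbor clusters, and edges between distinct cluster-vertices force different collections. For division-constraints, items (3) and (4) follow directly from Observation~\ref{trivial-satisfy-something}, and item (1) --- that a division carries at most two collections --- follows from Observation~\ref{structure-of-constraint-graph}: every division consists of at most two $\gamma$-vertices, hence uses at most two distinct colors and so at most two collections.

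The main obstacle is item (2) of the division-constraints: any two neighbor divisions $D$ and $D'$ must carry inconsistent sets of collections. To handle this I would exploit the extra structural information carried by Theorem~\ref{wing-graph-color-assignment}, which guarantees that when a vertex-related set forms a $K_x$ division one may choose, for $x=3$, which vertices take the ``extra'' color, and for $x=2$, whether the independent set $I$ is empty. This freedom allows one to arrange that any pair of neighbor divisions realises different pairs of colors on their $\gamma$-vertices, which translates to inconsistent collection-sets of the form $\{CL\setminus\{c\}:c\in D\}$ versus $\{CL\setminus\{c\}:c\in D'\}$. The cleanest way to carry this out is an induction on $|V|$ mirroring the induction inside Theorem~\ref{wing-graph-color-assignment}: remove a degree-$\leq 2$ vertex of $G_{ct}$ (which exists by Lemma~\ref{wing-property}), color the smaller constraint-graph by induction, then restore the vertex using the case analysis of Observation~\ref{wing-graph-degree-2-in-one-k-3-division} to keep each pair of neighbor divisions inconsistent. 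The subtle bookkeeping is to make sure that the choice of $I$ used to preserve inconsistency at one restoration step does not conflict with a choice already fixed at a previous step; this is where a careful invariant on the coloring of the two end-vertices of each vertex-related set will be needed.
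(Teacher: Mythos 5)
Your overall strategy --- color $G_{ct}$ once via Theorem~\ref{wing-graph-color-assignment} and then verify the constraints --- is precisely the route the paper mentions and declines (``can lead to the same result\dots however, we choose to give a proof in a way that is longer but easier to generalize''). The paper instead inducts directly on $|V|$ of $G$: it peels off an end-vertex $v_n$, transfers the constraints to $G'=G\setminus v_n$ together with the added constraint $cn(\Upsilon_{N(v_n)})\not=cn(\Upsilon_i)$ for clusters properly containing $v_n$, applies the induction hypothesis, and at the restoration step explicitly decides whether $\Upsilon_{N(v_n)}$ and the adjacent division $D'$ are placed in the same or different divisions of $G'$ so that the division of clusters on $v_n$ comes out consistent or inconsistent with $D'$ as required. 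That per-step decision is what carries division-constraint (2) through the induction, and it has no counterpart in your argument.

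There are two concrete gaps in your version. First, your dismissal of division-constraint (1) rests on a misreading of Observation~\ref{structure-of-constraint-graph}: a ``$K_2$ division'' there is a \emph{subdivision} of $K_2$, i.e.\ a path of $\gamma$-vertices of arbitrary length, not a set of at most two vertices (compare the appendix proof of Theorem~\ref{wing-graph-color-assignment}, where a $K_2$ or $K_3$ division on a vertex-related set $R$ can contain all of $R$). An arbitrary $3$-coloring of $G_{ct}$ may put three distinct colors on a path $\gamma_1$--$\gamma_2$--$\gamma_3$ lying inside a single division, which gives that division three collections and violates constraint (1); so (1) must be arranged by the coloring, not merely observed. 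Second, the mechanism you offer for constraint (2) --- the freedom in choosing the independent set $I$ --- is stated in Theorem~\ref{wing-graph-color-assignment} for one vertex-related set $R$ at a time, while the divisions of $\gamma$-vertices form an ordered sequence all of whose consecutive pairs must be made inconsistent simultaneously, and these divisions need not align with vertex-related sets. Nothing in Theorem~\ref{wing-graph-color-assignment} coordinates the choices of $I$ across different sets, and the ``careful invariant on the coloring of the two end-vertices of each vertex-related set'' you defer to is exactly the substance of the claim rather than a bookkeeping detail. As written, the hardest part of the statement is postponed rather than proved.
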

\begin{proof}
We make induction on $|V|$. When $|V|=1$, suppose $V=\{v\}$. By Observation~\ref{structure-of-constraint-graph},
the graph $G_{ct}\setminus G$ can be colored by $2$ colors, so $v$ can be colored by the third color.
Then by
Observation~\ref{trivial-satisfy-something}, the conclusion holds
when $|V|=1$.

When $|V|=n$, after removing a vertex, say $v_n$, the graph
$G'=G\setminus v_n$ is a book-1 graph and $U'=\{U\setminus v_n\}\cup
\{N(v_n)\}$ are outer-vertices. As graph $G_{ct}\setminus G$ is a $k_2$ subdivision, assume $\gamma$ is the cluster-vertex
at the end of the subdivision, whose cluster is noted as $c$, and the $v_n\in c$ is the vertex which is mostly closed to
the end of the spine according to the ordering along the spine of book-1 graph. Without losing generality, assume $G\setminus v_n$ is still connected, as the disconnected condition can be analyzed similarly.

In $G'$ clusters only on $\{v_n\}$
disappear and a new $cluster$ $\Upsilon_{N(v_n)}$ on $N(v_n)$
appears whose cluster-vertex $\gamma_{N(u)}=v_n$. Define
collection-constraints on $U'$ inheriting
from $U$. And if there is $cluster$ $\Upsilon_i$ with
$\{v_n\}\subset \Upsilon_i$, then set $cn(\Upsilon_{N(v_n)})\not=
cn(\Upsilon_i)$. By doing this, the constraint-graph of $G'$ is a
induced subgraph of $G_{ct}$.

By induction, $G'$ has a $\leq 3$ color assignment $cl'$ satisfying
all constraints on $U'$.
$cl'$ can be extended to be a color assignment $cl$ for $G$ by
setting $color_{cl}(v_n)=CL\setminus cn(\Upsilon_{N(v_n)})$. Assume
$cn(\Upsilon_{N(u)})=\{1,2\}$, then $color_{cl}(v_n)=\{3\}$. So
collections of clusters on $\{v_n\}$ can be $\{1,3\}$ or $\{2,3\}$.
Hence $\gamma$ vertices of clusters on $\{v_n\}$ can be colored with
$\{2,1\}$ respectively. 
Because of the relation between constraint-graphs of $G'$ and $G$,
$cl$ can be extended to be a $\leq 3$ color assignment of $G_{ct}$. By
Observation~\ref{trivial-satisfy-something}, next we need to show
that the disappearing cluster-vertices can get their cluster coloring.
Easy to see, coloring $\{1,2\}$ is enough to coloring all those cluster-vertices.
So we can conclude.

\qed
\end{proof}

\begin{corollary}\label{cor-regularity}
A book-1 graph has a color assignment using $\leq 3$ colors and outer-vertices are assigned $\leq 2$ colors.
\end{corollary}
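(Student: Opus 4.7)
The plan is to obtain the corollary as an almost immediate consequence of Theorem~\ref{proof-of-outerplanar-regularity} by choosing the most degenerate constraint structure on the ordered outer-vertices $U$. Concretely, I will take a single cluster $\Upsilon_1 = U$. This introduces a single cluster-vertex $\gamma_1$ whose neighborhood is exactly $U$, so the constraint-graph $G_{ct}$ is just $G$ together with the added vertex $\gamma_1$. Since $G_{ct}\setminus G$ is a $K_1$ division, Observation~\ref{structure-of-constraint-graph} guarantees that $G_{ct}$ remains a wing-1 graph, so Theorem~\ref{proof-of-outerplanar-regularity} is applicable.

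Next I will verify that the collection-constraints and division-constraints are trivially satisfiable in this setup. Because there is only one cluster, the ``two neighbor clusters'' rule of Definition~\ref{collection-constraints} has nothing to enforce. On the division side, $\gamma_1$ alone forms a single $K_1$ division carrying one collection, so conditions $1$--$3$ of Definition~\ref{division-constraints} are automatic, and condition~$4$ is vacuous because no division is a $K_2$.

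Applying Theorem~\ref{proof-of-outerplanar-regularity} then yields a coloring $cl$ of $G$ using $\leq 3$ colors satisfying these (trivial) constraints; via Observation~\ref{trivial-satisfy-something}, this corresponds to a proper $\leq 3$-coloring of $G_{ct}$. In any such coloring $\gamma_1$ is assigned some single color $c\in\{1,2,3\}$, and since $\gamma_1$ is adjacent to every vertex of $U$, the cluster $\Upsilon_1 = U$ must be colored from the collection $\{1,2,3\}\setminus\{c\}$, which has exactly $2$ colors. Hence $U$ uses at most $2$ colors in $cl$, giving the corollary.

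I do not expect a substantive obstacle. The only thing that needs a careful look is confirming that the one-cluster configuration is a legitimate instance of the constraint framework; but this is immediate, as all multi-cluster comparison rules are vacuous. The entire content of the statement is absorbed into Theorem~\ref{proof-of-outerplanar-regularity}, and the corollary simply repackages that theorem in the absence of externally imposed constraints.
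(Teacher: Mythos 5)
Your proposal is correct (granting the framework of Theorem~\ref{proof-of-outerplanar-regularity}) and takes essentially the same route as the paper: the paper's own proof simply declares all outer-vertices to be a single cluster and invokes that theorem, so the cluster is colored from a single $2$-color collection. Your additional verification that the constraint system is vacuous in this degenerate case and that the cluster-vertex $\gamma_1$ forces the two-color bound is just an explicit unpacking of what the paper leaves implicit.
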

\begin{proof}
We treat all the outer-vertices as a $cluster$. 
Then by Theorem~\ref{proof-of-outerplanar-regularity}, there is a
color assignment using $\leq 3$ colors and this $cluster$ is colored
with a collection of $2$ colors.
\qed
\end{proof}

There are close inner relations among outerplanar graphs, planar
graphs, and Hadwiger conjecture. Hadwiger conjecture when $k=4$ case
states that if a graph has no $K_4$ minor, its chromatic number is
$3$~\cite{Dirac1952}, so an outerplanar graph which has no $K_4$
minor can be colored by $3$ colors.

The $k=5$ case of hadwiger conjecture states that if a graph has no
$K_5$ minor, it is $4$-colorable. An alternative way to prove it is
to prove a planar graph which has no $K_5$ minor is $4$-colorable. We
show many coloring constraints can be satisfied for coloring an
outerplanar graph in this paper. In fact, such a methodology can be
used on planar graphs
which is promising leading to four color theorem or even arbitrary
$k$ of hadiwiger conjecture.

Follow-up work has been done.
In~\cite{WeiyaNote2}, a perimeter trace in planar graph is defined
similarly as for outerplanar graph and is proved to be boundary of
an outer face. Also a similar result as
Theorem~\ref{keep-structure-after-reducing} has been proved, hence
the induction method used in proof of
Theorem~\ref{proof-of-outerplanar-regularity} can be applied for
planar graph. More interesting, the extended subgraph in
Definition~\ref{extend-original-graph} is an union of path graphs ($k_2$ subdivision) in
outerplanar graph by Observation~\ref{structure-of-constraint-graph}, while it is an outerplanar graph in planar graph.
By expanding the consistency and inconsistency to $4$ colors, mapping perimeter trace in outerplanar graph to the unbounded face
in planar graph, and the $G_{ct}\setminus G$ from $k_2$ subdivision to the outerplanar graph, we can show that a planar graph can be colored
with $4$ colors with satisfying extra constraints. More details can be found at~\cite{WeiyaNote4}.

\section{Conclusion}\label{sec-conclusion}

In this paper, we defined perimeter trace of outerplanar graph, and
proved some properties of a perimeter trace. As it is known an
outerplanar graph is equivalent with a book thickness 1 graph, we
also discussed perimeter trace in a book-1 graph. Based on book-1
graph and with the help of perimeter trace, we prove that an
outerplanar graph has a special $3$-coloring assignment satisfying
special constraints. And such a proof does not depend on the
property that an outerplanar graph has its minimum degree $\leq 2$,
such results can be generalized to show similar
properties of planar graph. The similar definition of perimeter
trace on a planar graph has been fully discussed
in~\cite{WeiyaNote2}. Generalization of constraints on planar graph is discussed
at~\cite{WeiyaNote4}.

\end{document}